\documentclass{article}
\vsize=18.5cm
\hsize=13.2cm
\oddsidemargin5pt\evensidemargin5pt\textwidth450pt\textheight570pt
\usepackage{amssymb}
\usepackage{color}
\usepackage{amsthm}
\usepackage{mhsetup}
\usepackage{mathtools}
\mathtoolsset{showonlyrefs} 
\numberwithin{equation}{section}
\newcommand{\RR}{\mathbb{R}}
\newcommand{\NN}{\mathbb{N}}
\newcommand{\ZZ}{\mathbb{Z}}
\newcommand{\dist}{\operatorname{dist}}
\newtheorem{Theorem}{Theorem}[section]

\newtheorem{Proposition}[Theorem]{Proposition}
\newtheorem{Lemma}[Theorem]{Lemma}
\theoremstyle{definition}
\newtheorem{Definition}[Theorem]{Definition}
\newtheorem{Assumption}[Theorem]{Assumption}
\theoremstyle{remark}
\newtheorem{Remark}[Theorem]{Remark}

\begin{document}
\title{Periodic solutions to relativistic Kepler problems:\\
a variational approach\thanks{Under the auspices of INdAM-GNAMPA, Italy.}
}
\author{A. Boscaggin, W. Dambrosio and D. Papini}
\date{}
\maketitle
\begin{abstract}
We study relativistic Kepler problems in the plane. At first, using non-smooth critical point theory, we show that under a general time-periodic external force of gradient type there are two infinite families of $T$-periodic solutions, parameterized by their winding number around the singularity: the first family is a sequence of local minima, while the second one comes from a mountain pass-type geometry of the action functional. Secondly, we investigate the minimality of the circular and non-circular periodic solutions of the unforced problem, via Morse index theory and level estimates of the action functional.
\end{abstract}
\medskip

\noindent
\textbf{Keywords:} relativistic Kepler problem, periodic solutions, non-smooth critical point theory, Morse index. 

\noindent
{\bf AMS Subject Classification:} 34C25, 58E05, 58E30, 70H40

\section{Introduction}

The motion of a relativistic particle in a Kepler potential can be described by the equation
\begin{equation}\label{eq:kep}
\frac{d}{dt}\left(\frac{m\dot{x}}{\sqrt{1-|\dot{x}|^{2}/c^{2}}}\right) =
-\alpha\frac{x}{|x|^{3}}, \qquad x \in \RR^{2}\setminus\{0\},
\end{equation}
where $\vert \cdot \vert$ stands for the Euclidean norm of a two-dimensional vector, $m$ is the mass of the particle, $c$ is the speed of light and $\alpha >0$ is a constant. Indeed, equation \eqref{eq:kep} is the Euler-Lagrange equation of the Lagrangian 
$$
\mathcal{L}(x,\dot x) = K(\dot x) + V(x),
$$
where 
$$
K(\dot x) = mc^2 \left( 1 - \sqrt{1-\frac{\vert \dot x \vert^2}{c^2}}\right)
$$
is the relativistic kinetic energy and 
$$
V(x) = \frac{\alpha}{\vert x \vert}
$$ 
is the usual Newtonian gravitational potential. Such a model, providing the simplest relativistic correction for the classical Kepler problem and therefore usually called relativistic Kepler problem, has been proposed and analyzed in several papers and books
of mathematical physics (see, among others, \cite{AnBa71,Bo04,DeEr85,GoPoSa02,Ji13,LeMo-pp,MuPa06} and the references therein).

The interest for problem \eqref{eq:kep} from a mathematical analysis perspective seems to be much more recent and, to the best of our knowledge, only very few references can be quoted, all of them dealing with the existence of periodic solutions for different time-dependent perturbations of \eqref{eq:kep}. More precisely, in the papers \cite{ToUrZa13, Za13} the equation 
$$
\frac{d}{dt}\left(\frac{m\dot{x}}{\sqrt{1-|\dot{x}|^{2}/c^{2}}}\right) =
-M(t)\frac{x}{|x|^{3}},
$$
where $M$ is a positive and periodic function, is considered and the existence of periodic (and quasi-periodic) solutions is proved via topological degree theory. On the other hand, in \cite{Ga19} an abstract perturbation theorem of variational nature is used to establish the existence of nearly-circular periodic solutions for the perturbed problem
\begin{equation}\label{eq:keppert}
\frac{d}{dt}\left(\frac{m\dot{x}}{\sqrt{1-|\dot{x}|^{2}/c^{2}}}\right) =
-\alpha\frac{x}{|x|^{3}} + \varepsilon \, \nabla_x U(t,x),
\end{equation}
with $U$ a periodic function in its first variable, when the parameter $\varepsilon$ is small enough. Finally, in the recent paper \cite{BoDaFe22} a higher-dimensional version of the Poincar\'e-Birkhoff fixed point theorem is applied to \eqref{eq:keppert} to ensure the existence, again for $\varepsilon$ sufficiently small, of periodic solutions bifurcating from non-circular solutions of the unperturbed problem. More precisely, according to \cite[Theorem 1.1]{BoDaFe22}, a threshold $T^* > 0$ can be given so that, whenever the period $T$ of the external potential $U$ is larger than $T^*$, equation \eqref{eq:keppert} has $T$-periodic solutions with any winding number $k \geq 2$, provided that $\vert \varepsilon \vert < \varepsilon^*(k)$. 

Especially motivated by the results in \cite{BoDaFe22}, in this paper we continue the investigation about periodic solutions of relativistic Kepler problems, in two different, yet related, directions. 
\medbreak
In the first part of the paper, we investigate the existence of $T$-periodic solutions for the forced problem
\begin{equation}\label{eq:kepleroforzato}
\frac{d}{dt}\left(\frac{m\dot{x}}{\sqrt{1-|\dot{x}|^{2}/c^{2}}}\right) =
-\alpha\frac{x}{|x|^{3}} + \nabla_{x}U(t,x), 
\end{equation}
again with $U$ a potential which is $T$-periodic in the time variable. Our result, which can be seen as a sort of non-perturbative generalization of the ones in \cite{BoDaFe22,Ga19}, reads as follows.
\begin{Theorem}\label{thmain}
Let $ U:\RR\times\RR^{2}\to\RR$ be a potential which is smooth enough and $T$-periodic with respect to the first variable.
Then, for each $k\in\ZZ$ with $k\ne 0$, there exist at least two $T$-periodic solutions of \eqref{eq:kepleroforzato} that have winding number $k$ around the origin.
\end{Theorem}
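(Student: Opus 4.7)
The plan is to obtain both families of solutions as critical points, in Szulkin's non-smooth sense, of the action functional
$$\mathcal{A}(x) = \int_{0}^{T}\bigl[K(\dot x) + V(x) + U(t,x)\bigr]\,dt,$$
defined on the space $\Lambda_{k}$ of $T$-periodic absolutely continuous loops $x:\RR\to\RR^{2}\setminus\{0\}$ with $|\dot{x}|\leq c$ a.e.\ and winding number $k$ around the origin. The functional decomposes as $\mathcal{A}=\Phi+\Psi$, with $\Phi(x)=\int_{0}^{T}U(t,x)\,dt$ of class $C^{1}$ on the ambient function space, and $\Psi(x)=\int_{0}^{T}\bigl[K(\dot x)+V(x)\bigr]\,dt$ convex, lower semicontinuous and proper (extended by $+\infty$ whenever $|\dot x|>c$ on a positive-measure set or $x$ touches the origin). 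Szulkin critical points are then exactly the weak $T$-periodic solutions of \eqref{eq:kepleroforzato}.

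For the first family, I would minimize $\mathcal{A}$ on $\Lambda_{k}$ via the direct method. The constraint $|\dot x|\leq c$ makes any minimizing sequence $(x_{n})$ equi-Lipschitz; combined with elementary estimates for $T$-periodic loops of winding $k\neq 0$ with bounded speed (which have oscillation at most $cT/2$ and must come within $cT/(2\pi |k|)$ of the origin), one obtains a uniform $L^{\infty}$ bound, so Arzelà--Ascoli produces a candidate minimizer $\bar x_{k}$. The crucial step is excluding collisions in the limit: if $\rho_{n}:=\min_{t}|x_{n}(t)|\to 0$, with $t_{n}^{\ast}$ the corresponding instant, the Lipschitz bound gives $|x_{n}(t)|\leq \rho_{n}+c|t-t_{n}^{\ast}|$ for all $t$, whence
$$\int_{0}^{T}V(x_{n})\,dt \;\geq\; \alpha\int_{0}^{T}\frac{dt}{\rho_{n}+c|t-t_{n}^{\ast}|}\;\longrightarrow\;+\infty,$$
contradicting $\mathcal{A}(x_{n})\to\inf_{\Lambda_{k}}\mathcal{A}<+\infty$. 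Thus the relativistic speed cap plays the role of a Gordon-type strong-force condition: $\bar x_{k}$ is non-colliding, and by lower semicontinuity of $\Psi$ it realizes the infimum, hence is a critical point.

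For the second family, I would apply a non-smooth mountain pass theorem of Szulkin to $\mathcal{A}$ using $\bar x_{k}$ as the base point. The required geometry demands exhibiting a point $\tilde x\in\Lambda_{k}$ outside a neighborhood of $\bar x_{k}$ with $\mathcal{A}(\tilde x)\leq \mathcal{A}(\bar x_{k})$, so that strict local minimality of $\bar x_{k}$ yields a positive action barrier separating them; such a $\tilde x$ is typically obtained by a suitable deformation of $\bar x_{k}$ within $\Lambda_{k}$. The Palais--Smale condition at the minimax level is then verified via the same compactness-plus-collision-exclusion argument, now applied to Palais--Smale sequences, so the Szulkin mountain pass produces a second critical point at a level strictly above $\mathcal{A}(\bar x_{k})$, and hence geometrically distinct from the minimum.

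The principal obstacle is the interplay between the non-smoothness of $\Psi$ and the singularity of $V$. At loops where $|\dot x|=c$ on a positive-measure set, classical Euler--Lagrange arguments do not apply, and one must work with the subdifferential of the convex part of $\Psi$ to recover the strong form of \eqref{eq:kepleroforzato}; likewise, verifying that Palais--Smale sequences stay a definite distance both from the origin and from the relativistic speed limit is the delicate ingredient that supports the compactness arguments in this non-smooth framework.
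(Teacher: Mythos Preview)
Your minimization argument for the first solution is essentially the one in the paper, and your collision-exclusion via the Lipschitz bound is exactly right. But there are two genuine gaps in the rest of the proposal.

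\textbf{The decomposition is wrong.} You place the Kepler potential $V(x)=\alpha/|x|$ in the convex part $\Psi$, but $x\mapsto 1/|x|$ is \emph{not} convex on $\RR^{2}\setminus\{0\}$: its Hessian has a negative eigenvalue in the tangential direction. Hence $\Psi$ as you define it is not convex, and Szulkin's framework does not apply to this splitting. The paper instead takes $\psi(x)=\int_{0}^{T}K(\dot x)\,dt$ (convex, l.s.c.) and puts \emph{both} $V$ and $U$ into the smooth part $\Phi$, which is $C^{1}$ on the open set $\Lambda=\{x:\min_{t}|x(t)|>0\}$. The singularity is then handled not by convexity but by the blow-up property $I(x_n)\to+\infty$ whenever $\dist(x_n,\partial\Lambda)\to 0$, which is promoted to a structural assumption on the functional.

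\textbf{The mountain-pass geometry is not there.} Since $\bar x_{k}$ is the \emph{global} minimum of $\mathcal{A}$ on $\Lambda_{k}$, there is no $\tilde x\in\Lambda_{k}$ with $\mathcal{A}(\tilde x)<\mathcal{A}(\bar x_{k})$, and equality (which does occur, e.g.\ in the autonomous case along the orbit $s\mapsto\bar x_{k}(\cdot+sT)$) does not give a mountain-pass structure; nor can you assume $\bar x_{k}$ is a strict local minimum. The paper's second solution comes from a different topological mechanism: one min-maxes over the family of \emph{non-nullhomotopic loops} in $\Lambda_{k}$ based at $\bar x_{k}$, using a non-smooth Ghoussoub-type principle (extended to allow $\Phi$ to be defined only on an open set). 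The dual set $F$ is a small $L^{\infty}$-sphere around $\bar x_{k}$, and the localization statement in Ghoussoub's principle is precisely what separates the min-max critical point from $\bar x_{k}$ when the two levels coincide. A further subtlety you gloss over: the Palais--Smale condition fails in the $W^{1,\infty}$ topology, and the paper only recovers compactness of PS-sequences in the uniform topology, which is why the Ghoussoub localization (giving $\dist(x_n,F)\to 0$) is essential rather than cosmetic.
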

In particular, problem \eqref{eq:kepleroforzato} has infinitely many $T$-periodic solutions: such a conclusion is new also in a perturbative setting, since it does not follow from the results in \cite{BoDaFe22,Ga19}. The precise smoothness assumption on $U$ will be given later, see Assumption \ref{ass:U}; notice however that no other hypoheses on the external potential $U$ (and on its period $T$) are needed. We also mention that more general equations, like for instance a forced relativistic $N$-center problem, could be considered, see Remark \ref{Ncentri}.

The proof of Theorem \ref{thmain} is of variational nature, relying on the fact that problem \eqref{eq:kepleroforzato} is, formally, the Euler-Lagrange equation of the action functional
\[
I(x) = \int_0^T \mathcal{L}(x,\dot x)\,dt + \int_0^T U(t,x) \,dt,
\]
where $\mathcal{L}(x,\dot x) = K(\dot x) + V(x)$ is, as already observed, the Lagrangian of the (unforced) relativistic Kepler problem.
More precisely, by following a well-established point of view since the pioneering paper by Gordon \cite{Go77}, one of the $T$-periodic solutions of Theorem \ref{thmain} is found by minimizing the functional $I$ on the class of $T$-periodic loops with winding number equal to $k$; the second solution, on the other hand, is provided by a min-max argument, relying again on the topology of the punctured plane.

To rigorously implement this strategy, two main difficulties have to be faced. 
The first one, of course, is due to the singular nature of the Kepler potential $V(x) = \alpha/\vert x \vert$, which in principle can lead, as it is typical in problems of celestial mechanics, to the occurrence of collisions for a minimum point of the functional $I$.
The second difficulty comes from the non-differentiability of the kinetic part of the relativistic Keplerian lagrangian, $K(\dot x) = -mc^2 (1-\vert \dot x \vert^2/c^2)^{1/2}$, making the functional $I$ not smooth on any natural space of functions and the usual critical point theory not directly applicable. To overcome these difficulties, we borrow some results from the recent paper \cite{ArBeTo20}, where 
a variational formulation is provided for the Lorentz force equation
\begin{equation}\label{lorentzintro}
\frac{d}{dt}\left(\frac{\dot{x}}{\sqrt{1-|\dot{x}|^{2}}}\right) =
E(t,x) + \dot x \wedge B(t,x), \qquad x\in \mathbb{R}^3,
\end{equation}
with $E$ and $B$ smooth functions having the role of electric and magnetic fields, respectively. The key point for this variational formulation is the choice of the Sobolev space $W^{1,\infty}_{T}$ of  Lipschitz continuous and $T$-periodic functions as the domain for the associated action functional, allowing for the use of Skzulin's version \cite{Sz86} of non-smooth critical point theory. As carefully explained in the introduction of \cite{ArBeTo20}, the choice of this functional space is forced by the presence of the term $\dot x \wedge B$ in equation \eqref{lorentzintro}.
However, quite surprisingly, it turns out to be very convenient also when applied to equation \eqref{eq:kepleroforzato}, which corresponds (after normalization of constants) to \eqref{lorentzintro} for $B \equiv 0$ but, on the other hand, presents a singularity in the term $E(t,x) = - \alpha x / \vert x \vert^3 + \nabla_x U(t,x)$ (the fact that $x$ is two or three-dimensional does not play a role).   
Indeed, if $x \in W^{1,\infty}_T$, the Keplerian term
$$
\int_0^T V(x) \,dt = \int_0^T \frac{\alpha}{\vert x \vert} \,dt
$$ 
of the relativistic action functional is finite if and only if $x$ never vanishes; moreover, with some more care, the behavior of the functional $I$ along sequences $\{x_n\} \subset W^{1,\infty}_T$ approaching the singularity can be successfully controlled: exclusion of collisions thus turns out to be much simpler than in the non-relativistic regime, cf. \cite{BoDaPa20}. 

Using these ideas, the existence of the minimal solution is easily given. Obtaining the second solution requires a considerable amount of extra-work. Indeed, the natural min-max principle to be used is a non-smooth version of the celebrated Ghossoub's min-max principle \cite{Gh93,GhPr89}, which however, to the best of our knowledge, has been established only when the smooth part of the functional is globally defined (see \cite{LiMa04}). We thus provide an extension of this result which is suited for our more general setting, see Theorem \ref{thm:minmax}; we also refer to the discussion after the statement for more comments.
A second difficulty comes from the fact that, as already observed in \cite{ArBeTo20}, the usual Palais-Smale condition is not available for the action functional in the space $W^{1,\infty}_{T}$. Indeed, our Palais-Smale sequences are precompact only with respect to the (weaker) $L^\infty$ topology and, as a consequence, a careful argument has to be used when the min-max level coincides with the minimal level.

\medbreak
The second part of the paper is again of variational nature, but it is concerned with the unforced problem \eqref{eq:kep}.
Of course, Theorem \ref{thmain} can be applied also in this situation (corresponding to $U \equiv 0$ of problem \eqref{eq:kepleroforzato}), thus providing, for any integer $k \neq 0$, a $T$-periodic solution to  \eqref{eq:kep}, obtained as a minimum point for the action functional $I$ among the $T$-periodic loops with winding number $k$ around the origin (incidentally, notice that here, due to the autonomous nature of the equation, the second solution is nothing but a time-translation of this minimal solution, see Remark \ref{rem:continuo}). On the other hand, as shown in \cite{BoDaFe22}, problem \eqref{eq:kep} has both circular and non-circular periodic solutions, and a complete picture for their existence and classification (depending on $T$ and $k$) can be provided: thus, it is natural to wonder which one of these solutions turns out to be the minimal one provided by Theorem \ref{thmain}.  

The complete answer to this question is provided by Theorem \ref{teo:minimalita}; here, we limit ourserlves to summarize the main conclusion in the following simplified form (we assume $k > 0$, since negative values of $k$ give rise to the same orbits, just run in the clockwise sense).

\begin{Theorem}\label{thmain2}
Let $k \geq 1$ be an integer. Then, the $T$-periodic solution of problem \eqref{eq:kep} minimizing the action functional 
(in the class of $T$-periodic loops with winding number $k$ around the origin) is the (unique, up to time-translations) circular periodic solution of minimal period $T/k$ whenever no other $T$-periodic solutions with winding number $k$ exist; otherwise, it is a non-circular solution and, precisely, is the (unique, up to time-translations and space-rotations) non-circular periodic solution with winding number $k$ and such that 
$\vert x \vert$ has minimal period equal to $T$.
\end{Theorem}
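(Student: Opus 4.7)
My plan is to identify the minimum by enumerating the critical points of $I$ in the winding-$k$ class and comparing their action values. By Theorem \ref{thmain} a minimizer exists and must be a $T$-periodic solution of \eqref{eq:kep}; invoking the qualitative analysis of \cite{BoDaFe22}, all such solutions with winding number $k$ can be listed, up to time-translation and planar rotation, as follows: the unique circular solution of minimal period $T/k$, together with, for each integer $n\ge 1$ satisfying a precise compatibility condition relating $T$ and $k/n$ to the admissible range of apsidal angles of the relativistic Kepler orbit, a unique non-circular $T$-periodic solution $x_n$ whose radial coordinate $|x|$ has minimal period $T/n$ and whose orbit precesses by $2\pi k/n$ per radial period. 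All of these solutions lie in the smooth regime $|\dot x|<c$ and $|x|>0$, so the action functional $I$ is classically $C^2$ in a $W^{1,\infty}_T$-neighborhood of each of them.

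Exploiting the autonomy of \eqref{eq:kep} together with the conservation of energy $E$ and angular momentum $L$, the Legendre transform gives
\[
I(x) = \int_0^T \bigl(p\cdot\dot x - E\bigr)\,dt = n\oint p_r\, dr + 2\pi k\, L - E\, T,
\]
where $p = m\dot x/\sqrt{1-|\dot x|^2/c^2}$ and the radial contour integral is taken over one period of $|x|$ (with the convention that it vanishes for the circular orbit). The scalars $(n,L,E)$ are uniquely determined by the compatibility condition above, so this yields an explicit formula for the action at each critical point as a function of $n$ only (with $T$ and $k$ fixed). The two inequalities that then need to be established are $I(x_1)<I_{\text{circ}}$ whenever $x_1$ exists, and $I(x_n)<I(x_{n+1})$ for every admissible $n$. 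The first is obtained via bifurcation analysis: each family $\{x_n\}$ emanates from the circular orbit exactly when the Morse index of the latter in the winding-$k$ loop space jumps, so a standard local reduction near the bifurcation shows that the newly born critical point carries strictly smaller action. The second follows from a monotonicity calculation, intuitively justified by the observation that, at fixed total winding $k$ and total time $T$, fewer large radial loops cost less relativistic kinetic energy than many small ones.

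The chief obstacle is the quantitative comparison carried out in the last step. The bifurcation argument only gives $I(x_1)<I_{\text{circ}}$ \emph{near} the bifurcation threshold, and Morse-index bookkeeping only rules out $x_n$ with $n\ge 2$ as \emph{local} minima; the strict global ordering $I(x_1)<I(x_2)<\cdots$ requires a genuine monotonicity property of the explicit action formula above. I would establish this by differentiating the formula with respect to the radial period $\tau = T/n$ along the one-parameter family of admissible orbits and checking the sign of $dI/d\tau$, a computation that ultimately rests on fine properties of the radial effective potential of the relativistic Kepler problem.
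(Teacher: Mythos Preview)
Your overall architecture---enumerate the $T$-periodic solutions with winding number $k$, compute their actions, and compare---is exactly the paper's, and your monotonicity step $I(x_n)<I(x_{n+1})$ matches Proposition~\ref{prop:azionecrescente} (the paper obtains the explicit formula~\eqref{eq:livrosetta} by direct integration rather than via the Legendre identity you write, but the two routes are equivalent).

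The genuine gap is in the step $I(x_1)<I_{\text{circ}}$. You propose a bifurcation argument, and you yourself flag that this only works near the threshold $T=u_1^k$; you then suggest patching this with a monotonicity computation in $\tau=T/n$, but you do not carry it out, and it is not clear that such a computation would cleanly separate $I(x_1)$ from $I_{\text{circ}}$ for \emph{all} $T>u_1^k$. The paper explicitly confronts this difficulty: in the paragraph closing Section~\ref{subsec:solexistence} and again in the long Remark after Theorem~\ref{teo:minimalita}, it says that a direct comparison of $I(x_C)$ with the $I_n^k$ is ``not evident'' for general $T$, and the asymptotic expansion carried out there only yields $I(x_C)>I_{k-1}^k>\cdots>I_1^k$ for $T$ large.

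The paper's way around this is different from yours and cleaner: it never compares $I(x_C)$ with $I(x_1)$ directly. Instead, Theorem~\ref{teo:morse} computes the Morse index of the circular solution via the Conley--Zehnder index of the linearized Hamiltonian flow, obtaining $\iota(x_C)=2i_T$. Thus whenever any non-circular solution exists ($i_T\ge 1$), the circular orbit has Morse index at least $2$ and is therefore not even a local minimum (Remark~\ref{rem:morsenozero}); the minimizer must then lie among the $x_n$, and Proposition~\ref{prop:azionecrescente} singles out $x_1$. So the Morse index is used on $x_C$, not on the $x_n$ as your sketch suggests, and it delivers a global---not merely local---exclusion of the circular orbit in one stroke.
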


To better understand this statement, it can be useful to say (see Proposition~\ref{pro:circolari} and Proposition~\ref{pro:rosette}) that problem \eqref{eq:kep} possesses circular periodic solutions of any minimal period (and, thus, circular $T$-periodic solutions with winding number $k$ for any $T> 0$ and $k \geq 1$) and non-circular periodic solutions with winding number $k \geq 2$ if and only if $T > T^*$, with $T^* > 0$ the value found in \cite{BoDaFe22} already mentioned at the beginning of this introduction. Thus, the minimal solution is circular if and only if $k = 1$ or $k \geq 2$ and $T \leq T^*$, and non-circular otherwise. Moreover, since non-circular $T$-periodic solutions can be further distinguished via the minimal period of their radial component $r = \vert x \vert$ (which in general can be a submultiple of $T$), Theorem \ref{thmain2} further precises that the minimal non-circular solution is the one for which the minimal period of $r$ is exactly $T$.

The proof of Theorem \ref{thmain2} relies on the following strategy. At first (see Theorem \ref{teo:morse}) an explicit formula for the
Morse index of the circular solution is provided: such a formula is found by first computing the Conley-Zehnder index of the associated Hamiltonian system (as similarly done in \cite{KaOfPo21} for the classical Keplerian orbits) and then using the Morse index theorem, in the version for general Lagrangian systems given in \cite{Abb03}. As a consequence of this analysis, the Morse index of the circular solution is zero only when $k = 1$ or $k \geq 2$ and $T \leq T^*$. In these cases, since there are no non-circular solutions, the circular one is minimal.
On the other hand, for $k \geq 2$ and $T > T^*$ the minimal solution has to be non-circular. To establish which one of the non-circular solutions is the minimal one (recall that, in general, multiple non-circular solutions with winding number $k$ exist, being distinguished by the minimal period of their radial component $\vert x \vert$), a direct action level comparison is performed (see Proposition \ref{prop:azionecrescente}), implying that the solution with lowest action is the one such that $\vert x \vert$ has minimal period equal to $T$. It seems natural to conjecture that the Morse index of the non-minimal non-circular periodic solutions with winding number $k$ is related to the minimal period of their radial component, but a proof of this fact seems to be hard. 

The plan of the paper is the following. In Section \ref{sec2} the abstract variational setting is presented and the statement of the min-max principle is given: its proof, which is closely adapted from the one in \cite{LiMa04}, is developed in the Appendix. Then, in Section \ref{sec3} the proof of Theorem \ref{thmain} is provided. Finally, in Section \ref{sec4} we describe the periodic solutions of the unforced problem \eqref{eq:kep} and we give the proof of Theorem \ref{thmain2}.
\section{The abstract variational setting}\label{sec2}
Let $X$ be a Banach space. According to the seminal paper by Szulkin \cite{Sz86}, we are going to consider functionals $I: X \to (-\infty,+\infty]$ that are the sum of a convex functional $\psi$ and a smooth one $\Phi$; however, due to the presence of a singularity in equation \eqref{eq:kepleroforzato}, we will allow the functional $\Phi$ to be singular in a suitable sense. Even if assuming $\Phi$ of class $C^1$ would be enough for our application, for the sake of generality throughout this section we just suppose that $\Phi$ is locally Lipschitz continuous, as in the theory developed by Chang \cite{Chang81}. Summing up, we introduce the following basic set of assumptions.
\begin{Assumption}\label{ass:I}
$ I:X\to (-\infty,+\infty] $ is a functional which can be decomposed as
\[
I(x)=\psi(x)+\Phi(x), \quad \forall \ x\in X,
\]
where, denoting by $D_{\psi}=\{x\in X: \psi(x)<+\infty\}$ and $D_{\Phi}=\{x\in X: \Phi(x)<+\infty\}$,
\begin{enumerate}
\item
$D_{\Phi}$ is open in $X$ and $D_{I}=D_{\Phi}\cap D_{\psi}\ne\emptyset;$
\item
$\psi:X\to\RR\cup\{+\infty\} $ is convex and lower semi-continuous;
moreover, $\psi$ is continuous on any nonempty compact set $A\subset X$ such that $ \sup_{A} \psi $
is finite;
\item
$\Phi:X\to\RR\cup\{+\infty\} $ is locally Lipschitz continuous in $D_{\Phi}$, i.e. every $x\in D_{\Phi}$
has a neighbourhood $V_{x}\subset D_{\Phi}$ in which $\Phi$ is Lipschitz continuous;
\item for any sequence $\{x_n\}$ in $D_{I}=D_{\Phi} \cap D_{\psi}$ such that $\dist(x_n, \partial D_{\Phi}) \to 0$, it holds that
$I(x_n) \to +\infty$.
\end{enumerate}
The last condition, controling the behavior of the functional near the boundary of the set $D_{\Phi}$, is the crucial one to use tools from critical point theory also in the case $D_{\Phi} \neq X$. We also observe that, by the convexity of $\psi$, $ D_{\psi}$ is a convex set.
\end{Assumption}

\noindent
For functionals that satisfy a Lipschitz condition near a
point $ x\in D_{\Phi} $, like $ \Phi$ in Assumption~\ref{ass:I}, it is possible (see \cite[\S 2.1]{Clarke90}) to define a directional derivative $\Phi^{0}(x,u)$ at $x$
with respect to any direction $u\in X$ by setting
\[
\Phi^{0}(x;u) = \limsup_{ w\to x,t\to 0^{+}} \frac{\Phi(w+tu)-\Phi(w)}{t}.
\]
Of course, if $\Phi$ is of class $C^1$, then
\[
\Phi^{0}(x;u) = \Phi'(x)[u],\quad \forall \ x\in D_{\Phi},\ u\in X.
\]
We now recall some basic definitions from \cite[\S 3.2]{MoPa99}.
\begin{Definition}\label{def:puntocritico}
Let $ I : X\to (-\infty,+\infty] $ satisfy Assumption~\ref{ass:I}.
\begin{enumerate}
\item
A point $x \in D_{I}$ is a critical point of $I$ if
\[
\Phi^{0}(x;z-x) + \psi(z) - \psi(x) \ge 0, \quad \forall \ z \in X,
\]
\item
A \emph{Palais-Smale} (abbreviated \emph{PS-}) \emph{sequence for $I$ at level $c$}
is a sequence $\{x_{n}\}$ in $X$ such that $I(x_{n})\to c $ and
\[
\Phi^{0}(x_{n};z-x_{n})+\psi(z)-\psi(x_{n}) \ge-\epsilon_{n}\|z-x_{n}\|, \quad \forall \ n\in\NN
\text{ and } z\in X,
\]
for some sequence $\epsilon_{n}\to 0^{+}$.
\end{enumerate}
\end{Definition}
\begin{Proposition}\label{pro:minimoPScritico}
Let $ I : X\to (-\infty,+\infty] $ satisfy Assumption~\ref{ass:I}.
Then:
\begin{enumerate}
\item 
if $x\in D_{I}$ is a local minimum of $I$, then $x$ is a critical point of $I$;
\item
if $x\in D_{I}$ is a limit point of a PS-sequence for $I$ at level $c\in\RR$, then $x$ is a critical
point of $I$ and $I(x)=c$.
\end{enumerate}
\end{Proposition}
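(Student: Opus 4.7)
For part (1), I would exploit the convexity of $\psi$ along the segment from $x$ to $z$. Since $x$ is a local minimum of $I$ and $D_{\Phi}$ is open, for each $z \in X$ we have $x + t(z-x) \in D_{\Phi}$ and $I(x + t(z-x)) \geq I(x)$ for all sufficiently small $t > 0$. Convexity of $\psi$ gives $\psi(x + t(z-x)) - \psi(x) \leq t[\psi(z) - \psi(x)]$, so rearranging $I(x+t(z-x)) - I(x) \geq 0$ yields
\[
\frac{\Phi(x + t(z-x)) - \Phi(x)}{t} \geq \psi(x) - \psi(z).
\]
Passing to the $\limsup$ as $t \to 0^{+}$, the left-hand side is bounded above by $\Phi^{0}(x; z-x)$, which produces the critical point condition.

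For part (2), suppose $x_{n} \to x$ along a subsequence. The first step is to show $x \in D_{I}$. Assumption~\ref{ass:I}(4) rules out $x \in \partial D_{\Phi}$, since otherwise $\dist(x_{n}, \partial D_{\Phi}) \to 0$ would force $I(x_{n}) \to +\infty$, contradicting $I(x_{n}) \to c$. Hence $x \in D_{\Phi}$ and $\Phi$ is continuous at $x$; together with the lower semicontinuity of $\psi$ and the boundedness of $\psi(x_{n}) = I(x_{n}) - \Phi(x_{n})$, this yields $\psi(x) < +\infty$, so $x \in D_{\psi}$.

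To obtain the critical point inequality, I would pass to the $\limsup$ in the PS estimate $\Phi^{0}(x_{n}; z - x_{n}) + \psi(z) - \psi(x_{n}) \geq -\epsilon_{n}\|z - x_{n}\|$. The right-hand side tends to $0$ (since $\|z - x_{n}\|$ is bounded), the term $-\psi(x_{n})$ is dominated by $-\psi(x)$ via the lower semicontinuity of $\psi$, and the joint upper semicontinuity of Clarke's directional derivative gives $\limsup_{n} \Phi^{0}(x_{n}; z - x_{n}) \leq \Phi^{0}(x; z - x)$. To verify $I(x) = c$, I would then test the PS inequality with the specific choice $z = x$: this yields
\[
\limsup_{n} \psi(x_{n}) \leq \psi(x) + \limsup_{n} \Phi^{0}(x_{n}; x - x_{n}) \leq \psi(x) + \Phi^{0}(x; 0) = \psi(x),
\]
which matches the lower-semicontinuity bound, so $\psi(x_{n}) \to \psi(x)$. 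Combined with $\Phi(x_{n}) \to \Phi(x)$, this gives $I(x) = \lim_{n} I(x_{n}) = c$.

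The delicate points are the joint upper semicontinuity of $\Phi^{0}(\cdot;\cdot)$ (a standard property of Clarke's calculus, valid under the local Lipschitz hypothesis on $\Phi$) and the identity $\Phi^{0}(x;0)=0$, together with the invocation of Assumption~\ref{ass:I}(4) to prevent the limit point from lying on $\partial D_{\Phi}$. The latter is the one substantive novelty with respect to the globally defined case treated in \cite{Sz86,MoPa99}: it is precisely the condition introduced to accommodate the singular functional $\Phi$ arising from the Kepler potential.
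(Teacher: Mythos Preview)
Your proof is correct and follows essentially the same route as the paper's: for part~(1) both arguments exploit the convexity of $\psi$ along the segment $[x,z]$ and then dominate the difference quotient of $\Phi$ by $\Phi^{0}(x;z-x)$; for part~(2) both use Assumption~\ref{ass:I}.4 to exclude $x\in\partial D_{\Phi}$, the joint upper semicontinuity of $\Phi^{0}$ together with the lower semicontinuity of $\psi$ to pass to the limit in the PS inequality, and the test $z=x$ (combined with $\Phi^{0}(x;0)=0$) to upgrade $\liminf\psi(x_{n})\ge\psi(x)$ to full convergence. Your write-up is in fact slightly more explicit than the paper's in verifying $x\in D_{\psi}$ and in isolating the role of $\Phi^{0}(x;0)=0$.
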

\begin{proof}
1. If $x\in D_{I}$ is a local minimum of $I$, the convexity of $\psi$ implies that
\begin{align*}
0 &\le I(x+t(z-x))-I(x) \\
& \le \Phi(x+t(z-x))-\Phi(x)+(1-t)\psi(x)+t\psi(z)-\psi(x) \\
& = \Phi(x+t(z-x))-\Phi(x)+t(\psi(z)-\psi(x))
\end{align*}
for all $z\in X$ and all positive $t$ which are small enough.
Hence the thesis follows by dividing by $t$ and recalling Definition~\ref{def:puntocritico}.

2. Without loss of generality, we may assume that $ \{x_{n}\} $ is a PS-sequence for $I$ at level $c$ such that
$ x_{n} \to x \in X $.
Assumption~\ref{ass:I}.4 and the fact that $ I(x_{n}) \to c $ grant that
$ x \not\in \partial D_{\Phi} $ and, thus, $ x \in D_{\Phi}$.
Since $ \Phi^{0} $ is upper semi-continuous (see \cite[Proposition~2.1.1]{Clarke90}) and $ -\psi $ is upper semi-continuous, we have
\[
\Phi^{0}(x;z-x) + \psi(z) - \psi(x) \ge \limsup_{n\to\infty} \left[\Phi^{0}(x_{n};z-x_{n}) + \psi(z) - \psi(x_{n}) \right]
\ge 0, \quad \forall \ z \in X.
\]
Choosing $ z=x $ in Definition~\ref{def:puntocritico}.2, we obtain
\[
\psi(x) \le \liminf_{n\to\infty} \psi(x_{n}) \le \limsup_{n\to\infty} \psi(x_{n})
\le \Phi^{0}(x_{n},x-x_{n})+\psi(x)+\epsilon_{n}\|x-x_{n}\| = \psi(x)
\]
and, hence, $I$ is continuous at $x$.
\end{proof}

\noindent
In our application of Section \ref{sec3}, we will find two families of critical points for $I$: the first one will be made by local minima.
In order to find the second one, we will use the following non-smooth version of the celebrated Ghossoub's min-max principle \cite{Gh93,GhPr89}.
\begin{Theorem}\label{thm:minmax}
Let $I=\psi+\Phi$ be a functional satisfying Assumption~\ref{ass:I}, let $B$ be a closed set in $X$ and $\mathcal{F}$ be a family of compact sets in $X$ such that:
\begin{enumerate}
\item 
$\mathcal{F}$ is \emph{homotopy stable with extended boundary} $B$, that is, for each
$ A \in \mathcal{F} $ and each continuous deformation $ \eta\in C^{0}([0,1]\times X, X) $ such that
$$ \eta(t,x)=x, \quad \forall \ (t,x)\in (\{0\}\times X)\cup ([0,1]\times B)  \qquad \text{ and } \qquad
\eta([0,1]\times A)\subset D_{\Phi},$$ 
one has that
$ \eta(\{1\}\times A)\in\mathcal{F} $;
\item
$ c \vcentcolon= \adjustlimits\inf_{A\in\mathcal{F}}\sup_{x\in A} I(x) < +\infty $;
\item
there exists a closed set $F$ in $X$ such that
\[
(A\cap F)\setminus B \ne \emptyset, \quad \forall \ A\in\mathcal{F} \qquad\text{and}\qquad
\sup_{B} I \le \inf_{F} I;
\]
\end{enumerate}
Then, for any sequence $\{A_{n}\}$ in $\mathcal{F}$ such that
$\adjustlimits \lim_{n\to\infty}\sup_{A_{n}} I = c$, there exists a PS-sequence $\{x_{n}\}\subset X$
at level $c$ such that $ \dist(x_{n},A_{n})\to 0 $.
If moreover $ \inf_F I = c $, then also $\dist(x_{n},F)\to 0$.
\end{Theorem}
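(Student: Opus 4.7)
The plan is to adapt the classical proof of Ghoussoub's min-max principle, in its non-smooth Li--Majer version \cite{LiMa04}, to the present setting, where the new feature is that the effective domain $D_\Phi$ of $\Phi$ is only assumed to be open in $X$. I would argue by contradiction: assuming that no PS-sequence $\{x_n\}$ with $\dist(x_n,A_n)\to 0$ exists at level $c$, I would construct a continuous deformation $\eta$ that drops $\sup_{A_n} I$ strictly below $c$, contradicting the minimax definition of $c$.

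The key preliminary observation is that Assumption~\ref{ass:I}.4 yields a barrier against $\partial D_\Phi$. Specifically, for any $M>0$, the sublevel set $\{I\le M\}$ lies at positive distance from $\partial D_\Phi$. Applied with $M=c+1$, this gives $\delta_0>0$ such that $\{x\in D_I:I(x)\le c+1\}\subset U_{\delta_0}\vcentcolon=\{x\in X:\dist(x,\partial D_\Phi)>\delta_0\}$, an open set whose closure sits inside $D_\Phi$. Consequently, any PS-sequence at level $c$ is eventually contained in $U_{\delta_0}$, and all the subsequent analysis (in particular, every deformation) can be arranged to take place in $U_{\delta_0}$, where $\Phi$ is uniformly locally Lipschitz continuous.

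Under the contradiction hypothesis, on a suitable open neighborhood $N$ of the relevant tail of $\{A_n\}\setminus B$ intersected with $\{c-\epsilon\le I\le c+\epsilon\}$, one gets, for every $x\in N\cap D_I$, a uniformly negative upper bound for $\inf_{\|v\|\le 1}\bigl[\Phi^{0}(x;v)+\psi(x+v)-\psi(x)\bigr]$. Following Szulkin/Chang (\cite{Sz86,Chang81}, and the descent-direction construction in \cite{MoPa99}), together with the upper semi-continuity of $\Phi^{0}$ and the convexity/lower semi-continuity of $\psi$, I would assemble, via a locally finite partition of unity, a locally Lipschitz pseudo-gradient vector field on $N$, and integrate it to a descent flow $\eta\in C^{0}([0,1]\times X,X)$ that is the identity outside a bounded neighborhood of $A_n$ and on $B$. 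Because the flow is forced to remain inside $\{I\le c+1\}\subset U_{\delta_0}\subset D_\Phi$, condition~1 applies and $\eta(\{1\}\times A_n)\in\mathcal{F}$.

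The contradiction then follows the standard pattern: for $n$ large, $\sup_{A_n}I<c+\epsilon$, and following $\eta$ for unit time drops this supremum below $c$, violating $c=\inf_{\mathcal{F}}\sup I$. For the refinement when $\inf_F I=c$, I would additionally cut the pseudo-gradient vector field off in a neighborhood of $F$ where $I$ is close to $c$; thanks to condition~3 (which ensures $F$ does not sit below level $c$ near $B$), the cut-off does not spoil the global descent, and the only way to avoid a net decrease of $\sup_{A_n}I$ is that the $\epsilon$-critical points approximating the sup accumulate near $F$, yielding the desired PS-sequence with $\dist(x_n,F)\to 0$. The main technical obstacle, and the genuine novelty compared to \cite{LiMa04}, is to verify that every step of the deformation machinery is compatible with the open domain $D_\Phi$; this is exactly what the distance buffer $\delta_0$ supplied by Assumption~\ref{ass:I}.4 is designed to guarantee.
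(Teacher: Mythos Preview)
Your proposal correctly identifies the one genuinely new ingredient relative to \cite{LiMa04}: the barrier coming from Assumption~\ref{ass:I}.4, which gives $\delta_0>0$ such that the sublevel $\{I\le c+1\}$ stays at distance $\ge\delta_0$ from $\partial D_\Phi$, so that all relevant deformations can be confined to $D_\Phi$ and hypothesis~1 applies. That is indeed the heart of the adaptation.

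The paper's actual route, however, differs in detail from what you sketch. Rather than building a global pseudo-gradient field and integrating a descent flow, the paper follows \cite{LiMa04} by applying Ekeland's variational principle to a functional $J(\eta)=\sup_{\eta(1,A_\varepsilon)} g$ defined on the complete metric space $\mathcal{L}_\varepsilon$ of admissible deformations $\eta$ satisfying in particular $\rho(\eta,\mathrm{id})\le\delta_0$; that last constraint is precisely what forces $\eta([0,1]\times A_\varepsilon)\subset D_\Phi$. The contradiction is then obtained from a \emph{local} deformation lemma (Theorem~\ref{thm:deformation}), in which the descent map is an affine interpolation $x\mapsto x+t\sum\varphi_i(x)(\xi_{x_i}-x)$ toward the descent targets $\xi_x$, not the solution of an ODE. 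For the case $\inf_F I=c$, the localisation near $F$ is achieved by adding two Lipschitz penalty terms $f_1,f_2$ (distance to $F$ and to $(A_\varepsilon\setminus N_\varepsilon(F))\cup B$) to $I$ before taking the sup, rather than by cutting off a flow near $F$.

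Your direct-deformation approach is plausible, but note two points. First, in Szulkin's framework the phrase ``integrate a pseudo-gradient vector field'' is loose: since $\psi$ is only convex l.s.c., the descent map is built by convex interpolation toward descent points, patched via a partition of unity, not by solving an ODE; in particular the decrease inequality you need is the one in Theorem~\ref{thm:deformation}, not the usual flow estimate. Second, your handling of the refinement $\inf_F I=c$ (``cut off near $F$ and observe the only obstruction is near $F$'') is the Ghoussoub--Preiss idea; making it precise in the non-smooth, singular-domain setting requires exactly the kind of $\varepsilon$-bookkeeping that the paper carries out with the penalty functions $f_1,f_2$. The Ekeland-on-deformations route buys you a cleaner verification that the produced point lies near both $A_\varepsilon$ and $F$; your route is closer in spirit to the original smooth argument but would need more care to show the patched descent map respects the $\delta_0$-barrier at every time.
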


\noindent
When $D_{\Phi} = X$, the result above has been established by Livrea and Marano in 
\cite[Theorem~3.1]{LiMa04}.
Here, we provide an extension to the case 
$D_{\Phi} \ne X$, having in mind the peculiar topology induced by the singularity
of the gravitational force in \eqref{eq:kepleroforzato}.
We show in Lemma~\ref{proplambdak} that our $D_{\Phi}$ is made up by countably many connected components $\Lambda_{k}$, $k\in\ZZ$, which are open in $X$ and, moreover,
$\Lambda_{k}\cap D_{\psi}$ is non-empty and bounded for each $k\ne 0$. 
For each $k\ne 0$, in Section~\ref{sec3.2} we find a minimum point $\bar{x}$ of $I$ in $\Lambda_{k}$: more precisely, $\bar{x}$ is a $T$ periodic solution of \eqref{eq:kepleroforzato} which makes $k$ turns in the plane around the origin in each period, where the sign of $k$ gives orientation.
Now, the first fundamental group of $\Lambda_{k}$ is non-trivial when $k\ne 0$: the loop given by $ [0,1]\ni s\mapsto \bar{x}(\cdot+sT)\in\Lambda_{k} $ is not nullhomotopic in $\Lambda_{k}$.
Therefore, one can exploit the family $\mathcal{F}$ of loops in $\Lambda_{k}$ which are not nullhomotopic in order to get the min-max level $c$ of Theorem~\ref{thm:minmax}.

\noindent
Here a problem arises about the class of continuous deformations $\eta$ with respect to which $\mathcal{F}$ is needed to be stable: in the result by Livrea and Marano
the presence of singularities of $\Phi$ is not considered and, thus, the deformations, as usual, are only required to fix the ``boundary''
$(\{0\}\times X)\cup([0,1]\times B)$.
As a consequence, that class of deformations do not comply with the topology of $\Lambda_{k}$: a loop which is not nullhomotopic in $\Lambda_{k}$ could be transformed into a nullhomotopic one by such a deformation $\eta$ that doesn't behave properly near $\partial D_{\Phi}$.
In other words, the family $\mathcal{F}$ we would like to use is not homotopy stable in the usual sense.
On the other hand, we cannot add to $\mathcal{F}$ also all nullhomotopic loops
since, in that case, we could not exclude that the PS-sequence provided by a result like Theorem~\ref{thm:minmax} actually converges to the minimum point $\bar{x}$ itself: we need our loops to really turn around the
singularities of $\Phi$.

\noindent
Our solution consists in adding a condition on the deformations $\eta$ that can be
used in checking the homotopic stability of any set $ A\in\mathcal{F} $:
namely that $\eta([0,1],A)\subset D_{\Phi}$ (see assumption 1 in~\ref{thm:minmax}).
That condition imposes that $\eta(s,A)$ never crosses the boundary of $D_{\Phi}$
and, thus, $\eta(s,A)$ remains not nullhomotopic in $\Lambda_{k}$ as $s$ ranges in $[0,1]$.

\noindent
By reducing the class of admissible deformations $\eta$, we enlarge the
collection of families $\mathcal{F}$ that are homotopy stable and obtain a result which is slightly more general than \cite[Theorem~3.1]{LiMa04}.
Clearly, the main issue is to exploit the fact that the singularities of our functional $\Phi$ satisfy the fourth condition in Assumption~\ref{ass:I}: some small, but non-trivial, adjustments of the proof by Livrea and Marano can accomplish the goal.
Since the whole argument in \cite{LiMa04} is already quite delicate, we decided to provide a complete proof of Theorem~\ref{thm:minmax} in the Appendix.

\begin{Remark}
As well-known, if any PS-sequence possesses a convergent subsequence, the functional $I$ is said to satisfy the Palais-Smale condition. Thus, if a functional $I$ in the assumptions of Theorem \ref{thm:minmax} also satisfies the PS-condition, a critical point of $I$ at level $c$ is directly obtained, in view of the second statement in Proposition \ref{pro:minimoPScritico}. 
In our application, however, we will not make direct use of this strategy. 
Indeed, in spite of the fact that the PS-sequence provided by Theorem \ref{thm:minmax} will be checked to be bounded, due to the specific form of the functional it seems hard to prove that it is relatively compact in the (strong) topology of $X$. Nonetheless, we will manage to prove that, in our setting, the relative compactness in a weaker topology is enough to obtain a critical point at level $c$ (cf. Proposition \ref{pstype}). 
\end{Remark}

\section{The forced problem: multiple solutions}\label{sec3}
In this section we deal with the forced problem
\begin{equation}\label{eq:kepleroforzato2}
\frac{d}{dt}\left(\frac{m\dot{x}}{\sqrt{1-|\dot{x}|^{2}/c^{2}}}\right) =
-\alpha\frac{x}{|x|^{3}} + \nabla_{x}U(t,x), \qquad x \in \mathbb{R}^2 \setminus \{0\},
\end{equation}
and we give the proof of Theorem \ref{thmain}. 
More precisely, in Section \ref{sec3.1} we first reformulate the $T$-periodic problem associated with \eqref{eq:kepleroforzato2}
as a critical point problem for a non-smooth action functional $I$, so as to enter the abstract setting developed in Section \ref{sec2}.
Then, for any integer $k \neq 0$, in Section \ref{sec3.2} we prove the existence of a first $T$-periodic solution to \eqref{eq:kepleroforzato2} with winding number $k$, arising as a local minimum of $I$. Finally, in Section \ref{sec3.3} we prove the existence of the second $T$-periodic solution, again with winding number $k$, by the use of the min-max principle introduced in the previous section.

\noindent
From now on, $m,c,\alpha > 0$ are fixed and the potential $U$ satisfies the following 
\begin{Assumption}\label{ass:U}
	$ U : \RR\times\RR^{2} \to \RR $ is such that:
	\noindent
	\begin{enumerate}
		\item There exists $T>0$ such that $U(t+T,x)=U(t,x), \quad \forall \ (t,x)\in \RR\times\RR^{2}$
		\item $U(t,\cdot)$ is differentiable in $\RR^2$, for every $t\in \RR$
		\item $U$ and $ \nabla_{x}U $ satisfy the $L^{1}$-Carath\'eodory condition, i.e. they are measurable in $t$, continuous in $x$ and for every $r > 0$ there exists $\eta_r \in L^1(0,T)$ such that
		\[
		\vert U(t,x) \vert + \vert \nabla_x U(t,x) \vert \leq \eta_r(t)
		\]
		for a.e. $t \in [0,T]$ and every $x \in \mathbb{R}^2$ with $\vert x \vert \leq r$.
	\end{enumerate}
\end{Assumption}

\subsection{Definition and properties of the action functional}\label{sec3.1}

As already mentioned in the introduction, in what follows we take advantage of some results proved 
by Arcoya, Bereanu and Torres in \cite{ArBeTo20}, where a (non-smooth) variational formulation for Lorentz force equation \eqref{lorentzintro} is provided. In our setting some extra work is needed due to the singularity of the nonlinear term.

\noindent
Let us consider the Banach space
\[
X=W^{1,\infty}_{T}=\left\{ x\in W^{1,\infty}(0,T;\RR^{2}) : x(0)=x(T) \right\},
\]
endowed with its usual norm $ \|x\| = \|x\|_{\infty} + \|\dot{x}\|_{\infty}$. 
We set, for every $s \in \mathbb{R}^2$ with $\vert s \vert\leq c$,
\begin{equation}\label{def-f}
F(s) = mc^2 \left( 1 - \sqrt{ 1 - \frac{\vert s \vert^2}{c^2}}\right),
\end{equation}
and we introduce the functional
$\psi: X \to (-\infty,+\infty]$ defined by
\[
\psi(x) = \begin{dcases}
\int_{0}^{T} F(\dot x(t)) \, dt
& \text{if } \|\dot{x}\|_{\infty} \le c; \\
+\infty & \text{otherwise}.
\end{dcases}
\]
Moreover, we define the functional $ \Phi:X\to(-\infty,+\infty]$ as
$$
\Phi(x) = \int_{0}^{T} \dfrac{\alpha}{|x(t)|} \, dt + \int_{0}^{T} U(t,x(t)) \, dt,\quad \forall \ x\in X.
$$
Of course, $\Phi$ is finite on the open subset
\[
\Lambda = \{ x \in X : x(t) \neq 0 \text{ for all } t \in [0,T] \},
\]
while, due to the singularity of the integrand $\alpha/\vert x \vert$, it could be infinite elsewhere. The simple, but crucial, observation, is that in fact $\Phi(x)$ is always infinite outside $\Lambda$: indeed, if a function $x \in X$ vanishes somewhere in $[0,T]$, due to its Lipschitz continuity the integral
$\int_{0}^{T} \alpha/|x(t)|\, dt$ is surely infinite, and so $\Phi(x) = +\infty$. Since this will be often used in what follows, we state it as a Lemma.

\begin{Lemma}\label{lemfinito}
For every $x \in X$, $\Phi(x) < +\infty$ if and only if $x \in \Lambda$: that is, with the notation of Assumptions~\ref{ass:I}, $D_{\Phi} = \Lambda$.
\end{Lemma}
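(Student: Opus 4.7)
The claim is an \textquotedblleft iff\textquotedblright, and both directions are short; I would treat them separately.

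For the easy direction ($x\in\Lambda\Rightarrow\Phi(x)<+\infty$): since $x:[0,T]\to\RR^{2}$ is continuous and never vanishes on the compact interval $[0,T]$, there is $\delta>0$ with $|x(t)|\ge\delta$ for all $t$. Hence $\alpha/|x(t)|\le\alpha/\delta$, so the first summand in $\Phi(x)$ is finite. For the second, $x$ is bounded by some $r=\|x\|_{\infty}$, so Assumption~\ref{ass:U}.3 provides $\eta_{r}\in L^{1}(0,T)$ dominating $|U(t,x(t))|$, which yields finiteness.

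For the nontrivial direction ($x\notin\Lambda\Rightarrow\Phi(x)=+\infty$): pick $t_{0}\in[0,T]$ with $x(t_{0})=0$. By $T$-periodicity we may extend $x$ to all of $\RR$ as a Lipschitz $T$-periodic function and then work in a two-sided neighborhood of $t_{0}$ (if $t_{0}\in\{0,T\}$, the same argument works on $[0,\delta]$ alone, or on $[T-\delta,T]$, using $x(0)=x(T)=0$). Since $x\in W^{1,\infty}_{T}$, it is Lipschitz with constant $L=\|\dot x\|_{\infty}$; if $L=0$ then $x\equiv0$ and the first integrand is $+\infty$ everywhere, so assume $L>0$. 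Then
\[
|x(t)|=|x(t)-x(t_{0})|\le L\,|t-t_{0}|,
\]
which gives, on a small neighborhood $(t_{0}-\varepsilon,t_{0}+\varepsilon)\cap[0,T]$ of $t_{0}$,
\[
\frac{\alpha}{|x(t)|}\ge\frac{\alpha}{L\,|t-t_{0}|}.
\]
The right-hand side is not integrable near $t_{0}$, so $\int_{0}^{T}\alpha/|x(t)|\,dt=+\infty$. The potential integral $\int_{0}^{T}U(t,x(t))\,dt$ is nevertheless still finite by the same $L^{1}$-Carath\'eodory bound used above (here $x$ is still bounded because it is continuous on $[0,T]$). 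As the first summand is non-negative and infinite while the second is a real number, we conclude $\Phi(x)=+\infty$.

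There is no real obstacle here: the entire content of the lemma is the pointwise Lipschitz estimate $|x(t)|\le L|t-t_{0}|$ near a zero of $x$, together with the non-integrability of $1/|t-t_{0}|$. The only mild care needed is handling the boundary cases $t_{0}\in\{0,T\}$ via $T$-periodicity, and the degenerate case $L=0$; both are immediate.
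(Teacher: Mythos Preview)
Your proof is correct and follows exactly the approach of the paper: the key observation, stated informally in the text just before the lemma, is that the Lipschitz bound $|x(t)|\le L|t-t_{0}|$ forces $\alpha/|x(t)|$ to dominate a non-integrable function near any zero of $x$. Your write-up simply makes explicit the details (the $L^{1}$-Carath\'eodory bound for the $U$-term, the degenerate case $L=0$, and the boundary case $t_{0}\in\{0,T\}$) that the paper leaves to the reader.
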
 


\noindent
We then define $I: X \to (-\infty,+\infty]$ as
$$
I(x) = \psi(x) + \Phi(x),\quad \forall \ x\in X.
$$
In the next proposition we show that the functional $I$ is suitable for the abstract setting introduced in Section \ref{sec2}; moreover, we highlight a further important property of the functional $\psi$.

\begin{Proposition}\label{pro:assumptionI}
The functional $ I = \psi + \Phi $ satisfies Assumptions~\ref{ass:I}. Moreover, the functional $\psi$ is lower semicontinuous with respect to uniform convergence, namely: if $x \in X$ and $\{x_n\}$ is a sequence in $D_\psi$ such that $x_n \to x$ uniformly on $[0,T]$, then 
$x \in D_\psi$ and 
\begin{equation}\label{weaklsc}
\psi(x) \leq \liminf_{n \to +\infty} \psi(x_n).
\end{equation}
\end{Proposition}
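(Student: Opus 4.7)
My plan is to verify the four parts of Assumption~\ref{ass:I} and then prove the additional lsc statement; the main obstacle will be condition~(4). Parts (1)--(3) are essentially routine. For (1), Lemma~\ref{lemfinito} gives $D_\Phi = \Lambda$, and openness of $\Lambda$ in $X$ is immediate from the continuous embedding $X \hookrightarrow C([0,T];\RR^2)$ since $r := \min_{[0,T]} |x| > 0$ implies $|y(t)| > r/2$ for all $t$ whenever $\|y-x\| < r/2$; any nonzero constant lies in $D_I$. For (2), the function $F$ is convex and continuous on $\{|s|\leq c\}$ (being the composition of $s\mapsto |s|^2$ with the convex increasing map $u\mapsto mc^2(1-\sqrt{1-u/c^2})$ on $[0,c^2]$), so $\psi$ is convex; lower semicontinuity on $X$ follows from continuity on $D_\psi$ in the $X$-norm (by dominated convergence, using $F \leq mc^2$ and the uniform convergence of $\dot x_n$ built into the $X$-norm) together with the fact that $\{y : \|\dot y\|_\infty > c\}$ is open, and the continuity condition on compact sets with $\sup_A \psi < +\infty$ is then immediate since any such $A$ lies in $D_\psi$. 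For (3), I would split $\Phi$ into the Keplerian and $U$ integrals: on the neighborhood $\{\|y-x\|_\infty < r/2\}$ of $x\in \Lambda$ (with $r := \min|x|$) the elementary estimate $|1/|y(t)|-1/|z(t)|| \leq (4/r^2)|y(t)-z(t)|$ yields Lipschitz constant $4\alpha T/r^2$ for the Keplerian part, while Assumption~\ref{ass:U}.3 with $R := \|x\|_\infty + r/2$ and the mean-value theorem give the Lipschitz bound $\|\eta_R\|_1 \|y-z\|_\infty$ for the $U$ part.

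For condition~(4), given $\dist(x_n,\partial \Lambda) \to 0$ I pick $y_n \in \partial \Lambda$ with $\|x_n - y_n\| \to 0$; since $y_n \in X \hookrightarrow C$ vanishes at some point, $\varepsilon_n := \min_t |x_n(t)| \leq \|x_n-y_n\|_\infty \to 0$. The key observation is that $x_n \in D_\psi$ implies $x_n$ is $c$-Lipschitz, so $\|x_n\|_\infty \leq \varepsilon_n + cT$ stays bounded by some $R$; the Carath\'eodory bound $\eta_R$ then keeps $\int_0^T U(t,x_n)\,dt$ bounded, while $\psi \geq 0$. Fixing $t_n^* \in [0,T]$ with $|x_n(t_n^*)| = \varepsilon_n$ and using $|x_n(t)| \leq \varepsilon_n + c|t - t_n^*|$ on a fixed interval around $t_n^*$ (after $T$-periodic extension), one obtains
\[
\int_0^T \frac{\alpha}{|x_n(t)|}\,dt \;\geq\; \int_{-\delta}^\delta \frac{\alpha}{\varepsilon_n + c|s|}\,ds \;=\; \frac{2\alpha}{c}\log\frac{\varepsilon_n + c\delta}{\varepsilon_n} \;\to\; +\infty,
\]
whence $I(x_n) \to +\infty$.

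Finally, for lower semicontinuity under uniform convergence: if $x_n \to x$ uniformly with $x_n \in D_\psi$, each $x_n$ is $c$-Lipschitz and $T$-periodic, so the limit $x$ inherits both properties by passing to the limit in $|x_n(t)-x_n(s)| \leq c|t-s|$, giving $x \in D_\psi$. To prove \eqref{weaklsc}, I extract a subsequence realizing $\liminf_n \psi(x_n)$; since $\|\dot x_n\|_\infty \leq c$, Banach--Alaoglu yields a further subsequence with $\dot x_{n_k} \rightharpoonup^* w$ in $L^\infty(0,T;\RR^2)$, and the identity $x_n(t) = x_n(0) + \int_0^t \dot x_n\,ds$ combined with uniform convergence of $x_n$ forces $w = \dot x$ almost everywhere. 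Convexity and continuity of $F$ (extended by $+\infty$ outside $\{|s|\leq c\}$) make $u \mapsto \int_0^T F(u)\,dt$ weakly lower semicontinuous on $L^2(0,T;\RR^2)$; since weak-$*$ convergence of a bounded sequence in $L^\infty$ implies weak convergence in $L^2$ on the finite interval $(0,T)$, this yields $\psi(x) \leq \liminf_k \psi(x_{n_k})$, which by choice of the subsequence equals $\liminf_n \psi(x_n)$, as required.
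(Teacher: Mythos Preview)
Your proof is correct. The overall structure matches the paper's, but two pieces are handled differently and are worth flagging.

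For condition~(4), the paper argues indirectly: it shows that both $\{x_n\}$ and the approximating boundary sequence $\{y_n\}$ are bounded in $X$, applies Ascoli--Arzel\`a to extract a common uniform limit $z$ which vanishes somewhere and is $c$-Lipschitz, and then invokes Lemma~\ref{lemfinito} plus Fatou to conclude $\int \alpha/|x_n|\to+\infty$. Your route is more elementary and quantitative: from $\varepsilon_n:=\min_t|x_n(t)|\to 0$ and the $c$-Lipschitz bound you obtain the explicit lower bound $\frac{2\alpha}{c}\log\frac{\varepsilon_n+c\delta}{\varepsilon_n}$ on the Kepler integral, bypassing the compactness step entirely. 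This is a genuine simplification; the paper's argument, on the other hand, makes the role of Lemma~\ref{lemfinito} more transparent and fits the pattern reused later in the Palais--Smale analysis.

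For the lower semicontinuity of $\psi$ under uniform convergence the paper simply quotes \cite[Lemma~3]{ArBeTo20}, whereas you supply a self-contained argument via Banach--Alaoglu (using separability of $L^1$ for sequential weak-$*$ compactness), identification of the weak-$*$ limit of $\dot x_{n_k}$ with $\dot x$, and the classical weak lower semicontinuity of convex integral functionals. This is exactly the content behind the cited lemma, so nothing is lost. One small remark: for part~(3) the paper notes that $\Phi$ is in fact $C^1$ on $\Lambda$ (with the explicit differential \eqref{differenziale}), which is slightly stronger than the local Lipschitz continuity you establish and is used later to write $\Phi^0(x;u)=\Phi'(x)[u]$.
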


\begin{proof}
First, we are going to verify separately each point in Assumptions~\ref{ass:I}.
\begin{enumerate}
\item By Lemma \ref{lemfinito} we have $D_{\Phi} = \Lambda$ which is open in $X$. On the other hand,
$D_{\psi} = \{ x \in X \, : \, \Vert \dot x \Vert_\infty \leq c \}$ so that clearly $D_{\psi} \cap D_{\Phi} \neq \emptyset$.
\item The convexity of $\psi$ follows directly from the convexity of the function $F$. 
Moreover, the restriction $\psi|_{D_{\psi}}$ of $\psi$ to $D_{\psi}$ is continuous, as it is easily checked using the dominated convergence theorem (cf. \cite[Lemma 2]{ArBeTo20}). Since $D_{\psi}$ is closed, the lower semicontinuity of $\psi$ on $X$ follows.
\item The functional $\Phi$ is actually of class $C^1$ (and, hence, locally Lipschitz continuous) on the open set $D_{\Phi} = \Lambda$, with
\begin{equation}\label{differenziale}
\Phi'(x)[y] = \Phi^0(x;y) = \int_0^T \left\langle -\alpha \frac{x(t)}{\vert x(t) \vert^3} + \nabla_x U(t,x(t)),y(t) \right \rangle \, dt,
\end{equation}
for any $x \in \Lambda$ and $y \in X$ (see, for instance, \cite{MaWi89}).
\item At first, we notice that 
$$
\partial D_{\Phi} = X \setminus \Lambda = \{x \in X : x(t) = 0 \textrm{ for some } t \in [0,T]\}.
$$
So, let us consider a sequence $\{x_n\}$ in $D_{\Phi} \cap D_{\psi}$ such that $d_n := \dist(x_n, \partial D_{\Phi}) \to 0$
and, accordingly, let $y_n \in \partial D_{\Phi}$ be such that $\Vert x_n - y_n \Vert \leq 2d_n$.
Since $\Vert \dot x_n \Vert_\infty \leq c$ for any $n$, we find that
$$
\Vert \dot y_n \Vert_\infty \leq c + \Vert \dot y_n - \dot x_n  \Vert_\infty \leq c + \Vert  y_n - x_n  \Vert \leq c + d_n \leq c+1
$$
for $n$ large enough. Moreover, since $y_n(t_n) = 0$ for some $t_n \in [0,T]$, we have
$$
\Vert y_n \Vert_\infty \leq (c+1)T
$$
and thus the sequence $\{y_n\}$ is bounded in $X$. Since $\Vert x_n - y_n \Vert \leq 2d_n$, the sequence $\{x_n\}$ is bounded in $X$ as well. Therefore, the Ascoli-Arzelà theorem yields the existence of a continuous function $z$ such that, up to subsequence, $x_n \to z$ and $y_n \to z$ uniformly on $[0,T]$. Hence, $z(0) = z(T)$ and $z(\bar{t}) = 0$ for some $\bar{t} \in [0,T]$, limit point of the sequence $t_n$. Moreover, passing to the limit in the Lipschitz-continuity condition
$$
\vert x_n(t_2) - x_n(t_1) \vert \leq c |t_2-t_1|, \quad \text{ for every } t_1,t_2 \in [0,T],
$$
we easily see that $z \in D_\psi \subset X$ and so, by Lemma \ref{lemfinito},
$$
\int_{0}^{T} \dfrac{\alpha}{|z(t)|}\, dt = +\infty.
$$
Hence, by Fatou's lemma
$$
\liminf_{n \to +\infty} \int_0^T \frac{\alpha}{\vert x_n(t) \vert}\, dt  \geq \int_0^T \liminf_{n \to +\infty} \frac{\alpha}{\vert x_n(t) \vert}\, dt  = 
\int_{0}^{T} \dfrac{\alpha}{|z(t)|}\, dt = +\infty.
$$
Since $\psi(x_{n})$ and $\int_0^T U(t,x_n(t))dt$ remain bounded by the uniform convergence of $x_n$ to $z$, we finally conclude that $I(x_n) \to +\infty$ as desired.
\end{enumerate}
Finally, for the proof of the lower semincontinuity property \eqref{weaklsc} we refer the reader to \cite[Lemma 3]{ArBeTo20} (dealing with Dirichlet boundary condition; the periodic problem is taken into account in Section 4 of the same paper).
\end{proof}

\noindent
The following proposition establishes a compactness property of the functional $I$. Notice, however, that 
what we are going to prove differs from the usual Palais-Smale condition: indeed, the convergence of a subsequence in a weaker topology is obtained.

\begin{Proposition}\label{pstype}
Let $\{x_n\} \subset X$ be a bounded Palais-Smale sequence at level $c$. Then, up to subsequence, $x_n \to x$ uniformly on $[0,T]$, with $x$ a critical point of $I$ satisfying $I(x) = c$. 
\end{Proposition}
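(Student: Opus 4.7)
The plan is to extract a uniformly convergent subsequence via Ascoli--Arzel\`a, use Assumption~\ref{ass:I}.4 to force the limit into $\Lambda$, and then pass to the limit in the Palais-Smale inequality, the crucial step being an auxiliary convergence $\psi(x_n)\to\psi(x)$ that will be obtained by testing the PS-inequality with $z=x$.

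First I would note that, since $\{x_n\}$ is a PS-sequence at the finite level $c$, for $n$ large one has $x_n\in D_I\subset D_\psi$, hence $\|\dot x_n\|_\infty\le c$. Combined with the boundedness in $X$, this gives equibounded and equi-Lipschitz iterates, so Ascoli--Arzel\`a yields a subsequence with $x_n\to x$ uniformly on $[0,T]$; the limit is $T$-periodic and Lipschitz with $\|\dot x\|_\infty\le c$, so $x\in D_\psi\subset X$. To show $x\in D_\Phi=\Lambda$, I would argue by contradiction: if $x(t_0)=0$, the constant translates $y_n:=x_n-x_n(t_0)$ lie in $X$, satisfy $y_n(t_0)=0$, and hence belong to $\partial D_\Phi$ by Lemma~\ref{lemfinito}; since $\|x_n-y_n\|=|x_n(t_0)|\to 0$, we have $\dist(x_n,\partial D_\Phi)\to 0$, and Assumption~\ref{ass:I}.4 forces $I(x_n)\to+\infty$, contradicting $I(x_n)\to c$. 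Thus $x\in D_I$.

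Next, because $x$ is uniformly bounded away from zero, so are the $x_n$ for $n$ large. Using the explicit formula \eqref{differenziale}, the uniform control of $x_n/|x_n|^3$, the $L^1$-Carath\'eodory bound on $\nabla_xU$, and dominated convergence, one checks that for every fixed $z\in X$,
\[
\Phi^0(x_n;z-x_n)=\Phi'(x_n)[z-x_n]\longrightarrow\Phi'(x)[z-x]=\Phi^0(x;z-x).
\]
Testing the PS-inequality with $z=x$,
\[
\Phi^0(x_n;x-x_n)+\psi(x)-\psi(x_n)\ge -\epsilon_n\|x-x_n\|,
\]
the first term tends to $0$ by the displayed limit, and $\epsilon_n\|x-x_n\|\to 0$ since $\|\dot x-\dot x_n\|_\infty$ is bounded (by $2c$) and $\epsilon_n\to 0^{+}$. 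Hence $\limsup_n\psi(x_n)\le\psi(x)$, while the lower semicontinuity of $\psi$ with respect to uniform convergence established in Proposition~\ref{pro:assumptionI} gives $\psi(x)\le\liminf_n\psi(x_n)$. Together, $\psi(x_n)\to\psi(x)$.

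Finally, continuity of $\Phi$ on $\Lambda$ yields $\Phi(x_n)\to\Phi(x)$, so $I(x_n)\to I(x)$ and $I(x)=c$; passing to the limit in the PS-inequality for arbitrary $z\in X$ produces
\[
\Phi^0(x;z-x)+\psi(z)-\psi(x)\ge 0,\quad\forall\,z\in X,
\]
so $x$ is a critical point of $I$. The main obstacle is precisely the convergence $\psi(x_n)\to\psi(x)$: since the subsequence converges only uniformly, one cannot use continuity of $\psi$ on $D_\psi$ directly, and the missing upper semicontinuity must be recovered by combining the one-sided information from the PS-inequality tested at $z=x$ with the uniform lower semicontinuity of $\psi$.
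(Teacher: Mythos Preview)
Your proof is correct and follows essentially the same route as the paper's: Ascoli--Arzel\`a compactness, exclusion of collisions via Assumption~\ref{ass:I}.4, convergence of $\Phi'(x_n)[z-x_n]$ from formula~\eqref{differenziale}, and the test $z=x$ in the PS-inequality to upgrade the uniform lower semicontinuity of $\psi$ to full convergence $\psi(x_n)\to\psi(x)$. Your explicit constant-shift construction $y_n=x_n-x_n(t_0)$ to verify $\dist(x_n,\partial D_\Phi)\to 0$ is a clean concrete version of what the paper handles more elliptically, and you establish $\psi(x_n)\to\psi(x)$ \emph{before} (rather than after) passing to the limit in the critical-point inequality, but these are cosmetic differences.
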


\begin{proof}
We follow closely the arguments in \cite[Lemma 5]{ArBeTo20}, with just some more care due to the presence of the singularity.
Let $\{x_n\} \subset X$ be a bounded Palais-Smale sequence at level $c$ and notice that this implies $\{x_n\} \subset D_{\Phi} \cap D_{\psi}$ (otherwise $I(x_n) = +\infty$). Using the Ascoli-Arzelà theorem as in the proof of the fourth point of Proposition \ref{pro:assumptionI}, we thus infer the existence of $x \in D_\psi$ such that, up to subsequence, $x_n \to x$ uniformly on $[0,T]$; moreover, by \eqref{weaklsc},
\begin{equation}\label{scancora}
\psi(x) \leq \liminf_{n \to +\infty} \psi(x_n).
\end{equation}
On the other hand, Assumption~\ref{ass:I}-4 yields that $x \notin \partial D_{\Phi}$, since otherwise it would be
$I(x_n) = \psi(x_n) + \Phi(x_n) \to +\infty$, contrarily to $I(x_n) \to c$.
Thus, recalling that $\partial D_{\Phi} = X \setminus \Lambda$, we conclude that $x \in \Lambda$ and, hence,
\begin{equation}\label{convphi}
\Phi(x) = \lim_{n \to +\infty} \Phi(x_n) \quad \text{ and } \quad \Phi'(x)[z-x] = \lim_{n \to +\infty} \Phi'(x_n)[z-x_n],\quad \forall \ z \in X,
\end{equation}
where the last equality is easily obtained by using the explicit formula \eqref{differenziale}. By the above relations together with \eqref{scancora}, passing to the limsup in the inequality characterizing the PS-sequence we thus found
$$
\psi(z) - \psi(x) + \Phi'(x)[z-x] \geq 0, \quad \forall z \in X,
$$
proving that $x$ is a critical point of $I$. It remains to show that $I(x) = c$. For this, we choose $z = x$ in the inequality of the PS-sequence so as to obtain
$$
\psi(x_n) \leq \psi(x) + \Phi'(x_n)[x-x_n] + \epsilon_n \Vert x - x_n \Vert .
$$
Thus, $\limsup_{n \to +\infty} \psi(x_n) \leq \psi(x)$ which together with \eqref{scancora} yields
$\psi(x) = \lim_{n \to +\infty} \psi(x_n)$. 
Recalling \eqref{convphi}, we finally find 
$$
I(x) = \lim_{n \to +\infty} I(x_n) = c,
$$ 
thus concluding the proof.
\end{proof}

\noindent
The rest of the section is devoted to the proof that critical points of the functional $I$ correspond to $T$-periodic solutions of 
\eqref{eq:kepleroforzato2}, as stated in the proposition below. 

\begin{Proposition}\label{pro:criticisoluzioni}
Any critical point of $I$, according to Definition~\ref{def:puntocritico}.1, is a $T$-periodic 
solution of \eqref{eq:kepleroforzato2}.
\end{Proposition}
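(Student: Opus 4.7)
The approach is to unfold the abstract variational inequality of Definition~\ref{def:puntocritico} into the classical Euler--Lagrange equation by means of a Lagrange multiplier / convex subdifferential argument in $L^\infty$, in the spirit of~\cite{ArBeTo20}, exploiting the fact that any critical point of $I$ automatically lies in the open set $\Lambda$ so the Keplerian singularity plays no active role at this stage.

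Let $x$ be a critical point: then $x\in D_I=\Lambda\cap\{\|\dot x\|_\infty\le c\}$, and since $\Phi$ is of class $C^1$ on $\Lambda$ with derivative given by~\eqref{differenziale}, setting $g(t):=\alpha\,x(t)/|x(t)|^3 - \nabla_x U(t,x(t))\in L^1(0,T;\RR^2)$ the critical point inequality reads
\[
\int_0^T F(\dot z)\,dt - \int_0^T F(\dot x)\,dt \ge \int_0^T g(t)\cdot(z(t)-x(t))\,dt,\qquad\forall z\in X.
\]
First, testing with $z=x+v$ for arbitrary constant $v\in\RR^2$ (so that $\dot z=\dot x$) forces $\int_0^T g\,dt=0$. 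Hence $G(t):=\int_0^t g(s)\,ds$ is continuous and $T$-periodic, and integration by parts (the boundary terms vanish by periodicity and $G(0)=G(T)=0$) rewrites the inequality as
\[
\int_0^T [F(\dot z)+G\cdot \dot z]\,dt \ge \int_0^T [F(\dot x)+G\cdot \dot x]\,dt,\qquad\forall z\in X,\ \|\dot z\|_\infty\le c.
\]
Equivalently, $\dot x$ minimizes the convex integral functional $w\mapsto \int_0^T[F(w)+G\cdot w]\,dt$ over $\{w\in L^\infty(0,T;\RR^2):\|w\|_\infty\le c,\ \int_0^T w\,dt=0\}$. A Lagrange multiplier argument (applicable since the zero-mean constraint is linear and Slater's condition is trivial) then yields $\lambda\in\RR^2$ such that the pointwise subdifferential inclusion
\[
\lambda - G(t) \in \partial F(\dot x(t))\qquad\text{for a.e. } t\in[0,T]
\]
holds, where $\partial F$ denotes the convex subdifferential of $F$ (extended by $+\infty$ outside $\overline{B}_c(0)$). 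Because $\partial F(w)=\varnothing$ on the sphere $\{|w|=c\}$ (the gradient of $F$ blows up there) and equals $\{\nabla F(w)\}$ for $|w|<c$, we deduce $|\dot x(t)|<c$ a.e.\ and
\[
p(t):=\frac{m\dot x(t)}{\sqrt{1-|\dot x(t)|^2/c^2}} = \nabla F(\dot x(t)) = \lambda - G(t).
\]
The function $p$ is therefore continuous, $T$-periodic, and absolutely continuous with $\dot p=-\dot G=-g$, which is precisely~\eqref{eq:kepleroforzato2}.

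The main technical difficulty is the Lagrange multiplier step, that is, passing from the global integral inequality with linear constraint $\int_0^T w\,dt=0$ to a pointwise subdifferential condition on $\dot x$; this is a standard but delicate matter of convex analysis in $L^\infty$, which can be handled via Fenchel duality (noting that the conjugate $F^*(p)=c\sqrt{m^2c^2+|p|^2}-mc^2$ is smooth and superlinear) essentially as in~\cite{ArBeTo20}, the Keplerian singularity being harmless at this stage since $x\in\Lambda$ ensures $g\in L^1$. A welcome by-product is that the subluminal regularity $|\dot x|<c$ emerges automatically from the identification of $\partial F$, rather than needing to be established in advance.
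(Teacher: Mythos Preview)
Your proof is correct and takes a genuinely different route from the paper's. Both begin identically: the critical point inequality, together with the $C^1$-smoothness of $\Phi$ on $\Lambda$, says that $x$ is a global minimizer of the auxiliary convex functional $J(u)=\psi(u)+\int_0^T\langle f,u\rangle\,dt$ with $f=-g$. From here the arguments diverge. The paper packages the passage ``minimum of $J$ $\Rightarrow$ solution of the ODE'' into Lemma~\ref{lem:aux}, proved \emph{indirectly}: first establish that a solution of the auxiliary equation~\eqref{eq:phifT} exists whenever $\int_0^T f=0$ (quoting a result of Bereanu--Mawhin \cite{BeMa08} that solves $\int_0^T\varphi^{-1}\bigl(d+\int_0^s f\bigr)\,ds=0$ for the unknown $d\in\RR^2$), then check via convexity that solutions are minima, and finally use strict convexity of $F$ to see that both the solution set and the minimum set equal $\{u_0+c:c\in\RR^2\}$, hence coincide. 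You instead proceed \emph{directly}: after the integration by parts, a Lagrange multiplier for the finite-dimensional zero-mean constraint reduces to a pointwise subdifferential inclusion, and the crucial regularity $|\dot x|<c$ drops out from the emptiness of $\partial F$ on the sphere $\{|w|=c\}$. Your route is more self-contained (no external ODE existence lemma needed) and makes the subluminal bound transparent as a by-product of convex analysis; the paper's route is more modular---Lemma~\ref{lem:aux} is of independent interest---but the mechanism ensuring $|\dot x|<c$ is hidden inside the quoted result from \cite{BeMa08}.
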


\noindent
The above statement essentially corresponds to \cite[Theorem 6]{ArBeTo20}. We point out that the difficulties in establishing this result come from the fact that, a priori, a critical point $x \in D_I$ could satisfy $\Vert \dot x \Vert_\infty = c$: to exclude this, an ad-hoc argument is thus required. On the other hand, the singularity of the nonlinear term does not play a real role, since a critical point satisfy $x \in D_\Phi$ by definition and the notion of critical point is of local nature. However, since the complete argument is quite delicate, we chose here to provide the complete proof. As the one in \cite{ArBeTo20}, it relies in an essential way on some preliminary lemmas established in \cite{BeJeMa11,BeMa08} for the auxiliary problem
\begin{equation}\label{eq:phifT}
\begin{dcases}
\frac{d}{dt}\left(\frac{m\dot{u}}{\sqrt{1-|\dot{u}|^{2}/c^{2}}}\right) = f(t) & \text{in }[0,T]\\
u(0)-u(T)=0=\dot{u}(0)-\dot{u}(T) &
\end{dcases}
\end{equation}
with $f\in L^{1}(0,T) $, and the related functional $ J:X\to\left(-\infty,+\infty\right] $
defined by
\begin{equation}\label{eq:J}
J(u) = \psi(u) + \int_{0}^{T} \langle f(t), u \rangle \, dt,\quad \forall \ u\in X.
\end{equation}
More precisely, we are going to make use of the next result.

\begin{Lemma}\label{lem:aux}
The following statements hold:
\begin{enumerate}
\item
problem~\eqref{eq:phifT} has a solution if and only if $\int_{0}^{T} f(t)\, dt =0$;
in this case, the set of solutions is given by $ \{ u_{0} + c \} $ where $ u_{0} $ is a fixed
solution and $c$ is any constant vector;
\item
the functional $J$ has a global minimum if and only if $\int_{0}^{T} f(t)\, dt =0$;
in this case, the set of minima  is given by $ \{ u_{0} + c \} $ where $ u_{0} $ is a minimum and $c$ is any constant vector;
\item
$ u $ is a solution of \eqref{eq:phifT} if and only if it is a global minimum of $J$.
\end{enumerate}
\end{Lemma}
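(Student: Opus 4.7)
The three statements are tightly interlocked, so I would prove them in the order (1$\Rightarrow$), (2$\Rightarrow$), (2$\Leftarrow$), (3), obtaining (1$\Leftarrow$) as a byproduct of (2$\Leftarrow$) and (3).

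The necessity parts are short and essentially algebraic. For (1)$\Rightarrow$, I would integrate the ODE \eqref{eq:phifT} over $[0,T]$: since $\dot u(0)=\dot u(T)$, the antiderivative of the left-hand side vanishes, giving $\int_0^T f\,dt=0$. For (2)$\Rightarrow$, I would exploit the key translation identity
\[
J(u+v_0) - J(u) = \Bigl\langle v_0,\int_0^T f(t)\,dt\Bigr\rangle, \quad \forall\, v_0\in\mathbb{R}^2,
\]
which follows directly from $\psi(u+v_0)=\psi(u)$. If $\int_0^T f\neq 0$, choosing $v_0$ parallel to $-\int_0^T f$ with arbitrarily large norm drives $J$ to $-\infty$, so no minimum can exist. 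The same identity shows that, whenever $\int_0^T f=0$, the sets of solutions of \eqref{eq:phifT} and of minima of $J$ are automatically invariant under adding constants, giving the structure $\{u_0+c:c\in\mathbb{R}^2\}$ in both cases (uniqueness up to a constant being postponed to the end via strict convexity of $F$).

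For (2)$\Leftarrow$, assume $\int_0^T f=0$ and use the direct method. A minimizing sequence $\{u_n\}$ can be normalized to have zero mean; since $\psi(u_n)$ is bounded and $\psi$ is $+\infty$ outside $D_\psi$, we have $\|\dot u_n\|_\infty\le c$, and together with zero mean this gives a uniform $W^{1,\infty}$ bound. Ascoli--Arzel\`a yields uniform convergence of a subsequence to some $u\in D_\psi$. The linear part of $J$ is continuous for uniform convergence, and the lower semicontinuity of $\psi$ with respect to uniform convergence (established in Proposition~\ref{pro:assumptionI} via \eqref{weaklsc}) then gives $J(u)\le\liminf J(u_n)$, so $u$ is a minimum.

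The heart of the proof is (3). Convexity of $F$ makes $J$ convex, so critical points in the sense of Definition~\ref{def:puntocritico} coincide with global minima. For the implication ``solution $\Rightarrow$ minimum'', the momentum $p(t):=m\dot u(t)/\sqrt{1-|\dot u(t)|^2/c^2}$ is an antiderivative of $f\in L^1(0,T)$, hence absolutely continuous and bounded; solving for $\dot u$, this forces $\|\dot u\|_\infty<c$ \emph{strictly}. Combined with the convexity inequality $F(\xi)\ge F(\dot u)+\langle\nabla F(\dot u),\xi-\dot u\rangle$ integrated over $[0,T]$, and the identity $\int_0^T\langle p,\dot v-\dot u\rangle\,dt=-\int_0^T\langle f,v-u\rangle\,dt$ (integration by parts, using periodicity), we get $J(v)\ge J(u)$ for every $v\in D_\psi$. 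The converse ``minimum $\Rightarrow$ solution'' is the delicate part: it is formally a direct differentiation of $J$ along variations $u+\lambda w$, but this requires $\|\dot u\|_\infty<c$ strictly, otherwise the one-sided derivative of $\psi$ may fail to be linear in $w$ and one cannot extract the pointwise ODE. This strict relativistic bound at a minimizer is the main obstacle, and I would invoke it from \cite{BeJeMa11,BeMa08}; once in hand, $\psi$ is Gateaux-differentiable at $u$, the Euler--Lagrange equation for $J$ is exactly \eqref{eq:phifT}, and the argument closes. Sufficiency in (1) then follows: given $\int_0^T f=0$, (2)$\Leftarrow$ provides a minimum $u_0$ of $J$, which by (3) solves \eqref{eq:phifT}. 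Uniqueness up to a constant in both (1) and (2) is obtained from the strict convexity of $F$, which forces $\dot u$ to be unique.
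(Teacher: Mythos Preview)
Your proof is correct, but the logical organization differs from the paper's in an instructive way. The paper establishes (1) first by a direct ODE argument: integrating twice, solvability reduces to showing that the equation $\int_0^T \varphi^{-1}\bigl(d + \int_0^s f\bigr)\,ds = 0$ has a unique solution $d\in\mathbb{R}^2$, which is the content of \cite[Lemma~2]{BeMa08}. With solutions in hand, the implication ``solution $\Rightarrow$ minimum'' in (3) follows from convexity exactly as you do, and then existence of minima in (2) and the converse ``minimum $\Rightarrow$ solution'' are obtained \emph{for free}: by strict convexity any two minima differ by a constant, so every minimum is a translate of the solution already found, hence itself a solution.

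You reverse the flow: you get existence of minima first via the direct method (using \eqref{weaklsc}), and then face the genuinely harder step of differentiating $J$ at a minimum, which forces you to import the strict bound $\|\dot u\|_\infty<c$ from \cite{BeJeMa11,BeMa08}. Both routes are sound; the paper's ordering is slightly more economical because it never needs to argue the strict bound at an abstract minimizer---that bound comes automatically for solutions, and every minimum is shown to be a solution by the structural argument rather than by differentiation. Your approach, on the other hand, keeps the argument entirely within the variational framework and makes explicit where the delicate regularity step lies.
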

\begin{proof}
It is convenient to introduce the following notation. Define, for $s \in \mathbb{R}^2$ with $\vert s \vert < c$,
$$
\varphi(s) = \frac{m s}{\sqrt{1-|s|^{2}/c^{2}}},
$$
so that the equation in \eqref{eq:phifT} becomes $\frac{d}{dt} (\varphi(\dot u )) = f(t)$.
It is readily verified that $\varphi$ is a global homeomorhism of $B_c(0) \subset \mathbb{R}^2$ onto $\mathbb{R}^2$ and, moreover, 
$$
\nabla F(s) = \varphi(s), \quad \forall \ s \in B_c(0),
$$
where $F$ is defined in \eqref{def-f}.
We are now in a position to prove the various parts of the statement.
\begin{enumerate}
\item Integrating the equation and using the fact that $\varphi$ is a global homeomorhism of $B_c(0)$ onto $\mathbb{R}^2$, we find
$$
\dot u(t) = \varphi^{-1} \left( \varphi(\dot u(0)) + \int_0^t f(s)\, ds \right), \quad \forall \ t \in [0,T].
$$
Hence, since it has to be $\dot u(0) = \dot u(T)$, the condition $\int_0^T f(t)\, dt = 0$ is easily seen to be necessary for the solvability of problem \eqref{eq:phifT}.

\noindent
On the contrary, suppose that such a condition is satisfied. A further integration yields
$$
u(t) = u(0) + \int_0^t \varphi^{-1} \left( \varphi(\dot u(0)) + \int_0^s f(\tau)\, d\tau \right) ds, \quad \forall \ t \in [0,T].
$$
Therefore, $u$ is a solution of \eqref{eq:phifT} if and only if $u(0) = u(T)$, that is
$$
\int_0^T \varphi^{-1} \left( \varphi(\dot u(0)) + \int_0^s f(\tau)d\tau \right)\, ds = 0,
$$
and, in such a case, the function $\tilde u = u + c$ is still a solution for every $c \in \mathbb{R}^2$. To conclude the proof, we thus need to show that, for any fixed $f \in L^1(0,T)$ with $\int_0^T f = 0$, the equation
$$
\int_0^T \varphi^{-1} \left( d + \int_0^s f(\tau)\, d\tau \right) ds = 0, \quad d \in \mathbb{R}^2,
$$
admits a unique solution: this is proved in \cite[Lemma 2]{BeMa08} (see also \cite[Example 2]{BeMa08}).
\item We first notice that, if $\int_0^T f(t)\, dt \neq 0$, then
$$
J(c) = \left\langle \int_0^T f(t)\, dt, c \right\rangle 
$$
for any constant function $u(t) \equiv c \in \mathbb{R}^2$; hence, the functional $J$ is unbounded below (and above). 
Then, $\int_0^T f(t)\, dt =0$ is a necessary condition for the existence of global minima of $J$. 

\noindent
Second, we prove that if $u_1,u_2$ are both global minima of $J$, then $\dot u_1 = \dot u_2$ a.e. on $[0,T]$. Indeed, if $\dot u_1 \neq \dot u_2$ on a positive measure set $S \subset [0,T]$, then from the strict convexity of $F$ it holds
$$
F(\lambda \dot u_1(t) + (1-\lambda) \dot u_2(t)) \leq \lambda F(\dot u_1(t)) + (1-\lambda) F(\dot u_2(t)), \quad \text{ a.e. in } [0,T],
$$
with strict inequality when $t \in S$. Then
$$
J(\lambda u_1 + (1-\lambda) u_2) < \lambda J(u_1) + (1-\lambda) J(u_2),
$$
contradicting the fact that $u_1,u_2$ are global minima. From $\dot u_1 = \dot u_2$ a.e, it follows of course that $u_2 - u_1 \equiv c$ for some $c \in \mathbb{R}^2$, proving the desired characterization of the set of global minima. 

\noindent
It thus remains to prove the existence of global minima when the condition $\int_0^T f(t)\, dt =0$ is satisfied. For this, we refer to the next point 3, showing in fact that solutions of \eqref{eq:phifT} (which exist, by point 1, in such a case) are global minima of $J$

\item Let $u$ be a solution of \eqref{eq:phifT} and take $v \in X$. We can assume $\Vert \dot v \Vert_\infty \leq c$, otherwise the conclusion is obvious; recalling that $\nabla F(s) = \varphi(s)$, the convexity of $F$ gives
$$
F(\dot v(t)) \geq F(\dot u(t)) + \langle \varphi(\dot u(t)), \dot v(t) - \dot u(t)\rangle, \quad \text{ a.e. in } [0,T].
$$
Then, using the above equality together with the fact that $u$ solves \eqref{eq:phifT}, we obtain
\begin{align*}
J(v) - J(u) &= \int_0^T \big( F(\dot v(t)) - F(\dot u(t)) + \langle f(t), v(t) - u(t) \rangle \big)\, dt \\
& \geq \int_0^T \big(  \langle \varphi(\dot u(t)), \dot v(t) - \dot u(t) \rangle + \langle f(t), v(t) - u(t) \rangle\big)\, dt \\
& =  \int_0^T \left(  -\left\langle \frac{d}{dt} \left( \varphi(\dot u(t))\right), v(t)-u(t) \right\rangle + \langle f(t), v(t) - u(t) \rangle\right)\, dt = 0,
\end{align*}
proving that $u$ is a global minimum of $J$.

\noindent
The fact that any global minimum of $J$ is a solution of \eqref{eq:phifT} is now an obvious consequence of what we have proved so far.
Indeed, let us take $\bar{u}$ solution of \eqref{eq:phifT} and, thus, global minimum of $J$. Since any other global minimum $u$ of $J$ is of the form $u = \bar{u} + c$ for some $c \in \mathbb{R}^2$, we immediately see that $u$ is a solution of \eqref{eq:phifT}, as well.
\end{enumerate}

\end{proof}

\noindent
We are now in a position to provide the proof of Proposition \ref{pro:criticisoluzioni}.

\begin{proof}[Proof of Proposition \ref{pro:criticisoluzioni}]
Let $ x\in D_{I} $ be a critical point of $I$ and define
\[
f(t) = -\alpha\frac{x(t)}{|x(t)|^{3}} + \nabla_{x}U(t,x(t)),\quad \forall \ t\in [0,T].
\]
Since $\nabla_{x}U$ is $L^1$-Carath\'eodory, we deduce that $f\in L^{1}(0,T)$.

\noindent
Recalling \eqref{differenziale}, the fact that $x$ is a critical point of $I$ reads as 
$$
\int_0^T \langle f(t),u(t) - x(t) \rangle \, dt + \psi(u) - \psi(x) \geq 0, \quad \forall \ u \in X,
$$
which means that $x$ is a global minimum of the functional $J$ defined in \eqref{eq:J}.

\noindent
Hence, Lemma~\ref{lem:aux} applies and the proposition is proved.
\end{proof}

\begin{Remark}
Let us mention that the converse of Proposition \ref{pro:criticisoluzioni} also holds true: that is, $T$-periodic 
solutions of \eqref{eq:kepleroforzato2} are critical points of the functional $I$. This however can be proved with standard arguments, since the functional $I$ is smooth on the interior of its domain $D_I$.
\end{Remark}

\subsubsection{Homotopy classes of paths} \label{subsec:hom}

Since we are interested in finding periodic solutions with prescribed winding number, for any integer $k$ we define the set
\begin{equation}\label{def:homclass}
\Lambda_k = \left\{ x \in \Lambda : \textrm{i}(x) = k\right\},
\end{equation}
where $\textrm{i}(x)$ is the winding number of $x = (x_u,x_v)$ as a closed path in $\mathbb{R}^2 \setminus \{0\}$, that is
$$
\textrm{i}(x) = \frac{1}{2\pi} \int_x \frac{u dv - v du}{u^2 + v^2} =  \frac{1}{2\pi} \int_0^T \frac{x_u(t) \dot x_v(t)  - x_v(t) \dot x_u(t)}{x_u(t)^2 + x_v(t)^2} \,dt. 
$$
Some useful properties of the set $\Lambda_k$ are collected in the next lemma.
\begin{Lemma}\label{proplambdak}
The following hold true:
\begin{itemize}
\item[(1)] $\Lambda_k$ is open with respect to the topology of uniform convegence, and so, a fortiori, $\Lambda_k$ is an open subset of $X$; in particular, for any $x \in \Lambda_k$, it holds that
\begin{equation}\label{rouche}
\{ y \in X : \Vert y - x \Vert_\infty < \min_t \vert x(t) \vert\} \subset \Lambda_k.
\end{equation}
\item[(2)] $\Lambda_k \cap D_\psi \neq \emptyset$ and, for $k \neq 0$,  
\begin{equation}\label{stima_contr}
\Vert x \Vert_\infty \leq cT, \quad \text{ for every } x \in \Lambda_k \cap D_\psi.
\end{equation}
\end{itemize}
\end{Lemma}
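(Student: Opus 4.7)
For part (1), the natural approach is a Rouché-type homotopy argument. Fix $x\in\Lambda_{k}$ and let $\delta=\min_{t}|x(t)|>0$; given $y\in X$ with $\|y-x\|_{\infty}<\delta$, consider the linear homotopy $H(s,t)=(1-s)x(t)+sy(t)$ on $[0,1]\times[0,T]$. One has
\[
|H(s,t)|\geq |x(t)|-s\,|y(t)-x(t)|\geq \delta-\|y-x\|_{\infty}>0,
\]
so $H$ is a continuous family of $T$-periodic loops in $\RR^{2}\setminus\{0\}$, and homotopy invariance of the winding number yields $\textrm{i}(y)=\textrm{i}(x)=k$, i.e.\ $y\in\Lambda_{k}$. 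This proves the inclusion~\eqref{rouche} and hence the openness of $\Lambda_{k}$ with respect to the topology of uniform convergence; openness in $X=W^{1,\infty}_{T}$ is then immediate, since $\|\cdot\|_{\infty}\leq\|\cdot\|$.

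For part (2), nonemptiness is handled by an explicit construction: for $k\neq 0$ the circular loop $x(t)=r\bigl(\cos(2\pi k t/T),\sin(2\pi k t/T)\bigr)$ with any $r\in(0,cT/(2\pi|k|)]$ lies in $\Lambda_{k}\cap D_{\psi}$; for $k=0$ any nonzero constant loop works. For the bound~\eqref{stima_contr}, let $x\in\Lambda_{k}\cap D_{\psi}$ with $k\neq 0$ and pick $t_{0}\in[0,T]$ realizing $\|x\|_{\infty}=|x(t_{0})|$. Since $x$ never vanishes, it admits a continuous argument lift $\theta\colon[0,T]\to\RR$ with $\theta(T)-\theta(0)=2\pi k$. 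As $|k|\geq 1$, the image of $\theta$ contains an interval of length $\geq 2\pi$, so by the intermediate value theorem there exists $t_{1}\in[0,T]$ with $\theta(t_{1})\equiv\theta(t_{0})+\pi\pmod{2\pi}$. Then $x(t_{0})$ and $x(t_{1})$ lie on opposite rays from the origin, hence
\[
|x(t_{0})|\leq |x(t_{0})|+|x(t_{1})|=|x(t_{0})-x(t_{1})|\leq c\,|t_{0}-t_{1}|\leq cT,
\]
where the penultimate inequality uses $\|\dot x\|_{\infty}\leq c$.

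No real obstacle is expected: the argument is elementary once one recognizes that the winding number controls both Rouché-style stability (giving openness via the linear homotopy) and the existence of antipodal points on the orbit (forcing the uniform bound via the Lipschitz estimate). The only minor subtlety is to locate $t_{1}$ inside the fundamental interval $[0,T]$ rather than on a periodic extension, which is automatic as soon as $|k|\geq 1$.
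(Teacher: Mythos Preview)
Your proof is correct. Part~(1) is the standard Rouché argument, which the paper also invokes (merely declaring it well known); you have just filled in the details.

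For part~(2), your construction of the circular loop matches the paper's. For the bound~\eqref{stima_contr}, however, you take a genuinely different route. The paper argues by contradiction: if $|x(\bar t)|>cT$ for some $\bar t$, then the Lipschitz estimate $|x(t)-x(\bar t)|\leq cT$ traps the entire orbit in the closed ball $\overline{B}_{cT}(x(\bar t))$, which misses the origin; since the ball is contractible, the winding number must vanish, contradicting $k\neq 0$. Your argument is direct: using the continuous angle lift and the fact that its image has length at least $2\pi$, you locate a point $x(t_1)$ on the ray opposite to $x(t_0)$, so that $|x(t_0)|\leq |x(t_0)|+|x(t_1)|=|x(t_0)-x(t_1)|\leq cT$. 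Both approaches exploit the nontrivial winding together with the Lipschitz bound, but yours is constructive and avoids the contradiction; the paper's is perhaps slightly more topological in flavor (contractibility rather than an explicit antipodal point). Either works without difficulty.
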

\begin{proof}
(1) The assertion, as well as the inclusion \eqref{rouche} (coming from the so-called Rouché's property of the winding number), is well known. \medskip \\
(2) It is easily checked that the path 
$$
\bar{x}(t) := \rho e^{\frac{2\pi k t}{T}i}, \qquad t \in [0,T],
$$
belongs to $\Lambda_k \cap D_\psi$ as long as $0 < \rho \leq cT /(2\pi k )$, showing that $\Lambda_k \cap D_\psi \neq \emptyset$.

\noindent 
To prove \eqref{stima_contr}, we suppose on the contrary that $|x(\bar{t})| > cT$ for some $\bar{t} \in [0,T]$.
Then, for every $t \in [0,T]$ we find
$$
|x(t) - x(\bar{t}) | \leq \left\vert \int_{t}^{\bar{t}} \dot x(s)\, ds \right\vert \leq cT,
$$
so that $x([0,T]) \subset \overline{B}_{cT}(x(\bar{t})) \subset \mathbb{R}^2 \setminus \{0\}$.
Since the ball is conctractible, we have thus reached a contradiction with the fact that $x \in \Lambda_k$ with $k \neq 0$.
\end{proof}

\subsection{The first solution: minimisation}\label{sec3.2}
Let us fix $k \neq 0$. We are going to prove that the minimization problem
\begin{equation}\label{def_min}
\min_{x \in \Lambda_k \cap D_\psi} I(x)
\end{equation}
has a solution, that is, there exists $\bar{x} \in \Lambda_k \cap D_\psi$ such that
$$
I(\bar{x}) \leq I(x), \quad \text{ for every } x \in \Lambda_k \cap D_\psi.
$$
Since, by Lemma \ref{proplambdak} the set $\Lambda_k$ is open in $X$ and $I(x) = +\infty$ if $x \notin D_\psi$, we easily see that $\bar{x}$ is a local minimum of $I$. Then, by Proposition \ref{pro:minimoPScritico} $\bar{x}$ is a critical point of $I$ and Proposition \ref{pro:criticisoluzioni} finally implies that $\bar{x}$ is a $T$-periodic solution to \eqref{eq:kepleroforzato2}, with winding number equal to $k$.

\noindent
So, let us define
$$
m_k = \inf_{x \in \Lambda_k \cap D_\psi} I(x)
$$ 
and consider a minimizing sequence $\{ x_n \} \subset \Lambda_{k} \cap D_{\psi}$, that is, $I(x_n) \to m_k$.
Since $\Vert x_n \Vert_\infty \leq cT$ by estimate \eqref{stima_contr} and, moreover, $\Vert \dot x_n \Vert_\infty \leq c$, 
the sequence $\{x_n\}$ is bounded in $X$. Hence, using the Ascoli-Arzelà theorem as in the proof of the fourth point of Proposition \ref{pro:assumptionI}, we infer the existence of $\bar{x} \in D_\psi$ such that, up to subsequence, $x_n \to \bar{x}$ uniformly on $[0,T]$; moreover, by \eqref{weaklsc},
\begin{equation}\label{stima1}
\psi(\bar{x}) \leq \liminf_{n \to +\infty}\psi(x_n).
\end{equation}
Similarly as in the proof of Proposition \ref{pstype}, we have that $\bar{x} \notin \partial D_{\Phi}$, since otherwise, by  Assumption~\ref{ass:I}-4, it would be
$I(x_n) = \psi(x_n) + \Phi(x_n) \to +\infty$ contrarily to $I(x_n) \to m_k$.
Thus, recalling that $\partial D_{\Phi} = X \setminus \Lambda$, we conclude that $\bar{x} \in \Lambda$ and, by \eqref{stima1} together with the smoothness of $\Phi$ on $\Lambda$, $$
I(\bar{x}) \leq \liminf_{n \to +\infty} I(x_n) = m_k.
$$
Since $x_n \to \bar{x}$ uniformly and $\Lambda_k$ is open w.r.t. the topology of uniform convergence, $\bar{x} \in \Lambda_k$, as well. Then, $\bar{x} \in \Lambda_k \cap D_\psi$ is a solution of the minimization problem \eqref{def_min}, as desired.

\subsection{The second solution: a min-max argument}\label{sec3.3}
Let $\bar{x} \in \Lambda_k \cap D_\psi$, for $k\neq 0$, be a solution of the minimization problem \eqref{def_min} considered in the previous section. We are going to apply the min-max principle given in Theorem \ref{thm:minmax}. To this end, we define
$$
B = \{\bar x \}
$$
and
$$
\mathcal{F} = \{ \gamma([0,1]) : \gamma \in C(\mathbb{T}^1,\Lambda_k),\; \gamma \textrm{ is not nullhomotopic and } \gamma(0) = \bar{x}\},
$$
that is, the set of the images of all non nullhomotopic loops based at $\bar{x}$ in the topological space $\Lambda_k$ (here $\mathbb{T}^1$ is meant as the
quotient space $\mathbb{R}/\mathbb{Z}$, so that $\gamma(0) = \gamma(1) = \bar{x}$). Of course $B$ is closed in $X$ and $\mathcal{F}$ is a family of compact sets of $X$. Notice that $\mathcal{F}$ is non empty since the compact set
\begin{equation}\label{defbara}
\bar{A} :=\bar{\gamma}([0,1]), \quad \textrm{ with } \bar{\gamma}(s) = \bar{x}(T s+\cdot) \in \Lambda_k, 
\end{equation}
belongs to $\mathcal{F}$. 
We now verify the other assumptions of Theorem \ref{thm:minmax}.

\noindent
First, we prove that $\mathcal{F}$ is homotopy stable with extended boundary $B$. So, let us consider $ A = \gamma([0,1]) \in \mathcal{F} $ and a continuous function $ \eta: [0,1]\times X \to X$ such that 
$$ 
\eta(t,x)=x, \quad \forall (t,x)\in (\{0\}\times X)\cup ([0,1]\times B)  \qquad \text{ and } \qquad
\eta([0,1]\times A)\subset D_{\Phi}.
$$ 
Preminarily, we observe that, for any $s \in [0,1]$, 
the function $t \mapsto \eta(t,\gamma(s)) \in D_{\Phi}$ provides an homotopy of the path $\gamma(s)$ (which is obtained at $t = 0$) in the punctured plane $\RR^{2}\setminus\{0\}$: since $\gamma(s) \in \Lambda_k$, by the homotopy invariance of the winding number it has to be $\eta(t,\gamma(s)) \in \Lambda_k$ for any $t \in [0,1]$. Now, let us set $\gamma_t(s) = \eta(t,\gamma(s))$ for every $(t,s) \in [0,1]\times[0,1]$. It is easily checked that, for every $t \in [0,1]$,  
$\gamma_t$ is a loop of base $\bar{x}$ in the topological space $\Lambda_k$. Moreover, by construction all these loops are homotopic:
since $\gamma_0 = \gamma$ is not nullhomotopic, we deduce that $\gamma_1$ is not nullhomotopic as well. 
We have thus proved that $\eta(1,A) = \eta(1,\gamma([0,1]) \in \mathcal{F}$, as desired.

\noindent
Second, recalling the definition of $\bar{A} \in \mathcal{F}$ given above, we have
$$
c:= \inf_{A \in \mathcal{F}}\sup_{x \in A} I(x) \leq \sup_{x \in \bar{A}} I(x) = \sup_{s \in [0,1]} I(\bar{x}(sT + \cdot)) < +\infty
$$
since $\psi(\bar{x}(sT + \cdot)) = \psi(\bar{x})$ and $\Phi(\bar{x}(sT + \cdot))$ is bounded, due to the fact that the functions
$\bar{x}(sT + \cdot)$ are uniformly bounded and uniformly bounded away from the singularity.

\noindent
Finally, we define
$$
F = \{x \in X : \Vert x - \bar{x} \Vert_\infty = r\},
$$
with $r \in (0,\min_t |\bar{x}(t)|)$ arbitrarily chosen. We stress that, in the above formula, the sup norm $\Vert \cdot \Vert_\infty$ is considered, even if the ambient space is $X = W^{1,\infty}_T$; since uniform convergence is implied by convergence in $X$, the set $F$ is closed in $X$ Let us also notice that, by the Rouché's property \eqref{rouche}, the set $\{x \in X :  \Vert x - \bar{x} \Vert_{\infty} \leq r\}$ is entirely contained in $\Lambda_k$; hence we have
\[
\sup_B I = I(\bar{x}) \leq \inf_F I.
\]
We claim that $A \cap F \neq \emptyset$ for every $A = \gamma([0,1]) \in \mathcal{F}$. Indeed,
if otherwise $\Vert \gamma(s) - \bar{x} \Vert_{\infty} < r$ for every $s \in [0,1]$, the convex deformation
$\nu(\lambda,s) = \lambda \bar{x} + (1-\lambda) \gamma(s)$ (with $\lambda \in [0,1]$) would imply that $\gamma$ is homotopic in $\Lambda_k$ to the constant loop of base $\bar{x}$, against the assumption.
Since $F \cap B = \emptyset$, the condition $(A \cap F) \setminus B\neq\emptyset$ of Theorem \ref{thm:minmax} thus follows.

\noindent
Summing up, we can apply Theorem \ref{thm:minmax}. Thus, fixed a sequence $\{A_n \}$ in $\mathcal{F}$ such that
$\sup_{A_n} I \to c$ we can find a PS-sequence $\{x_n \}$ in $X$, at level $c$, such that
$\dist(x_n,A_n) \to 0$; moreover, if $\inf_F I = c$, then $\dist(x_n, F) \to 0$. Now, let us recall that $A_n \subset \Lambda_k$ and observe that, since $I(x) = +\infty$ if $x \notin D_{\psi}$ and $\sup_{A_n} I \to c$, it must be $A_n \subset D_{\psi}$, as well. Hence, $A_n \subset D_{\Phi} \cap D_{\psi}$, so that the fourth condition in Assumption~\ref{ass:I} implies that 
$\dist(A_n,\partial \Lambda_k)$ remains bounded away from zero.
Hence, from $\dist(x_n,A_n) \to 0$ we can infer that $x_n \in \Lambda_k$ if $n$ is large enough.
Using the inequality \eqref{stima_contr}, we thus find that $\{x_n\}$ is bounded.
Then, Proposition \ref{pstype} applies, providing the existence of a critical point $x^*$ of $I$ at level $c$, obtained as the uniform limit of a subsequence of $\{x_n\}$.
By the first assertion in Lemma \ref{proplambdak}, $x^* \in \Lambda_k$.

\noindent
If $\inf_F I < c$, since $I(\bar{x}) \leq \inf_F I \leq c = I(x^*)$ by assumptions 2 and 3 of Theorem~\ref{thm:minmax} (see its proof in the appendix), we get $x^* \neq \bar{x}$. If, otherwise, $\inf_F I = c$, then we also know that $\dist(x_n, F) \to 0$. Then, since 
$\dist_{\infty}(x_n,F) := \inf_{y \in F} \Vert x_n - y \Vert_\infty \leq \dist(x_n, F)$, we have 
$\dist_{\infty}(x_n,F) \to 0$.
On the other hand, $\dist_{\infty}(x_n,F) \to \dist_{\infty}(x^*,F)$ and so $\dist_{\infty}(x^*,F) = 0$.
Hence
$$
r = \dist_{\infty}(\bar{x},F) \leq \Vert \bar{x} - x^* \Vert_\infty + \dist_{\infty}(x^*,F) = \Vert \bar{x} - x^* \Vert_\infty,
$$
implying $x^* \neq \bar{x}$. Hence, in both the cases, a second $T$-periodic solution to \eqref{eq:kepleroforzato2}, with winding number equal to $k$, is found.

\begin{Remark}\label{rem:continuo}
We observe that the equality $I(\bar{x}) = c$ surely holds for the autonomous equation
$$
\frac{d}{dt}\left(\frac{m\dot{x}}{\sqrt{1-|\dot{x}|^{2}/c^{2}}}\right) =
-\alpha\frac{x}{|x|^{3}} + \nabla U(x).
$$
Indeed, in this case the functional $I$ is invariant under time-translation and hence, recalling the definition in \eqref{defbara}, 
$$
c= \inf_{A \in \mathcal{F}}\sup_{x \in A} I(x) \leq \sup_{x \in \bar{A}} I(x) = I(\bar{x}).
$$
In this situation the functional $I$ possesses the continuum of local minima $\bar{A}$.
\end{Remark}
\begin{Remark}\label{Ncentri}
With minor modifications of the arguments used in this section, Theorem \ref{thmain} could be extended to more general equations. For instance, it is possible to obtain a similar result for a forced relativistic $N$-center problem of the type 
\begin{equation}\label{eq:ncentri}
\frac{d}{dt}\left(\frac{m\dot{x}}{\sqrt{1-|\dot{x}|^{2}/c^{2}}}\right) =
-\sum_{j=1}^N \alpha_j \frac{x-\xi_j}{|x-\xi_j|^{3}} + \nabla_{x}U(t,x), \qquad x \in \RR^{2}\setminus\{\xi_1, \ldots, \xi_N\},
\end{equation}
where $\xi_1,\ldots \xi_N$ are fixed points in $\RR^2$ and $\alpha_j>0$, for every $j=1,\ldots, N$ (clearly, equation \eqref{eq:ncentri} reduces to \eqref{eq:kepleroforzato2} for $N=1$ and $\xi_1=0$). In this more general setting, the existence of $T$-periodic solutions can be proved in any non-trivial homotopy class $\Gamma \in \pi_{1}(\RR^{2}\setminus\{\xi_{1},\dots,\xi_{n}\})$ such that $\Gamma \cap D_\psi \neq \emptyset$. Notice that, in the case of $N \geq 2$ centers, this requirement is essential: indeed, the natural bound $|\dot{x}|\le c$ for any $x\in D_{\psi}$ implies that 
\[
\max_{t,s \in [0,T]} |x(t)-x(s)|\leq cT,
\]
and, thus, solutions $x$ to problem \eqref{eq:ncentri} cannot belong to a class $\Gamma$ containing loops with
non-zero winding number around $\xi_{i}$ and $\xi_{j}$, if $|\xi_{i}-\xi_{j}|>cT$.
\end{Remark}
\section{The unforced problem: minimal solutions}\label{sec4}
In this section we deal with the unforced problem 
\begin{equation}\label{eq:keplero}
\frac{d}{dt}\left(\frac{m\dot{x}}{\sqrt{1-|\dot{x}|^{2}/c^{2}}}\right) =
-\alpha\frac{x}{|x|^{3}}, \qquad x \in \RR^{2}\setminus\{0\},
\end{equation}
with the final goal of giving the proof of Theorem \ref{thmain2}.
More precisely, in Section \ref{subsec:solexistence} we first give a complete description of the set of circular and non-circular $T$-periodic solutions of \eqref{eq:keplero} with winding number $k$, for an arbitrary $T > 0$ and an arbitrary integer $k \neq 0$. Then, in Section \ref{subsec:rosette} and Section \ref{subsec:circ} we provide explicit formulas for the action level of non-circular solutions and for the Morse index of the circular solutions, respectively. Finally, using these preliminary results, in Section \ref{subsec:minimalita} we give a more precise statement of Theorem \ref{thmain2} and we provide its proof.

\noindent
Due to the invariances of \eqref{eq:keplero}, in what follows the uniqueness of a periodic solution
will always be meant up to time translations and space rotations around the origin.
Moreover, without loss of generality we consider only solutions having positive winding number $k$ around the origin (equivalently, with positive angular momentum, see \eqref{eq:thetaprimo}), that is, solutions belonging to the set $\Lambda_k$ defined in \eqref{def:homclass}, with $k \geq 1$.

\subsection{Circular and non-circular periodic solutions}\label{subsec:solexistence}

As we will see, problem \eqref{eq:keplero} can have circular or non-circular periodic solutions. 
As far as circular solutions are concerned, we notice that a circular $T$-periodic solution belongs to $\Lambda_k$ if and only if its minimal period is $T/k$. With this in mind, it is immediate to prove the following result.
\begin{Proposition} \label{pro:circolari}
	For every $T>0$ and $k\in \NN$, with $k\geq 1$, problem \eqref{eq:keplero} possesses a unique circular $T$-periodic solution $x_C\in \Lambda_k$, given by
	\begin{equation} \label{eq:solcirc}
	x_C(t)=R\, e^{i\omega t},\quad R=\dfrac{L\sqrt{L^2c^2-\alpha^2}}{m\alpha c},\ \omega=\dfrac{m\alpha^2c}{L^2\sqrt{L^2c^2-\alpha^2}},
	\end{equation}
	where (the angular momentum) $L>0$ is the solution of 
	\begin{equation} \label{eq:valoreL}
	L^2 \sqrt{L^2c^2-\alpha^2}=\dfrac{m\alpha^2 c}{2\pi k}\, T.
	\end{equation}
\end{Proposition}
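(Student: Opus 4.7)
The plan is to search for circular solutions via the ansatz $x(t) = Re^{i\omega t}$ and reduce everything to an algebraic system in the two parameters $R$ and $\omega$. First, up to time-translation and space-rotation any circular orbit in $\Lambda_k$ may be written $x(t) = Re^{i\theta(t)}$ with $R > 0$ constant. Since $|\dot x| = R|\dot\theta|$ is then also constant, the relativistic factor appearing in \eqref{eq:keplero} factors out of the time derivative; the tangential component of the equation forces $\ddot\theta \equiv 0$, so $\theta(t) = \omega t + \phi_{0}$, and $T$-periodicity with winding number $k \geq 1$ fixes $\omega = 2\pi k/T > 0$. The radial component then collapses to the single identity
\begin{equation*}
\frac{mR\omega^{2}}{\sqrt{1 - R^{2}\omega^{2}/c^{2}}} = \frac{\alpha}{R^{2}}.
\end{equation*}

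Second, I would change parameters by introducing the angular momentum $L \vcentcolon= mR^{2}\omega/\sqrt{1 - R^{2}\omega^{2}/c^{2}}$. A direct manipulation, isolating $\sqrt{1 - R^{2}\omega^{2}/c^{2}}$ from the radial equation and from the definition of $L$ and eliminating it, gives exactly the explicit formulas \eqref{eq:solcirc} for $R$ and $\omega$ in terms of $L$. The physical subluminal constraint $R\omega < c$ translates into $L > \alpha/c$, which is precisely the range on which the expressions in \eqref{eq:solcirc} are well defined. Imposing $\omega = 2\pi k/T$ on the expression for $\omega$ in \eqref{eq:solcirc} then immediately produces equation \eqref{eq:valoreL}.

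Third, existence and uniqueness of the circular $T$-periodic solution in $\Lambda_{k}$ reduce to existence and uniqueness of $L \in (\alpha/c,+\infty)$ solving \eqref{eq:valoreL}. The map $L \mapsto L^{2}\sqrt{L^{2}c^{2}-\alpha^{2}}$ is continuous and strictly increasing on $[\alpha/c,+\infty)$, vanishes at $L = \alpha/c$, and tends to $+\infty$ as $L \to +\infty$; hence for every $T > 0$ and $k \geq 1$ the equation \eqref{eq:valoreL} has exactly one solution, yielding through \eqref{eq:solcirc} exactly one admissible pair $(R,\omega)$ and thus one circular solution. The whole argument is elementary algebra combined with a monotonicity/intermediate-value argument; the only point that deserves some care is checking that the map $L \mapsto (R(L),\omega(L))$ is genuinely a bijection onto the set of admissible $(R,\omega)$ (with $R\omega \in (0,c)$), so that uniqueness in $L$ faithfully transfers to uniqueness of the circular solution up to time-translation and space-rotation.
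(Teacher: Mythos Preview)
Your proof is correct in substance and is the natural direct computation; the paper itself omits the argument entirely, stating only that ``it is immediate to prove the following result.'' One small point deserves tightening: from $|x|=R$ constant you assert ``$|\dot x|=R|\dot\theta|$ is then also constant,'' but this does not follow from $R$ being constant alone --- you need a word of justification, for instance via conservation of the energy $h=mc^{2}/\sqrt{1-|\dot x|^{2}/c^{2}}-\alpha/|x|$, which with $|x|$ fixed forces $|\dot x|$ constant. With that filled in, the rest of your derivation (radial balance, reparametrisation by $L$, and the monotonicity argument on $L\mapsto L^{2}\sqrt{L^{2}c^{2}-\alpha^{2}}$) is complete and correct.
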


\noindent
On the other hand, the existence of non-circular periodic solutions of \eqref{eq:keplero} can be proved by means of a phase-plane analysis, on the lines of the recent paper \cite{BoDaFe22}, which presents a precise description of the solutions, for a given energy and angular momentum.

\noindent
It is important to observe, as it will be clear from the proof of Proposition~\ref{pro:rosette}, that problem \eqref{eq:keplero} does not have non-circular periodic solutions in $\Lambda_1$. Moreover, for $k\geq 2$, non-circular periodic solutions in $\Lambda_k$ can be classified according to the following definition.
\begin{Definition} \label{def:rosette}
	Let $T>0$, $k\in \NN$, with $k\geq 2,$ and $n\in \{1,\ldots, k-1\}$. A non-circular $T$-periodic solution $x$ of \eqref{eq:keplero} is said to be a solution of type $(n,k)$ if $x\in \Lambda_k$ and $|x|$ has minimal period $T/n$. 
\end{Definition}
\noindent
Let us define
\begin{equation} \label{eq:tstar}
T_*= \dfrac{2\pi \alpha}{mc^3}
\end{equation}
and, for every $k\in \NN,\ k\geq 1$
\begin{equation} \label{eq:ukn}
u^k_n=
\begin{cases}
\dfrac{nk^3}{(k^2-n^2)^{3/2}}\, T_*& \mbox{if $n\in \{0,\ldots, k-1\}$}
\vspace{4pt} \\
+\infty & \mbox{if $n=k$.}
\end{cases}
\end{equation}
We are now in position to state the result on the existence of non-circular periodic solutions of \eqref{eq:keplero}.
\begin{Proposition} \label{pro:rosette}
	Let $T>0$, $k\in \NN$, with $k\geq 2,$ and $i_T\in \{0,\ldots, k-1\}$ such that
	\begin{equation} \label{eq:condTrosetta}
	u^k_{i_T}< T\leq  u^{k}_{i_T+1}.
	\end{equation}
	Then:
	\begin{enumerate}
		\item if $i_T=0$ problem \eqref{eq:keplero} does not admit non-circular $T$-periodic solutions in $\Lambda_k$
		\item if $i_T>0$ problem \eqref{eq:keplero} has exactly $i_T$ non-circular $T$-periodic solutions in $\Lambda_k$ and they are of type $(1,k), \ldots, (i_T,k)$
	\end{enumerate}
\end{Proposition}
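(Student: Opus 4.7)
The plan is to exploit the integrability of \eqref{eq:keplero} through polar coordinates $x=re^{i\theta}$ and the two conserved quantities
\[
L = m r^{2}\dot\theta\,\gamma,\qquad E = mc^{2}\gamma-\frac{\alpha}{r},\qquad \gamma=\frac{1}{\sqrt{1-|\dot x|^{2}/c^{2}}}.
\]
Eliminating $\gamma$ and $t$ in favor of the Binet variable $u=1/r$ yields the linear orbit equation
\[
\frac{d^{2}u}{d\theta^{2}}+\omega^{2}u = \omega^{2}u_{0},\qquad \omega^{2}=1-\frac{\alpha^{2}}{L^{2}c^{2}},\quad u_{0}=\frac{E\alpha}{L^{2}c^{2}-\alpha^{2}},
\]
(well-posed as soon as $Lc>\alpha$), whose solutions are the precessing rosettes $1/r(\theta)=u_{0}+A\cos(\omega(\theta-\theta_{0}))$. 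The orbit is bounded and non-degenerate iff $0<A<u_{0}$, equivalently $E_{\mathrm{circ}}(L)<E<mc^{2}$, in which case the apsidal angle (between two consecutive periapses) equals $2\pi/\omega$ and depends only on $L$.

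Next I would translate the prescribed periodic behavior into algebraic conditions on $L$ and $E$. Over one radial period the polar angle advances by $2\pi/\omega$; if the orbit is $T$-periodic, belongs to $\Lambda_{k}$ and $|x|$ has minimal period $T/n$, then $n$ radial periods fit into one temporal period while the angle advances by $2\pi k$, so matching forces the closure condition $\omega=n/k$, equivalently
\[
L = L_{n,k}:=\frac{\alpha k}{c\sqrt{k^{2}-n^{2}}}.
\]
This requires $n\in\{1,\dots,k-1\}$, since $n=0$ corresponds to the circular case (excluded here) while $n\geq k$ gives $\omega^{2}\leq 0$, contradicting $Lc>\alpha$. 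In particular $k=1$ admits no admissible $n$, which proves part (1).

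For $k\geq 2$ and each admissible $n$, the angular momentum is now rigidly fixed to $L_{n,k}$, so only the energy $E\in(E_{\mathrm{circ}}(L_{n,k}),mc^{2})$ remains free. To conclude I would study the temporal period
\[
T(E)=n\, T_{r}(E) = \frac{1}{c^{2}L_{n,k}}\int_{0}^{2\pi k}\frac{E+\alpha\,u(\theta)}{u(\theta)^{2}}\,d\theta,\qquad u(\theta)=u_{0}(E)+A(E)\cos(\omega\theta),
\]
along this one-parameter family and verify that it is a homeomorphism of $(E_{\mathrm{circ}}(L_{n,k}),mc^{2})$ onto $(u^{k}_{n},+\infty)$. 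The boundary values are transparent: as $E\searrow E_{\mathrm{circ}}(L_{n,k})$ the amplitude $A\to 0$ and the orbit degenerates to the circular orbit with angular momentum $L_{n,k}$, whose $T$-periodic version with winding number $k$ has period exactly $u^{k}_{n}$ by a direct substitution of $L=L_{n,k}$ into \eqref{eq:valoreL} of Proposition~\ref{pro:circolari}; as $E\nearrow mc^{2}$ one has $u_{0}-A\to 0$, so $r_{\max}\to +\infty$ and $T(E)\to+\infty$. The main obstacle is the strict monotonicity of $T(E)$: this is the relativistic analogue of the classical statement ``larger energy at fixed angular momentum yields larger semi-major axis, hence longer period,'' and I would obtain it by differentiating the integral expression under the sign after an appropriate affine rescaling of $\theta$, following the phase-plane analysis of \cite{BoDaFe22}. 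Once monotonicity is established, for every $T>0$ a non-circular $T$-periodic solution of type $(n,k)$ exists and is unique (up to time translation and rotation) precisely when $T>u^{k}_{n}$; summing over the admissible $n\in\{1,\dots,k-1\}$ produces exactly $i_{T}$ solutions, of types $(1,k),\dots,(i_{T},k)$, which proves part (2).
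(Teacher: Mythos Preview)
Your overall strategy---polar coordinates, the Binet-type orbit equation, and the closure condition $\omega=n/k$ forcing $L=L_{n,k}$---coincides with the paper's. The difference lies in how the temporal period is handled, and here you are making the argument harder than it needs to be.

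The paper needs no monotonicity argument at all. The key fact, quoted from \cite[Prop.~2.1]{BoDaFe22}, is that the radial period has the closed form
\[
T_{h}=\frac{2\pi\alpha m^{2}c^{3}}{(m^{2}c^{4}-h^{2})^{3/2}},
\]
which depends on the energy $h$ \emph{alone} and not on $L$---a relativistic analogue of Kepler's third law. With this in hand the two periodicity conditions $T=nT_{h}$ and $n\,\Delta\theta=2\pi k$ decouple completely: the first determines $h$ from $T$ and $n$, the second determines $L$ from $n$ and $k$, and one only has to check algebraically whether the resulting pair $(h,L)$ lies in the admissible region $\Sigma$. That check reduces directly to the inequality $T>u^{k}_{n}$. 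Your route, by contrast, fixes $L$ first and then treats the period as a function of $E$ through the integral expression, leaving a monotonicity claim that you only sketch. The claim is true---indeed it is immediate from the displayed formula, which is manifestly increasing in $h$ on $(0,mc^{2})$---but obtaining it by differentiating your integral, where both $u_{0}(E)$ and $A(E)$ vary, would be unpleasant and is not what \cite{BoDaFe22} actually does.

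A minor slip: your sentence ``In particular $k=1$ admits no admissible $n$, which proves part (1)'' does not prove part (1). The proposition already assumes $k\ge 2$; part (1) is the case $i_{T}=0$, i.e.\ $0<T\le u^{k}_{1}$. It follows from your final counting step once you know that a type-$(n,k)$ solution exists iff $T>u^{k}_{n}$ and that $n\mapsto u^{k}_{n}$ is increasing on $\{1,\dots,k-1\}$.
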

\begin{proof}
	The proof is based on the phase-plane analysis given in \cite[Sect. 2]{BoDaFe22}. In that paper the existence of periodic solutions of \eqref{eq:keplero} is studied depending on the values of the energy
	\begin{equation} \label{eq:energia}
	h=\dfrac{mc^2}{\sqrt{1-|\dot{x}|^2/c^2}}-\dfrac{\alpha}{|x|}
	\end{equation}
	and of the angular momentum
	\begin{equation} \label{eq:momento}
	L = \langle x, Jp \rangle, \qquad \text{where } J = \begin{pmatrix} 0 & 1 \\
		-1 & 0\end{pmatrix},\ p=\dfrac{m\dot{x}}{\sqrt{1-|\dot{x}|^2/c^2}}.
		\end{equation}
More precisely, it is shown that non-circular periodic solutions exist if and only if $(h,L)$ belongs to the set
\[
\Sigma=\left\{(h,L)\in \RR^2:\ 0<h<mc^2,\ \dfrac{\alpha^2}{c^2}<L^2 < \dfrac{\alpha^2 m^2 c^2}{m^2c^4-h^2} \right\}.
\]
Moreover, when $(h,L)\in \Sigma$, the radial component $|x|$ of a solution $x$ of \eqref{eq:keplero} is always periodic and its period $T_h$ depends on the energy $h$ according to the relation
\begin{equation} \label{eq:periodorad}
T_h=\dfrac{2\pi \alpha m^2 c^3}{(m^2c^4-h^2)^{\frac{3}{2}}}
\end{equation}
(see \cite[Prop. 2.1]{BoDaFe22}).
On the other hand, the angular component $\theta$ of $x$ has a total variation in the interval $[0,T_h]$ given by
\begin{equation} \label{eq:deltatheta}
\Delta \theta =\dfrac{2\pi}{\sqrt{1-\dfrac{\alpha^2}{L^2c^2}}}
\end{equation}
(see \cite[Formula (2.25)]{BoDaFe22}). 
Observing that $\Delta \theta >2\pi$, it is immediate to conclude that \eqref{eq:keplero} does not admits non-circular periodic solutions in $\Lambda_1$.

\noindent
For $k\geq 2$, from the previous discussion we deduce that $x$ is a $T$-periodic solution of \eqref{eq:keplero} in $\Lambda_k$ if and only if there exist $n\in \{1,\ldots, k-1\}$ and $(h,L)\in \Sigma$ such that
\begin{equation} \label{eq:periodiche}
T=nT_h,\quad n\Delta \theta =2\pi k.
\end{equation}
Notice that \eqref{eq:periodiche} implies that periodic non-circular solutions in $\Lambda_k$ are necessarily solutions of type $(n,k)$, for some $n\in \{1, \ldots, k-1\}$.
Now, recalling \eqref{eq:periodorad} and \eqref{eq:deltatheta}, it is easy to see that \eqref{eq:periodiche} is equivalent to 
\[
m^2c^4-h^4=\left(\dfrac{2\pi \alpha m^2c^3n}{T}\right)^{2/3},\quad L^2=\dfrac{\alpha^2}{c^2}\, \dfrac{k^2}{k^2-n^2}.
\]
From these relations we then obtain that $(h,L)\in \Sigma$ if and only if
\[
\dfrac{\alpha^2}{c^2}\, \dfrac{k^2}{k^2-n^2} < \dfrac{\alpha^{4/3}m^{2/3}T^{2/3}}{(2\pi n)^{2/3}},
\]
i.e.
\[
T>\dfrac{2\pi \alpha k^3 n}{mc^3 (k^2-n^2)^{3/2}}.
\]
From \eqref{eq:tstar} and \eqref{eq:ukn}, we can write this condition as $T>u^k_n$.
Therefore, \eqref{eq:keplero} has a $T$-periodic solution if and only if $T>u^k_n$, for some $n\in \{1, \ldots, k-1\}$, and this solution is of type $(n,k)$. This is sufficient to conclude.
\end{proof}
\noindent
The aim of the next sections is to study the minimality of the circular and non-circular periodic solutions of \eqref{eq:keplero} given in Propositions \ref{pro:circolari} and \ref{pro:rosette}, depending on the values of $T>0$.
In particular, we plan to detect which is the minimal solution whose existence has been proved in Section \ref{sec3.2}.
To this end, we will be able to explicitly compute and compare the action levels of the non-circular solutions (see Section \ref{subsec:rosette}). The computation of the action level of circular solutions is still possible, but it it not evident how this level is related to the levels of non-circular solutions for any $T>0$.
However, computing the Morse index of circular solutions (see Section \ref{subsec:circ}) will be sufficient to conclude (Section \ref{subsec:minimalita}). 
\subsection{Action level of non-circular solutions}\label{subsec:rosette}
In this section we prove the following result on the action level of non-circular solutions.
\begin{Proposition} \label{pro:azionerosetta}
	Let $T>0$, $k\in \NN$, with $k\geq 2,$ $i_T\in \{1,\ldots, k-1\}$ such that \eqref{eq:condTrosetta} is satisfied and let $x$ be a solution of \eqref{eq:keplero} of type $(n,k)$, for some $n\in \{1, \ldots, i_T\}$.
	Then, the action level $I(x)$ of $x$ is given by
	\begin{equation} \label{eq:livrosetta}
	I_n^k=mc^2T+\dfrac{2\pi \alpha}{c}\, \sqrt{k^2-n^2}-\dfrac{T}{c}\, \left(m^{2/3}c^2-\left(\dfrac{2\pi \alpha n}{T}\right)^{2/3}\right)^{3/2}.
	\end{equation}
\end{Proposition}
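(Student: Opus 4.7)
My plan is to exploit the Hamiltonian structure of \eqref{eq:keplero} together with separability in polar coordinates, so as to reduce the computation of $I(x)$ to a one–dimensional action integral plus an angular contribution.

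First, I would perform a Legendre-transform rewriting of the action. Writing $p = m\dot x/\sqrt{1-|\dot x|^{2}/c^{2}}$ and $\gamma = 1/\sqrt{1-|\dot x|^2/c^2}$, a direct computation gives $p\cdot\dot x - K(\dot x) = mc^{2}(\gamma - 1)$, so that the Hamiltonian associated to $\mathcal{L} = K+V$ is $H = p\cdot\dot x - \mathcal{L} = mc^2\gamma - mc^2 - \alpha/|x| = h - mc^{2}$, with $h$ the conserved energy \eqref{eq:energia}. Since $H$ is constant along solutions,
\begin{equation*}
I(x) \;=\; \int_0^T \bigl(p\cdot\dot x - H\bigr)\,dt \;=\; \int_0^T p\cdot\dot x\,dt \;+\; (mc^{2}-h)\,T.
\end{equation*}

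Next, I would pass to polar coordinates $x = r e^{i\theta}$. Then $p\cdot\dot x = p_r\dot r + L\dot\theta$ with $p_r = m\gamma\dot r$ and $L = mr^2\gamma\dot\theta$ (the conserved angular momentum \eqref{eq:momento}). Since $x\in\Lambda_k$, one has $\int_0^T\dot\theta\,dt = 2\pi k$, and since $|x|$ has minimal period $T/n$ the radial part yields, by the change of variables $t\mapsto r$ and traversing each radial period twice (out and back),
\begin{equation*}
\int_0^T p\cdot\dot x\,dt \;=\; 2n\!\int_{r_{\min}}^{r_{\max}}\! |p_r|\,dr \;+\; 2\pi k L.
\end{equation*}
From $h = mc^2\gamma - \alpha/r$ and $|p|^{2} = p_r^2 + L^2/r^2 = m^2c^2(\gamma^2-1)$ I would extract the explicit expression
\begin{equation*}
p_r^{2} \;=\; \frac{(h+\alpha/r)^{2}-m^{2}c^{4}}{c^{2}} - \frac{L^{2}}{r^{2}} \;=\; -A + \frac{B}{r} - \frac{D}{r^{2}},
\end{equation*}
with $A = (m^2c^4-h^2)/c^2$, $B = 2h\alpha/c^2$, $D = L^2 - \alpha^2/c^2$ (both $A,D>0$ for the bound orbits under consideration). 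Here comes the main technical step: the classical evaluation (via the substitution $u = 1/r$, reducing to an integral of $\sqrt{-Du^2+Bu-A}$, or via a contour argument) of
\begin{equation*}
\oint p_r\,dr \;=\; \pi\,\frac{B}{\sqrt{A}} \;-\; 2\pi\sqrt{D} \;=\; \frac{2\pi h\alpha}{c\sqrt{m^{2}c^{4}-h^{2}}} \;-\; 2\pi\sqrt{L^{2}-\frac{\alpha^{2}}{c^{2}}}.
\end{equation*}
This is the step I expect to be the main obstacle, in the sense of being the only non-algorithmic input; but it is a classical relativistic-Kepler calculation, parallel to the familiar Delaunay formula for the non-relativistic case.

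Finally, I would substitute the values of $h$ and $L$ coming from the proof of Proposition~\ref{pro:rosette}, namely
\begin{equation*}
m^{2}c^{4} - h^{2} \;=\; \left(\frac{2\pi\alpha m^{2}c^{3}n}{T}\right)^{2/3},\qquad
L^{2} \;=\; \frac{\alpha^{2}}{c^{2}}\cdot\frac{k^{2}}{k^{2}-n^{2}},
\end{equation*}
and collect terms. The purely angular pieces combine immediately: $-2\pi n\sqrt{L^{2}-\alpha^{2}/c^{2}} + 2\pi kL$ equals $(2\pi\alpha/c)\sqrt{k^{2}-n^{2}}$. Setting $\beta = (2\pi\alpha n/T)^{1/3}$ one rewrites $\sqrt{m^{2}c^{4}-h^{2}} = \beta\, m^{2/3}c$ and $h = m^{2/3}c\,(m^{2/3}c^{2}-\beta^{2})^{1/2}$, and a short algebraic simplification shows
\begin{equation*}
\frac{2\pi nh\alpha}{c\sqrt{m^{2}c^{4}-h^{2}}} + (mc^{2}-h)T \;=\; mc^{2}T - \frac{T}{c}\!\left(m^{2/3}c^{2} - \Bigl(\tfrac{2\pi\alpha n}{T}\Bigr)^{2/3}\right)^{\!3/2}.
\end{equation*}
Adding the two contributions yields exactly \eqref{eq:livrosetta}.
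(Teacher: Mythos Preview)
Your argument is correct and takes a genuinely different route from the paper. The paper parametrizes by the angle $\theta$: after writing $I(x)=mc^2T-mc^2\int_0^T\sqrt{1-|\dot x|^2/c^2}+\int_0^T\alpha/|x|$ and using energy conservation, it changes variable to $\theta$, substitutes the explicit orbit equation $r(\theta)=1/[B(1+E\cos(\sqrt{1-\alpha^2/L^2c^2}\,\theta))]$, and evaluates the resulting trigonometric integrals $\int_0^{2\pi}(1+E\cos u)^{-p}\,du$ via residues, finally inserting the values of $h$ and $L$. Your approach instead parametrizes by the radius: you rewrite $I(x)=\int_0^T p\cdot\dot x\,dt+(mc^2-h)T$, split off the angular action $2\pi kL$, and recognize $\int_0^T p_r\dot r\,dt=n\oint p_r\,dr$ as the relativistic Delaunay radial action, which you evaluate in closed form as $\pi B/\sqrt{A}-2\pi\sqrt{D}$. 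The paper's method is more hands-on and requires no imported identity beyond standard residue calculations; yours is cleaner, connects transparently to action--angle variables, and makes the decomposition of $I_n^k$ into ``radial'' and ``angular'' pieces visible before any substitution. The one place you should tighten is the evaluation of $\oint p_r\,dr$: you flag it as classical, but since it is the only nontrivial input you should either carry out the contour argument (poles at $r=0$ and $r=\infty$) or give a precise reference.
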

\begin{Remark} \label{rem:angolo} Let us point out some relations for the radial and angular components $(r,\theta)$ of solutions $x$ of \eqref{eq:keplero}, which have been proved in \cite{BoDaFe22}: denoting again by $h$ and $L$ the energy and angular momentum of $x$, respectively (see \eqref{eq:energia} and \eqref{eq:momento}), we then have
	\begin{equation} \label{eq:thetaprimo}
	\dot{\theta}=\dfrac{L\, \sqrt{1-\dfrac{|\dot{x}|^2}{c^2}}}{m|x|^2}
	\end{equation}
and
	\begin{equation} \label{eq:rditheta}
	r(\theta)=\dfrac{1}{B}\, \dfrac{1}{1+E\cos \left(\sqrt{1-\dfrac{\alpha^2}{L^2c^2}}\, \theta \right)},
	\end{equation}
where
	\begin{equation} \label{eq:costanti1}
	B=\dfrac{\alpha h}{L^2c^2-\alpha^2},\quad e=\dfrac{\sqrt{\alpha^2 m^2 c^4+(h^2-m^2c^4)L^2c^2}}{L^2c^2-\alpha^2},\quad E=\dfrac{e}{B}.
	\end{equation}
\end{Remark}
\begin{proof}
Let us first recall that
\[
I(x)=\int_{0}^{T} mc^2\left(1-\sqrt{1-\dfrac{|\dot{x}|^2}{c^2}}\right)+\dfrac{\alpha}{|x|}=mc^2T-mc^2 \int_0^T \sqrt{1- \dfrac{|\dot{x}|^2}{c^2}}+\int_0^T \dfrac{\alpha}{|x|}.
\]
From the conservation of the energy (see \eqref{eq:energia}) we infer that
\begin{equation} \label{eq:conti1}
\sqrt{1-\dfrac{|\dot{x}|^2}{c^2}}=\dfrac{mc^2}{h+\alpha/|x|},
\end{equation}
which implies that
\[
I(x)=mc^2T-mc^2 \int_0^T \dfrac{mc^2}{h+\alpha/|x|}+\int_0^T \dfrac{\alpha}{|x|}.
\]
Now, let us notice that from \eqref{eq:thetaprimo} we deduce that $\theta$ is monotone; hence, it is possible to change the variable of integration setting $t=t(\theta)$, where $t(\theta)$ is the inverse function of $\theta$. Taking again into account \eqref{eq:thetaprimo} and \eqref{eq:conti1}, we infer that the function $t$ satisfies
\[
\dfrac{dt}{d\theta}=\dfrac{1}{\dot{\theta}}=\dfrac{m|x|^2}{L\, \sqrt{1-\dfrac{|\dot{x}|^2}{c^2}}}=\dfrac{h|x|^2+\alpha |x|}{Lc^2}.
\]
On the other hand, recalling \eqref{eq:deltatheta}, \eqref{eq:periodiche} and assuming without loss of generality $\theta(0)=0$, we have $\theta(T)=2\pi k$.
Using \eqref{eq:conti1} and \eqref{eq:rditheta}, we then obtain
\begin{align*}
I(x) & =  mc^2T-\dfrac{m^2c^2}{L}\, \int_0^{2\pi k} |x|^2\, d\theta+\dfrac{\alpha}{Lc^2}\, \int_0^{2\pi k} (h|x|+\alpha)\, d\theta  \\
& =  mc^2T +\dfrac{2\pi k \alpha^2}{Lc^2}-\dfrac{m^2c^2}{B^2 L}\, \int_0^{2\pi k} \dfrac{1}{\left(1+E\cos \left(\sqrt{1-\dfrac{\alpha^2}{L^2c^2}}\, \theta \right)\right)^2}\, d\theta\\
& \hphantom{=} + \dfrac{\alpha h}{B Lc^2}\, \int_0^{2\pi k} \dfrac{1}{1+E\cos \left(\sqrt{1-\dfrac{\alpha^2}{L^2c^2}}\, \theta \right)}\, d\theta.
\end{align*}
Now, from \eqref{eq:deltatheta} and \eqref{eq:periodiche} we know that $\sqrt{1-\alpha^2/L^2 c^2}=n/k$; hence, via the substitution $u=n\theta/k$, we obtain
\begin{equation} \label{eq:conti2}
\begin{aligned}
I(x) &= mc^2T +\dfrac{2\pi k \alpha^2}{Lc^2}-\dfrac{m^2c^2 k}{n B^2 L}\,
\int_0^{2\pi n} \dfrac{1}{\left(1+E\cos u\right)^2}\, du \displaystyle + \dfrac{\alpha hk}{nB Lc^2}\, \int_0^{2\pi n} \dfrac{1}{1+E\cos u}\, du \\
& = mc^2T +\dfrac{2\pi k \alpha^2}{Lc^2}-\dfrac{m^2c^2 k}{B^2 L}\, \int_0^{2\pi} \dfrac{1}{\left(1+E\cos u\right)^2}\, du \displaystyle + \dfrac{\alpha hk}{B Lc^2}\, \int_0^{2\pi} \dfrac{1}{1+E\cos u}\, du.
\end{aligned}
\end{equation}
The integrals in \eqref{eq:conti2} can be computed using the residue theorem, obtaining
\[
\int_0^{2\pi} \dfrac{1}{1+E\cos u}\, du=\dfrac{2\pi}{(1-E^2)^{1/2}},\quad \int_0^{2\pi} \dfrac{1}{\left(1+E\cos u\right)^2}\, du=\dfrac{2\pi}{(1-E^2)^{3/2}}.
\]
We then deduce
\begin{equation} \label{eq:conti3}
I(x)=\displaystyle mc^2T +\dfrac{2\pi k \alpha^2}{Lc^2}-\dfrac{2\pi k m^2c^2 }{B^2 L (1-E^2)^{3/2}}+ \dfrac{2\pi k\alpha h}{B Lc^2 (1-E^2)^{1/2}}.
\end{equation}
From \eqref{eq:deltatheta}, \eqref{eq:periodiche}, \eqref{eq:costanti1} it is easy to prove that
\begin{align*}
& L=\dfrac{\alpha}{c}\, \dfrac{k}{\sqrt{k^2-n^2}},\quad B=\dfrac{h}{\alpha}\, \dfrac{k^2-n^2}{n^2},\\
& e=\dfrac{\sqrt{k^2-n^2}}{\alpha n^2}\, \sqrt{h^2 k^2-m^2c^4n^2},\quad E=\dfrac{1}{h\sqrt{k^2-n^2}} \, \sqrt{h^2 k^2-m^2c^4n^2},\\
& (1-E^2)^{1/2}=\dfrac{n}{h}\, \sqrt{\dfrac{m^2c^4-h^2}{k^2-n^2}},\quad (1-E^2)^{3/2}=\dfrac{n^3}{h^3}\, \left(\dfrac{m^2c^4-h^2}{k^2-n^2}\right)^{3/2}.
\end{align*}
Replacing in \eqref{eq:conti3} we obtain
\begin{equation} \label{eq:conti4}
I(x) = mc^2T +\dfrac{2\pi  \alpha}{c}\, \sqrt{k^2-n^2}-\dfrac{2\pi \alpha n h^3}{c\, (m^2c^4-h^2)^{3/2}}.
\end{equation}
Taking into account \eqref{eq:periodorad} and \eqref{eq:periodiche}, we deduce that
\[
\dfrac{T}{n}=\dfrac{2\pi \alpha m^2c^3}{c\, (m^2c^4-h^2)^{3/2}};
\]
this implies that
\[
h=m^{2/3} c \, \left(m^{2/3}c^2-\left(\dfrac{2\pi \alpha n}{T}\right)^{2/3}\right)^{1/2}.
\]
From \eqref{eq:conti4} and the relation we conclude that
\begin{eqnarray*}
I(x) &= &mc^2T +\dfrac{2\pi  \alpha}{c}\, \sqrt{k^2-n^2}-\dfrac{2\pi \alpha n }{c}\, m^2 c^3 \, \left(m^{2/3}c^2-\left(\dfrac{2\pi \alpha n}{T}\right)^{2/3}\right)^{3/2}\, \dfrac{T}{2\pi \alpha n m^2 c^3}\\
& = &mc^2T+\dfrac{2\pi \alpha}{c}\, \sqrt{k^2-n^2}-\dfrac{T}{c}\, \left(m^{2/3}c^2-\left(\dfrac{2\pi \alpha n}{T}\right)^{2/3}\right)^{3/2}. \qedhere
\end{eqnarray*}
\end{proof}
\noindent
In order to study the minimality of non-circular solutions, it will be useful to compare the action levels of the solutions corresponding to different integers $n$.
This is the result of the following proposition.
\begin{Proposition} \label{prop:azionecrescente}
	Let $T>0$, $k\in \NN$, with $k\geq 2,$ $i_T\in \{1,\ldots, k-1\}$ such that \eqref{eq:condTrosetta} is satisfied and let $I^k_n$ be given in \eqref{eq:livrosetta} for $n\in \{1, \ldots, i_T\}$.
	Then, we have
	\[
	I_1^k\leq I^k_2\leq \ldots \leq I^k_{i_T}.
	\]
	\end{Proposition}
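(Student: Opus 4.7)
The plan is to view $n\mapsto I_n^k$ as the restriction to integers of a smooth function of a real variable $\xi$, compute its derivative, and show that positivity of this derivative is equivalent, after algebraic simplification, to the threshold condition $T\geq u_\xi^k$, which is automatically satisfied in the range of $\xi$ we care about.

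First, I will discard the $n$-independent term $mc^{2}T$ from \eqref{eq:livrosetta} and study
\[
g(\xi):=\frac{2\pi\alpha}{c}\sqrt{k^{2}-\xi^{2}}-\frac{T}{c}\left(m^{2/3}c^{2}-\left(\frac{2\pi\alpha\xi}{T}\right)^{2/3}\right)^{3/2},
\]
defined for $\xi\in(0,k)$ with $\xi\leq mc^{3}T/(2\pi\alpha)$, so that the base of the $3/2$-power is non-negative. A direct chain-rule computation gives
\[
g'(\xi)=\frac{2\pi\alpha}{c}\left[\left(m^{2/3}c^{2}-\left(\frac{2\pi\alpha\xi}{T}\right)^{2/3}\right)^{1/2}\left(\frac{2\pi\alpha\xi}{T}\right)^{-1/3}-\frac{\xi}{\sqrt{k^{2}-\xi^{2}}}\right].
\]

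Next, I will rewrite the condition $g'(\xi)\geq 0$ in a transparent form. Both terms in the bracket are non-negative on the domain of $g$, so I can square the relevant inequality. Isolating the fraction $m^{2/3}c^{2}/(2\pi\alpha\xi/T)^{2/3}$, using the identity $1+\xi^{2}/(k^{2}-\xi^{2})=k^{2}/(k^{2}-\xi^{2})$, and then raising to the power $3/2$, I expect to arrive at the equivalent inequality
\[
T\geq\frac{2\pi\alpha\,\xi\,k^{3}}{mc^{3}(k^{2}-\xi^{2})^{3/2}}.
\]
By \eqref{eq:tstar} and \eqref{eq:ukn}, the right-hand side is exactly the natural continuous extension $u_\xi^k$ of the sequence $(u_n^k)$ to $\xi\in(0,k)$.

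Finally, since $\xi\mapsto u_\xi^k$ is strictly increasing on $(0,k)$, the hypothesis \eqref{eq:condTrosetta} gives $T>u_{i_T}^k\geq u_\xi^k$ for every $\xi\in(0,i_T]$; hence $g'(\xi)>0$ on this interval, $g$ is strictly increasing there, and restricting to the integers $\xi=1,\ldots,i_T$ yields the (in fact strict) chain $I_1^k<I_2^k<\cdots<I_{i_T}^k$. I do not foresee a genuine obstacle: the only delicate step is the algebraic rearrangement turning $g'(\xi)\geq 0$ into $T\geq u_\xi^k$, where one must keep track of the sign of each factor in order to square the inequality safely, but in the relevant range all factors are non-negative by construction.
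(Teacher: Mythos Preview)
Your proposal is correct and follows essentially the same approach as the paper: extend $n\mapsto I_n^k$ to a real variable, differentiate, and reduce the sign of the derivative to the threshold condition $T>u_{i_T}^k$. The only cosmetic difference is that the paper packages the sign analysis via an auxiliary monotone function $\zeta(s)=m^{2/3}c^{2}s^{2}+(2\pi\alpha/T)^{2/3}k^{2}s^{2/3}-m^{2/3}c^{2}k^{2}$ and checks $\zeta(i_T)<0$, whereas you push the algebra one step further to the cleaner equivalence $g'(\xi)\ge 0\Longleftrightarrow T\ge u_\xi^k$ and then invoke monotonicity of $\xi\mapsto u_\xi^k$; both routes are the same computation.
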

\begin{proof}
	Let us first define $\varUpsilon: [0,k]\to \RR$ by
	\[
	\varUpsilon (s)=mc^2T+\dfrac{2\pi \alpha}{c}\, \sqrt{k^2-s^2}-\dfrac{T}{c}\, \left(m^{2/3}c^2-\left(\dfrac{2\pi \alpha s}{T}\right)^{2/3}\right)^{3/2},\quad \forall \ s\in [0,k].
	\]
	From Proposition~\ref{pro:azionerosetta} we know that 
	\begin{equation} \label{eq:conton0}
	\varUpsilon (n)=I_n^k,\quad \forall \ n\in \{1, \ldots, i_T\}.
	\end{equation}
	A simple computation shows that
	\[
	\varUpsilon'(s)=\dfrac{(2\pi \alpha)^{2/3}\, T^{1/3}}{c}\, \left[\left(\dfrac{m^{2/3}c^2}{s^{2/3}}-\left(\dfrac{2\pi \alpha}{T}\right)^{2/3}\right)^{1/2}-\left(\dfrac{2\pi \alpha}{T}\right)^{1/3}\, \dfrac{s}{\sqrt{k^2-s^2}}\right],
	\]
for every $s\in (0,k]$.
From this relation we deduce that
\[
\varUpsilon'(s) >0 \quad \Longleftrightarrow \quad \dfrac{m^{2/3}c^2}{s^{2/3}}-\left(\dfrac{2\pi \alpha}{T}\right)^{2/3}>\left(\dfrac{2\pi \alpha}{T}\right)^{2/3}\, \dfrac{s^2}{k^2-s^2}.
\]
A straightforward computation proves that
\begin{equation} \label{eq:conton1}
\varUpsilon'(s) >0 \quad \Longleftrightarrow \quad   m^{2/3}c^2 s^2 + \left(\dfrac{2\pi \alpha}{T}\right)^{2/3}\, k^2\, s^{2/3} -m^{2/3}c^2 k^2<0
\end{equation}
Define now 
\begin{equation} \label{eq:conton2}
\zeta (s)=m^{2/3}c^2 s^2 + \left(\dfrac{2\pi \alpha}{T}\right)^{2/3}\, k^2\, s^{2/3} -m^{2/3}c^2 k^2, \quad \forall \ s\in [0,k];
\end{equation}
the function $\zeta$ is clearly strictly increasing in $[0,k]$ and satifies 
\[
\zeta(0)=-m^{2/3}c^2 k^2<0,\quad \zeta (k)=\left(\dfrac{2\pi \alpha}{T}\right)^{2/3}\, k^2\, k^{2/3}>0.
\]
Hence, $\zeta$ has a unique zero $\zeta_0$ in $[0,k]$ and
\[
\zeta(s)<0\quad \Longleftrightarrow \quad 0<s<\zeta_0.
\]
Taking into account \eqref{eq:conton1} and \eqref{eq:conton2}, we then obtain that 
\[
\varUpsilon'(s) >0 \quad \Longleftrightarrow \quad 0<s<\zeta_0.
\]
Recalling \eqref{eq:conton0}, to prove the thesis it is now sufficient to show that $i_T<\zeta_0$, i.e. $\zeta(i_T)<0$.
Indeed, this is a consequence of assumption \eqref{eq:condTrosetta}: if $T>u^k_{i_T}$, from \eqref{eq:tstar} and \eqref{eq:ukn} we deduce that
\begin{equation} \label{eq:conton3}
T>\dfrac{i_T \, k^3}{(k^2-i_T^2)^{3/2}}\, \dfrac{2\pi \alpha}{mc^3}.
\end{equation}
On the other hand, from \eqref{eq:conton2} we have
\[
\zeta (i_T)=m^{2/3}c^2 i_T^2 + \left(\dfrac{2\pi \alpha}{T}\right)^{2/3}\, k^2\, i_T^{2/3} -m^{2/3}c^2 k^2.
\]
Now, \eqref{eq:conton3} implies
\[
\zeta (i_T) < m^{2/3}c^2 (i_T^2-k^2)+(2\pi \alpha)^{2/3}\, \left(\dfrac{mc^3}{2\pi \alpha}\, \dfrac{(k^2-i_T^2)^{3/2}}{i_T \, k^3}\right)^{2/3}\, k^2\, i_T^{2/3}=0. \qedhere
\]
\end{proof}
\subsection{Morse index of circular solutions}\label{subsec:circ}
The minimality of the circular solution $x_C$ given in \eqref{eq:solcirc} can be studied by computing its Morse index $\iota (x_C)$,
which can be defined by using the lagrangian structure of \eqref{eq:keplero}.
Indeed, as already noticed problem \eqref{eq:keplero} is the Euler-Lagrange equation corresponding to the Lagrangian
\[
\mathcal{L}(x,\dot{x})=mc^2 \left(1-\sqrt{1-\dfrac{|\dot{x}|^2}{c^2}}\right)+ \dfrac{\alpha}{|x|}.
\]
Given a circular solution $x_C$ of \eqref{eq:keplero}, linearizing the equation at $x_C$ and taking into account that
\[
\frac{\partial^2 \mathcal{L}}{\partial \dot{x}\, \partial x}=0,
\]	
we obtain
\begin{equation} \label{eq:morse11}
\frac{d}{dt}\left(A(t)\dot{q}\right)=B(t)q,
\end{equation} 
where
\[
A(t)=\frac{m}{c^2}\, \left(1-\dfrac{|\dot{x}_C(t)|^2}{c^2}\right)^{-3/2} \, x_C(t)\otimes x_C(t)+m \, \left(1-\dfrac{|\dot{x}_C(t)|^2}{c^2}\right)^{-1/2} \, \mbox{Id}
\]
and
\[
B(t)=3\alpha |x_C(t)|^{-5} \, x_C(t)\otimes x_C(t)-\alpha |x_C(t)|^{-3}\,  \mbox{Id},
\]
for every $t\in [0,T]$.
Here $x\otimes y = x y^{\top}$ when $x$ and $y$ are column vector of the same dimension.
We now observe that \eqref{eq:morse11} is the Euler-Lagrange equation of the quadratic Lagrangian
\[
L_C(t,q,\dot{q})=\dfrac{1}{2} \langle A(t)\dot{q},\dot{q}\rangle +\dfrac{1}{2}\langle B(t)q,q\rangle,
\] 
which induces the quadratic form $\tau_C: H^1_T(0,T)\to \RR$ defined by
\begin{equation} \label{eq:morsequadr}
\tau_C(q)=\int_0^T L_C(t,q(t),\dot{q}(t)),\quad \forall  \ q\in H^1_T(0,T).
\end{equation}
Noticing that $A(t)$ is positive definite, for every $t\in [0,T]$, the Morse index $j_C$ of $\tau_C$ is well-defined (cf. \cite[Par. 3.4]{Abb01}) and the Morse index $\iota (x_C)$ of the circular solution $x_C$ is defined as $j_C$. 

In the following theorem we give the result on the computation of $\iota (x_C)$.
\begin{Theorem} \label{teo:morse}
	Let $T>0$, $k\in \NN,$ with  $k\geq 1$, and $i_T\in \{0,\ldots, k-1\}$ such that \eqref{eq:condTrosetta} is fulfilled.
    Then, we have
	\[
	\iota(x_C)=2i_T.
	\]
\end{Theorem}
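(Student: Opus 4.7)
The plan, as anticipated in the introduction, is to compute the Conley--Zehnder index of the linearised Hamiltonian flow along $x_C$---following the approach used in \cite{KaOfPo21} for the classical Kepler problem---and then to deduce the Morse index via the Lagrangian Morse theorem of \cite{Abb03}.

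The first step is to pass to the Hamiltonian side. Since $A(t)$ in \eqref{eq:morse11} is positive definite, the quadratic Lagrangian $L_C$ is Legendre-convex in $\dot q$ and admits a time-dependent quadratic Hamiltonian
\[
H_C(t,q,p)=\tfrac12\langle A(t)^{-1}p,p\rangle-\tfrac12\langle B(t)q,q\rangle,
\]
so that $\tau_C$ in \eqref{eq:morsequadr} coincides with the second variation of the associated Hamiltonian action. Exploiting the planar rotational invariance of the problem, both $A(t)$ and $B(t)$ are obtained from constant matrices by conjugation with the rotation $\mathcal R(\omega t)$, where $\omega=2\pi k/T$ is the angular frequency from \eqref{eq:solcirc}. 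The linear symplectic change of coordinates $q=\mathcal R(\omega t)\tilde q$, $p=\mathcal R(\omega t)\tilde p$ therefore transforms the linearised system into an autonomous one $\dot z=\mathcal H z$ on $\RR^4$, where $\mathcal H$ is a constant Hamiltonian matrix, depending explicitly on $R,\omega,c,m,\alpha$, that includes a Coriolis contribution coming from the co-rotating change of variables.

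The second step is to compute the Conley--Zehnder index of $\Psi(t)=\exp(t\mathcal H)$ on $[0,T]$. The spectrum of $\mathcal H$ can be found explicitly; using \eqref{eq:valoreL} one checks that $\mathcal H$ carries a degenerate zero-eigenvalue coming from energy conservation and the $S^1$-family of nearby circular orbits, together with a pair of purely imaginary eigenvalues $\pm i\Omega$ governing the radial oscillation in the rotating frame. An explicit computation of $\Omega$ in terms of the Kepler parameters, compared with \eqref{eq:periodorad}--\eqref{eq:deltatheta} and \eqref{eq:periodiche}, shows that $\Omega T=2\pi n$ if and only if $T=u_n^k$. Hence, under the assumption \eqref{eq:condTrosetta}, the flow $\Psi(t)$ crosses the identity at $t=u_1^k,u_2^k,\dots,u_{i_T}^k$, and each such crossing (regular, since the eigenvalues $\pm i\Omega$ rotate at constant speed) contributes $2$ to the Conley--Zehnder index, yielding $\iota_{CZ}(\Psi)=2\,i_T$.

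Finally, I would invoke the Morse index theorem of \cite{Abb03} for periodic solutions of convex Lagrangian systems, which under our strict convexity of $L_C$ in $\dot q$ equates $j_C=\iota(x_C)$ with $\iota_{CZ}(\Psi)$; combined with the previous step, this gives $\iota(x_C)=2\,i_T$. The main obstacle will be the explicit spectral analysis of $\mathcal H$ in the rotating frame and the verification that the crossings of $\Psi(t)$ with the identity occur precisely at the thresholds $T=u_n^k$ from \eqref{eq:ukn}; once this quantitative match with Proposition~\ref{pro:rosette} is pinned down, the bookkeeping of Maslov crossings immediately produces the count $i_T$ from \eqref{eq:condTrosetta}.
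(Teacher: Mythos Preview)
Your strategy matches the paper's: linearise on the Hamiltonian side, make the system autonomous, decompose into a nilpotent block and an elliptic block, compute the Conley--Zehnder index of each, and invoke the Morse index theorem of \cite{Abb03}. The paper achieves the autonomous reduction via symplectic polar coordinates $(r,\vartheta,l,\Phi)$ rather than your Cartesian co-rotating frame, but the resulting constant Hamiltonian matrix has the same spectrum $\{0,0,\pm i\omega'\}$ with $\omega'=\alpha^{2}m/L^{3}$, so either route works.

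There is, however, a genuine gap in your CZ bookkeeping. First, the $u_{n}^{k}$ are threshold values of the \emph{period} $T$, not crossing times of the linearised flow; the flow $\exp(t\mathcal H)$ hits the Maslov cycle at the times $t=2\pi n/\omega'$, and the relation to \eqref{eq:ukn} is that $T\omega'/(2\pi)=k\sqrt{1-\alpha^{2}/(L^{2}c^{2})}$ passes through the integer $n$ precisely when $T=u_{n}^{k}$. More importantly, you cannot get $\iota_{CZ}(\Psi)=2i_{T}$ from the elliptic block alone. In the normalisation of \cite{KaOfPo21} used in the paper, the rotation block contributes $2\lfloor T\omega'/(2\pi)\rfloor+1$ (resp.\ $2\,T\omega'/(2\pi)-1$ at the integer endpoint), which under \eqref{eq:condTrosetta} gives $2i_{T}+1$ in every case, not $2i_{T}$. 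The missing $-1$ comes from the nilpotent block: the double zero eigenvalue is a Jordan pair $\left(\begin{smallmatrix}0&s_{L}\\0&0\end{smallmatrix}\right)$ with $s_{L}>0$, and by \cite[Lemma~3.7]{KaOfPo21} this shear contributes exactly $-1$ to the index. You acknowledge the zero eigenvalue but then ignore its contribution; without checking the sign of $s_{L}$ and including the resulting $-1$, your count is off by one at each step and only lands on the right answer by accident.
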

\begin{proof}
The proof is based on the arguments in \cite{BoDaFe22} and \cite{KaOfPo21}.
Following \cite{BoDaFe22}, we first define
\begin{equation*}
p=\dfrac{m\dot{x}}{\sqrt{1-\dfrac{|\dot{x}|^2}{c^2}}}
\end{equation*}
and 
\begin{equation*}
\Omega=(0,+\infty)\times \mathbb{T}^1 \times \mathbb{R}^2;
\end{equation*}
Via the diffeomorphism
\[
\Psi \colon \Omega\to (\mathbb{R}^2\setminus\{0\})\times \mathbb{R}^2, \qquad (r,\vartheta,l,\Phi)\mapsto (x,p), 
\]
given by 
\[
x=re^{i\vartheta}, \qquad p=le^{i\vartheta}+\dfrac{\Phi}{r}ie^{i\vartheta},
\]
\eqref{eq:keplero} can be written in the form 
\begin{equation}\label{eq:hamkeplero}
\begin{cases}
\dot r = \partial_l \mathcal{H}_0(r,\vartheta,l,\Phi) = \dfrac{l}{m}\dfrac{1}{\sqrt{1+\dfrac{l^2+\Phi^2/r^2}{m^2c^2}}}, \vspace{7pt}\\
\dot \vartheta = \partial_\Phi \mathcal{H}_0(r,\vartheta,l,\Phi) = \dfrac{\Phi}{mr^{2}}\dfrac{1}{\sqrt{1+\dfrac{l^2+\Phi^2/r^2}{m^2c^2}}}, \vspace{7pt}\\
\dot l = - \partial_r \mathcal{H}_0(r,\vartheta,l,\Phi) = \dfrac{\Phi^{2}}{m r^{3}} \dfrac{1}{\sqrt{1+\dfrac{l^2+\Phi^2/r^2}{m^2c^2}}} - \dfrac{\alpha}{r^{2}}, \vspace{7pt}\\
\dot \Phi = -\partial_\vartheta \mathcal{H}_0(r,\vartheta,l,\Phi) = 0, \vspace{2pt}
\end{cases}
\end{equation}
corresponding to the Hamiltonian
\begin{equation*}
\mathcal{H}_0(r,\vartheta,l,\Phi)=mc^2\sqrt{1+\dfrac{l^2+\Phi^2/r^2}{m^2c^2}}-\dfrac{\alpha}{r}.
\end{equation*}
It is immediate to see that the circular solution $x_C$ given in \eqref{eq:solcirc} corresponds to $\varsigma(t)=(R,\omega t, 0, L)$, $t\in [0,T]$, where $L$ is as in \eqref{eq:valoreL}.

According to the Morse index theorem \cite[Cor. 3.4.2]{Abb01}, we have
\begin{equation} \label{eq:morse}
\iota(x_C)=i^{\textrm{CZ}} (\zeta; [0,T]),
\end{equation}
where $\zeta$ is the fundamental matrix solution of the linearized system of \eqref{eq:hamkeplero} at $\varsigma$ and $i^{\textrm{CZ}} (\zeta; [0,T])$ is its Conley-Zehnder index on the period $[0,T]$.
A standard computation shows that the linearized Hamiltonian system of \eqref{eq:hamkeplero} at $\varsigma$ is the linear system $\dot w=Mw$, where 
\[
M=\left[\begin{array}{cccc}
0 & 0 & A & 0 \vspace{4pt} \\
B & 0 & 0 & C \vspace{4pt} \\
D & 0 & 0 & -B \vspace{4pt} \\
0 & 0 & 0 & 0 
\end{array}
\right]
\]
where
\[
A=\dfrac{\sqrt{L^2c^2-\alpha^2}}{mLc},\quad 
B=\dfrac{\alpha^3m^2(\alpha^2-2L^2c^2)}{L^5(L^2c^2-\alpha^2)},\quad
C=\dfrac{\alpha^2 m\sqrt{L^2c^2-\alpha^2}}{L^5c},\quad 
D=-\dfrac{\alpha^4m^3c}{L^5\sqrt{L^2c^2-\alpha^2}}.
\]
The matrix $M$ has the eigenvalues $\{0, 0, \pm i\omega'\}$, where
\begin{equation}\label{eq:autovalori}
\omega'=\dfrac{\alpha^2m}{L^3}.
\end{equation}
According to \cite[Sect. 4.1]{KaOfPo21}, there exists a symplectic matrix $P\in \textrm{Sp}(4)$ such that 
\[
M=P\, \left(\left[\begin{array}{cc}
0 & s_L \vspace{4pt} \\
0 & 0
\end{array}
\right]
\diamond \left[\begin{array}{cc}
0 & -\omega' \vspace{4pt}\\
\omega'& 0
\end{array}
\right]
\right)\, P^{-1},
\]
for some $s_L>0$.
As a consequence, the Conley-Zehnder index of $\zeta$ on the period $[0,T]$ is given by
\begin{equation} \label{eq:indice1}
i^{\textrm{CZ}} (\zeta; [0,T])=i^{\textrm{CZ}} (\eta; [0,T])+i^{\textrm{CZ}} (\xi; [0,T]),
\end{equation}
where
\[
\eta(t)=\left[\begin{array}{cc}
1 & s_L t \vspace{4pt}\\
0 & 1
\end{array}
\right],\quad \xi(t)=\exp\left( t\left[\begin{array}{cc}
0 & -\omega' \vspace{4pt}\\
\omega'& 0
\end{array}
\right]\right),
\]
for every $t\in [0,T]$. Taking into account the fact that $s_L>0$, from \cite[Lemma 3.7]{KaOfPo21} we deduce that
\begin{equation} \label{eq:indice2}
i^{\textrm{CZ}} (\eta; [0,T])=-1.
\end{equation}

Now, from Proposition 2.10 and Lemma 3.5 in \cite{KaOfPo21} we infer that 
\begin{equation} \label{eq:indice3}
i^{\textrm{CZ}} (\xi; [0,T])=
\begin{cases}
2\dfrac{T\omega'}{2\pi}-1 &\text{if } \dfrac{T\omega'}{2\pi}\in \NN \vspace{4pt} \\
2\Bigl\lfloor \dfrac{T\omega'}{2\pi}\Bigr\rfloor +1 & \text{if } \dfrac{T\omega'}{2\pi}\notin \NN. 
\end{cases}
\end{equation}
Now, recalling \eqref{eq:valoreL} and \eqref{eq:autovalori}, we have
\[
\frac{T\omega'}{2\pi}=\frac{T\alpha^2 m}{2\pi L^3}=
\frac{k\sqrt{L^2c^2-\alpha^2}}{Lc} = k\sqrt{1-\frac{\alpha^2}{L^2c^2}}<k.
\]
As a consequence, for $n\in \{0, \ldots, k-1\}$
by means of an easy computation we deduce that
\[
n\leq \dfrac{T\omega'}{2\pi} < n+1 \quad \Longleftrightarrow \quad \dfrac{\alpha^2}{c^2}\, \dfrac{k^2}{k^2-n^2}\leq L^2 < \dfrac{\alpha^2}{c^2}\, \dfrac{k^2}{k^2-(n+1)^2}.
\]
Using \eqref{eq:valoreL} and recalling \eqref{eq:ukn}, we conclude that
\[
n\leq \dfrac{T\omega'}{2\pi} < n+1 \quad \Longleftrightarrow \quad u^k_n\leq T<u^k_{n+1}.
\]
As a consequence, from \eqref{eq:indice3} we deduce that
\begin{equation} \label{eq:indice4}
i^{\textrm{CZ}} (\xi; [0,T])=
\begin{cases}
2n-1 & \text{if } T=u^k_n \\
2n+1 & \text{if } T\in (u^n_k,u^{n+1}_k). 
\end{cases}
\end{equation}
From \eqref{eq:morse}, \eqref{eq:indice1}, \eqref{eq:indice2} and \eqref{eq:indice4} we obtain the result.
\end{proof}
\begin{Remark} \label{rem:morsenozero}
Let us notice that if $\iota(x_C)\geq 1$ then $x_C$ is not minimal.
Indeed, a simple computation shows that, as in the classical case, the Morse index $\iota(x_C)$ is equal to the sum of the dimensions of the eigenspaces relative to the negative eigenvalues of the periodic  problem
\begin{equation} \label{eq:morseeig}
\begin{cases}
\dfrac{d}{dt}\left(A(t)\dot{q}\right)=B(t)q -\lambda q \vspace{4pt} \\
q(0)=q(T),\ \dot{q}(0)=\dot{q}(T).
\end{cases}
\end{equation}
Moreover, $\tau_{C}(q)<0$ if $q$ is a nontrivial function belonging to the direct sum of those
eigenspaces, where $\tau_C$ is defined in \eqref{eq:morsequadr}.
Therefore, if $\iota(x_C)\geq 1$, then there exists an eigenfunction $\bar q\in H^1_T(0,T)$ associated to \eqref{eq:morseeig} and such that
\begin{equation} \label{eq:tauneg}
\tau_C(\bar q)<0.
\end{equation}
A regularity argument shows that $\bar q\in C^1(0,T)$, thus implying that $x_C+\epsilon \bar q\in D_{\psi}\cap \Lambda_k$ if $\epsilon$ is sufficiently small (cf. Section \ref{sec3.1} for the notation).
Now, from Taylor's formula we deduce that
\[
I(x_C+\epsilon \bar q)-I(x_C)=\dfrac{1}{2}\ \tau_C(\bar q)\, \epsilon^2 +o(\epsilon^2),\quad \epsilon \to 0.
\]
Taking into account \eqref{eq:tauneg}, we conclude that $x_C$ is not minimal.
\end{Remark}	
\subsection{Minimality and asymptotic level estimates}\label{subsec:minimalita}
We are now in a position to prove the result on the minimality of solutions of \eqref{eq:keplero}.
\begin{Theorem} \label{teo:minimalita}
	Let $T>0$ and $k\in \NN,$ with $k\geq 1$. Then, we have:
	\begin{enumerate}
		\item if $k=1$, for every $T>0$ the minimal solution is the circular solution defined in \eqref{eq:solcirc};
		\item if $k\geq 2$ and $T\leq u^k_1$, the minimal solution is the circular solution defined in \eqref{eq:solcirc};
		\item if $k\geq 2$ and $T> u^k_1$, the minimal solution is the non-circular solution of type $(1,k)$ given in Proposition~\ref{pro:rosette}. 
	\end{enumerate}
\end{Theorem}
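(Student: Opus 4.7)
The plan is to reduce Theorem \ref{teo:minimalita} to the preliminary results gathered in Sections~\ref{subsec:solexistence}, \ref{subsec:rosette} and \ref{subsec:circ}, by first observing that a minimizer certainly exists (from the argument in Section~\ref{sec3.2}) and then using the classification of $T$-periodic solutions in $\Lambda_k$ to identify which one it must be. Specifically, any minimizer of $I$ on $\Lambda_k\cap D_\psi$ is a critical point of $I$, hence by Proposition~\ref{pro:criticisoluzioni} a $T$-periodic solution of \eqref{eq:keplero}; such a solution is either the unique circular solution $x_C$ of Proposition~\ref{pro:circolari} or one of the non-circular solutions of type $(n,k)$, $n\in\{1,\dots,i_T\}$, listed in Proposition~\ref{pro:rosette}. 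The three cases of the theorem correspond to the position of $T$ with respect to the thresholds $u^k_n$.

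For cases (1) and (2), I would invoke Proposition~\ref{pro:rosette} directly: when $k=1$, or when $k\geq 2$ and $T\leq u^k_1$, we have $i_T=0$, so there are no non-circular $T$-periodic solutions in $\Lambda_k$. The minimizer produced in Section~\ref{sec3.2} is therefore forced to coincide with the unique circular solution $x_C$ given in \eqref{eq:solcirc}.

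For case (3), i.e.\ $k\geq 2$ and $T>u^k_1$, we have $i_T\geq 1$, so Theorem~\ref{teo:morse} gives $\iota(x_C)=2i_T\geq 2$. By Remark~\ref{rem:morsenozero}, $x_C$ admits a direction along which $I$ strictly decreases to second order, so $x_C$ is not a local minimum of $I$ on $\Lambda_k\cap D_\psi$; in particular it cannot be the global minimizer constructed in Section~\ref{sec3.2}. Since Proposition~\ref{pro:rosette} tells us the only remaining candidates are the non-circular solutions of types $(1,k),\dots,(i_T,k)$, and since Proposition~\ref{prop:azionecrescente} shows that their action levels satisfy $I^k_1\leq I^k_2\leq \dots \leq I^k_{i_T}$ (with strict monotonicity, as apparent from the strict positivity of $\Upsilon'$ on $(0,\zeta_0)$ in its proof), the minimizer must be the solution of type $(1,k)$.

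No serious technical obstacle remains at this stage: all the heavy lifting has been done by the action formula of Proposition~\ref{pro:azionerosetta}, the comparison Proposition~\ref{prop:azionecrescente}, and the Morse-index calculation of Theorem~\ref{teo:morse}. The only point to be handled with a little care is the logical step in case (3), where one must notice that the existence of a minimizer (from Section~\ref{sec3.2}) together with the non-minimality of $x_C$ (via its nonzero Morse index) leaves precisely the type $(1,k)$ non-circular solution as the unique candidate, completing the proof.
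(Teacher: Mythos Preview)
Your proposal is correct and follows essentially the same route as the paper: both argue that the minimizer exists (Section~\ref{sec3.2}) and must be a $T$-periodic solution, that in cases (1) and (2) the classification of Proposition~\ref{pro:rosette} leaves only the circular solution, and that in case (3) the positive Morse index from Theorem~\ref{teo:morse} together with Remark~\ref{rem:morsenozero} rules out $x_C$, after which Proposition~\ref{prop:azionecrescente} singles out the type $(1,k)$ solution. Your write-up is slightly more explicit about the logical skeleton, but there is no substantive difference in strategy.
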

\begin{proof}
We have already observed that for $k=1$ the unique $T$-periodic solution in $\Lambda_k$ is the circular solution.
This is the situation also when $k\geq 2$ and $T\leq u^k_1$, which corresponds to the case $i_T=0$ in Proposition~\ref{pro:rosette}.
	
In the case $k\geq 2$ and $i_T>0$, from Theorem \ref{teo:morse} and Remark \ref{rem:morsenozero} we deduce that the circular solution cannot be minimal.
Hence, the minimal solution is one of the non-circular solutions; Proposition \ref{prop:azionecrescente} allows to conclude that the minimal solution is the solution of type $(1,k)$.
\end{proof}
\begin{Remark} Let us observe, as a complement of the study of the values of the action functional, that when $k\geq 2$ the action level of the circular solution is greater of all the levels of the existing non-circular solutions, at least for $T$ sufficiently large.

Indeed, let us first fix $k\geq 2$ and observe that if $T>u^k_{k-1}$ then \eqref{eq:keplero} has $k-1$ solutions of type $(1,k), \ldots, (k-1,k)$ (cf. Proposition~\ref{pro:rosette}). 
As a consequence, we can consider the action level of the solution $x$ of type $(n,k)$, for every $n\in \{1,\ldots, k-1\}$, which is given in \eqref{eq:livrosetta}. Now, let us notice that
\begin{align*}
\left(m^{2/3}c^2-\left(\frac{2\pi \alpha n}{T}\right)^{2/3}\right)^{3/2} & =mc^3\, \left(1-\left(\frac{2\pi \alpha n}{mc^3}\right)^{2/3}\, \frac{1}{T^{2/3}}\right)^{3/2}\\
&= mc^3\, \left(1-\frac{3}{2}\, \left(\frac{2\pi \alpha n}{mc^3}\right)^{2/3}\, \frac{1}{T^{2/3}}+\frac{15}{8}\, \left(\frac{2\pi \alpha n}{mc^3}\right)^{4/3}\, \frac{1}{T^{4/3}} +O\left(\frac{1}{T^{2}}\right)\right),
\end{align*}
as $ T\to +\infty.$
Substituting this relation in \eqref{eq:livrosetta} we obtain
\begin{equation} \label{eq:livrosetta2}
\begin{aligned}
I_n^k&=mc^2T+\frac{2\pi \alpha}{c}\, \sqrt{k^2-n^2}\\
&\hphantom{=\,} -mc^2 T\, \left(1-\dfrac{3}{2}\, \left(\frac{2\pi \alpha n}{mc^3}\right)^{2/3}\, \frac{1}{T^{2/3}}+\frac{15}{8}\, \left(\frac{2\pi \alpha n}{mc^3}\right)^{4/3}\, \frac{1}{T^{4/3}} +O\left(\frac{1}{T^{2}}\right)\right)\\
& = \frac{3}{2}\, (2\pi \alpha n )^{2/3}m^{1/3}\, T^{1/3} +\frac{2\pi \alpha}{c}\, \sqrt{k^2-n^2}
-\frac{15}{8}\, \frac{(2\pi \alpha n)^{4/3}}{m^{1/3}c^2}\, \frac{1}{T^{1/3}} +O\left(\frac{1}{T}\right),\quad T\to +\infty.
\end{aligned}
\end{equation}
On the other hand, recalling \eqref{eq:solcirc} and \eqref{eq:valoreL}, the action level of the circular solution $x_C$ given in Proposition~\ref{pro:circolari} is
\begin{equation} \label{eq:livcirc}
\begin{aligned}
I(x_C) & = mc^2T-\int_0^T mc^2\,  \sqrt{1-\frac{|\dot{x}|^2}{c^2}}-\frac{\alpha}{|x|}
= mc^2T-\int_0^T mc^2 \, \sqrt{1-\frac{\alpha^2}{L^2 c^2}}-\frac{\alpha}{R}\\
& = mc^2 T - \frac{mcT}{L}\, \sqrt{L^2c^2-\alpha^2}+\frac{mc\alpha^2 T}{L\sqrt{L^2c^2-\alpha^2}}
=mc^2 T - \frac{m^2 c^2\alpha^2 }{2\pi kL^3}\, T^2+2\pi k L.
\end{aligned}
\end{equation}
Now, from \eqref{eq:valoreL} we deduce that
\[
L^4(L^2c^2-\alpha^2)=\dfrac{m^2c^2\alpha^4}{4\pi^2 k^2}\, T;
\]
hence, $x=L^2$ is a solution of the equation
\begin{equation} \label{eq:terzogrado}
4\pi^2 k^2 c^2x^3-4\pi^2k^2\alpha^2 x^2-m^2c^2\alpha^4 T^2=0.
\end{equation}
The substitution
\begin{equation} \label{eq:sostxy}
y=12\pi^2 k^2 c^2 x-4\pi^2k^2 \alpha^2
\end{equation}
transforms \eqref{eq:terzogrado} in the form
\[
y^3+py+q=0,\quad p=-48\pi^4 k^4 \alpha^4,\ q=-16\pi^4 k^4 \alpha^4(27m^2 c^6T^2+8\pi^2 k^2 \alpha^2).
\]
An application of Cardano's formula gives
\[
y=6\pi^{4/3}k^{4/3}\alpha^{4/3} m^{2/3}c^2 T^{2/3}U(T),
\]
where
\begin{equation} \label{eq:UT}
U(T)=\sqrt[3]{1+\frac{8\pi^2 k^2 \alpha^2}{27 m^2 c^6 T^2}-\sqrt{1+\frac{16\pi^2 k^2 \alpha^2}{27 m^2 c^6 T^2}}}+\sqrt[3]{1+\frac{8\pi^2 k^2 \alpha^2}{27 m^2 c^6 T^2}+\sqrt{1+\frac{16\pi^2 k^2 \alpha^2}{27 m^2 c^6 T^2}}}.
\end{equation}
As a consequence, recalling \eqref{eq:sostxy}, we obtain
\[
x=L^2=\frac{\alpha^2}{3c^2}\, \left(\frac{3m^{2/3}c^2 T^{2/3}}{2\pi^{2/3}k^{2/3}\alpha^{2/3}}\, U(T)+1\right)
\]
and
\[
L=\frac{\alpha}{\sqrt{3}c}\, \left(\frac{3m^{2/3}c^2 T^{2/3}}{2\pi^{2/3}k^{2/3}\alpha^{2/3}}\, U(T)+1\right)^{1/2}.
\]
From \eqref{eq:livcirc} we then deduce that
\begin{equation} \label{eq:livcirc2}
\begin{aligned}
I(x_C) & = mc^2 T - \frac{m^2 c^2\alpha^2 }{2\pi k}\, \left(\frac{\sqrt{3}c}{\alpha}\right)^3 \, T^2\, \left(\frac{3m^{2/3}c^2 T^{2/3}}{2\pi^{2/3}k^{2/3}\alpha^{2/3}}\, U(T)+1\right)^{-3/2}\\
& \hphantom{=\,} + 2\pi k \, \frac{\alpha}{\sqrt{3}c}\, \left(\frac{3m^{2/3}c^2 T^{2/3}}{2\pi^{2/3}k^{2/3}\alpha^{2/3}}\, U(T)+1\right)^{1/2}\\
& =mc^2 T -\frac{3\sqrt{3m^2 c^5}}{2\pi k\alpha} \, T^2  \left(\frac{3m^{2/3}c^2 T^{2/3}}{2\pi^{2/3}k^{2/3}\alpha^{2/3}} U(T)+1\right)^{-3/2}+\frac{2\pi k\alpha}{\sqrt{3}c} \left(\frac{3m^{2/3}c^2 T^{2/3}}{2\pi^{2/3}k^{2/3}\alpha^{2/3}} U(T)+1\right)^{1/2} \!\!\!\!.
\end{aligned}
\end{equation}
Let us now recall that the circular solution $x_C$ exists for every $T>0$.
Hence, it is possible to study the asymptotic behaviour of $I(x_C)$ when $T\to +\infty$; to this end, let us first observe that
\[
\sqrt{1+\frac{16\pi^2 k^2 \alpha^2}{27 m^2 c^6 T^2}}=1+\frac{8\pi^2 k^2 \alpha^2}{27 m^2 c^6 T^2}-\frac{32\pi^4 k^4 \alpha^4}{27^2 m^4 c^12 T^4}+O\left(\frac{1}{T^6}\right),\quad T\to +\infty.
\]
As a consequence, we have
\begin{equation} \label{eq:ven11}
\begin{aligned}
\sqrt[3]{1+\frac{8\pi^2 k^2 \alpha^2}{27 m^2 c^6 T^2}-\sqrt{1+\frac{16\pi^2 k^2 \alpha^2}{27 m^2 c^6 T^2}}} &=\sqrt[3]{\frac{32\pi^4 k^4 \alpha^4}{27^2 m^4 c^12 T^4}+O\left(\frac{1}{T^6}\right)}\\
&=\frac{32^{1/3}\pi^{4/3} k^{4/3} \alpha^{4/3}}{9 m^{4/3} c^4}\, \frac{1}{T^{4/3}}+O\left(\frac{1}{T^{10/3}}\right),\quad T\to +\infty,
\end{aligned}
\end{equation}
and
\begin{equation} \label{eq:ven22}
\begin{aligned}
\sqrt[3]{1+\frac{8\pi^2 k^2 \alpha^2}{27 m^2 c^6 T^2}+\sqrt{1+\frac{16\pi^2 k^2 \alpha^2}{27 m^2 c^6 T^2}}}
& =\sqrt[3]{2+\frac{16\pi^2 k^2 \alpha^2}{27 m^2 c^6 T^2}-\frac{32\pi^4 k^4 \alpha^4}{27^2 m^4 c^12 T^4}+O\left(\frac{1}{T^6}\right)}\\
& =\sqrt[3]{2}+O\left(\frac{1}{T^{2}}\right),\quad T\to +\infty.
\end{aligned}
\end{equation}
From \eqref{eq:UT}, \eqref{eq:ven11} and \eqref{eq:ven22} we deduce that
\[
U(T)=\sqrt[3]{2}+\frac{32^{1/3}\pi^{4/3} k^{4/3} \alpha^{4/3}}{9 m^{4/3} c^4}\, \frac{1}{T^{4/3}}+O\left(\frac{1}{T^{2}}\right),\quad T\to +\infty.
\]
We then have
\begin{equation} \label{eq:ven44}
\begin{aligned}
1 +& \frac{3m^{2/3}c^2 T^{2/3}}{2\pi^{2/3}k^{2/3}\alpha^{2/3}} U(T) \\
& = 1 + \frac{3m^{2/3}c^2 T^{2/3}}{2\pi^{2/3}k^{2/3}\alpha^{2/3}}\, \left(\sqrt[3]{2}+\frac{32^{1/3}\pi^{4/3} k^{4/3} \alpha^{4/3}}{9 m^{4/3} c^4}\, \frac{1}{T^{4/3}}+O\left(\frac{1}{T^{2}}\right)\right)\\
& = \frac{3m^{2/3}c^2}{2^{2/3}\pi^{2/3}k^{2/3}\alpha^{2/3}}\,T^{2/3}+1+\frac{4^{1/3}\pi^{2/3} k^{2/3} \alpha^{2/3}}{3 m^{2/3} c^2}\, \frac{1}{T^{2/3}}+O\left(\frac{1}{T^{4/3}}\right)\\
& = \frac{3m^{2/3}c^2}{2^{2/3}\pi^{2/3}k^{2/3}\alpha^{2/3}}\, T^{2/3}\left(1+\frac{2^{2/3}\pi^{2/3}k^{2/3}\alpha^{2/3}}{3m^{2/3}c^2}\, \frac{1}{T^{2/3}}+\frac{2^{4/3}\pi^{4/3} k^{4/3} \alpha^{4/3}}{9 m^{4/3} c^4}\, \frac{1}{T^{4/3}}+O\left(\frac{1}{T^{2}}\right)\right),
\end{aligned}
\end{equation}
for $T\to +\infty$.
This implies that 
\begin{equation} \label{eq:ven55}
\begin{aligned}
& \left(\frac{3m^{2/3}c^2 T^{2/3}}{2\pi^{2/3}k^{2/3}\alpha^{2/3}}\, U(T)+1\right)^{1/2}\\
&=\frac{3^{1/2}m^{1/3}c}{2^{1/3}\pi^{1/3}k^{1/3}\alpha^{1/3}}\, T^{1/3} \,  \left[1+\frac{1}{2}\, \left(\frac{2^{2/3}\pi^{2/3}k^{2/3}\alpha^{2/3}}{3m^{2/3}c^2}\, \frac{1}{T^{2/3}}
+\frac{2^{4/3}\pi^{4/3} k^{4/3} \alpha^{4/3}}{9 m^{4/3} c^4}\, \frac{1}{T^{4/3}}\right) \right. \\
&\hphantom{=\,} \left.-\dfrac{1}{8}\, \frac{2^{4/3}\pi^{4/3}k^{4/3}\alpha^{4/3}}{3^2 m^{4/3}c^4}\, \frac{1}{T^{4/3}}+O\left(\frac{1}{T^{2}}\right)\right]\\
&=\frac{3^{1/2}m^{1/3}c}{2^{1/3}\pi^{1/3}k^{1/3}\alpha^{1/3}}\, T^{1/3} \, \left(1+\frac{2^{2/3}\pi^{2/3}k^{2/3}\alpha^{2/3}}{6m^{2/3}c^2}\, \frac{1}{T^{2/3}}+\dfrac{2^{4/3}\pi^{4/3} k^{4/3} \alpha^{4/3}}{24 m^{4/3} c^4}\, \frac{1}{T^{4/3}}+O\left(\dfrac{1}{T^{2}}\right)\right),
\end{aligned}
\end{equation}
for $T\to +\infty$.
Similarly, from \eqref{eq:ven44} we obtain
\begin{equation} \label{eq:ven66}
\begin{aligned}
& \left(\frac{3m^{2/3}c^2 T^{2/3}}{2\pi^{2/3}k^{2/3}\alpha^{2/3}}\, U(T)+1\right)^{-3/2}\\
&\; =\frac{2\pi k\alpha}{3^{3/2} m c^3}\, \frac{1}{T} \,  \left(1-\frac{2^{2/3}\pi^{2/3}k^{2/3}\alpha^{2/3}}{2m^{2/3}c^2}\, \frac{1}{T^{2/3}}
+\frac{2^{4/3}\pi^{4/3} k^{4/3} \alpha^{4/3}}{24 m^{4/3} c^4}\, \frac{1}{T^{4/3}}+O\left(\frac{1}{T^{2}}\right)\right),\quad T\to +\infty.
\end{aligned}
\end{equation}
From \eqref{eq:livcirc2}, \eqref{eq:ven55} and \eqref{eq:ven66} we easily infer that
\begin{equation} \label{eq:livcirc3}
\begin{aligned}
& I(x_C) \\
&= mc^2 T  -\frac{3\sqrt{3m^2 c^5}}{2\pi k\alpha} \, T^2 \, \frac{2\pi k\alpha}{3^{3/2} m c^3}\, \frac{1}{T} \,  \left(1-\frac{2^{2/3}\pi^{2/3}k^{2/3}\alpha^{2/3}}{2m^{2/3}c^2}\, \frac{1}{T^{2/3}}
+\frac{2^{4/3}\pi^{4/3} k^{4/3} \alpha^{4/3}}{24 m^{4/3} c^4}\, \frac{1}{T^{4/3}}+O\left(\frac{1}{T^{2}}\right)\right)\\
&\hphantom{=\,}  +\frac{2\pi k\alpha}{\sqrt{3}c}\, \frac{3^{1/2}m^{1/3}c}{2^{1/3}\pi^{1/3}k^{1/3}\alpha^{1/3}}\, T^{1/3} 
\left(1+\frac{2^{2/3}\pi^{2/3}k^{2/3}\alpha^{2/3}}{6m^{2/3}c^2}\, \frac{1}{T^{2/3}}+\frac{2^{4/3}\pi^{4/3} k^{4/3} \alpha^{4/3}}{24 m^{4/3} c^4}\, \frac{1}{T^{4/3}}+O\left(\frac{1}{T^{2}}\right)\right) \\
&=\frac{3}{2}\, (2\pi \alpha k )^{2/3}m^{1/3}\, T^{1/3} +\frac{1}{8}\, \frac{(2\pi \alpha k)^{4/3}}{m^{1/3}c^2}\, \frac{1}{T^{1/3}} +O\left(\frac{1}{T}\right),\quad T\to +\infty.
\end{aligned}
\end{equation}
Comparing \eqref{eq:livrosetta2} and \eqref{eq:livcirc3}, recalling that non-circular solutions $x$ exist for $n=1,\ldots, k-1$, we obtain that the relation
\[
I(x_C)>I^k_{k-1}>\ldots >I^k_1
\]
holds true asymptotically for $T\to +\infty$.
\end{Remark}
\section{Appendix: proof of the min-max principle}\label{sec:appendix}
We begin by stating a deformation-tipe result which is easily adapted from the corresponding
\cite[Theorem 2.2]{LiMa04}.
\begin{Theorem}\label{thm:deformation}
Assume that $I:X\to \RR\cup\{+\infty\}$ satisfies Assumption~\ref{ass:I}, and that
$B,C \subset X$ are nonempty closed sets such that $C$ is compact, $ C \subset D_{I} $ and
$B\cap C = \emptyset$.
If there is $\epsilon>0$ such that
\begin{equation}\label{eq:nocritico}
\forall \ x\in C \quad \exists \ \xi_{x}\in X:\quad 
\Phi^{0}(x;\xi_{x}-x)+\psi(\xi_{x})-\psi(x) < -\epsilon \|\xi_{x}-x\|,
\end{equation}
then, for every $k>1$ there exist $t_{0}\in\left(0,1\right]$, $\alpha\in C^{0}([0,1]\times X,X)$
and $\varphi\in C^{0}(X,\left[0,+\infty\right))$ which satisfy the following
\begin{enumerate}
\item
$\alpha(t,D_{\psi})\subseteq D_{\psi},\quad \forall \ t\in\left[0,t_{0}\right)$;
$\quad \alpha(t,x)=x,\quad \forall \ (t,x)\in\left[0,t_{0}\right)\times B $;
\item
$\|\alpha(t,x)-x\|\le kt,\quad \forall \ (t,x)\in\left[0,t_{0}\right)\times X$;
\item
$ I(\alpha(t,x))-I(x)\le-\epsilon\varphi(x)t,\quad \forall \ (t,x)\in\left[0,t_{0}\right)\times D_{I}$;
\item
$\varphi(x)=1,\quad \forall \ x\in C$.
\end{enumerate}
\end{Theorem}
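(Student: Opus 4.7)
The plan is to construct $\alpha$ and $\varphi$ by gluing together finitely many local descent directions obtained from \eqref{eq:nocritico} via a locally Lipschitz partition of unity on a small open neighborhood $W$ of $C$, with $\overline{W}\subset D_\Phi$ and $\overline{W}\cap B=\emptyset$ (such $W$ exists because $C$ is compact, $D_\Phi$ is open and $B$ is closed with $B\cap C=\emptyset$). This follows the non-smooth pseudo-gradient scheme of \cite{LiMa04}, with minor modifications to ensure that the trajectories stay inside $D_\Phi$. The first step is to localize the strict descent. For each $x\in C$ pick a witness $\xi_x$ of \eqref{eq:nocritico}, necessarily in $D_\psi$ (otherwise the left-hand side of \eqref{eq:nocritico} is $+\infty$). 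Using the upper semicontinuity of $\Phi^0(\cdot;\xi_x-\cdot)$ together with the continuity of $\psi$ on the compact set $C\subset D_I$ (from Assumption~\ref{ass:I}.2, since $I|_C$ is bounded and $\Phi$ is locally Lipschitz on $D_\Phi$) and the convexity of $\psi$, I would select an open neighborhood $U_x\subset D_\Phi$ of $x$ and $s_x>0$ such that
\[
\Phi(y+s(\xi_x-y))-\Phi(y)+s\bigl(\psi(\xi_x)-\psi(y)\bigr)\le -\tfrac{2\epsilon}{k+1}\,s\,\|\xi_x-y\|
\]
for every $y\in U_x$ and $s\in(0,s_x)$; the relaxation of the constant from $\epsilon$ to $\tfrac{2\epsilon}{k+1}$ leaves room for the averaging in the next step.

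Next, I would extract from $\{U_x\}_{x\in C}$ a finite subcover $U_1,\dots,U_n$, and replace each $\xi_i$ by a point on the segment $[x_i,\xi_i]$ (which preserves the localized descent above, by positive homogeneity of $\Phi^0$ in its second slot and by convexity of $\psi$), arranging that $\|\xi_i-x_i\|=\rho$ is independent of $i$. Choose a locally Lipschitz partition of unity $\{\rho_i\}$ subordinate to $\{U_i\}$ with $\operatorname{supp}\rho_i\subset W$, and a locally Lipschitz cutoff $\varphi:X\to[0,1]$ with $\varphi\equiv 1$ on $C$ and $\operatorname{supp}\varphi\subset W$. Setting $w(y)=\sum_i\rho_i(y)(\xi_i-y)$, $v(y)=y+w(y)$, and $M=\sup_{W}\|w\|+1$, define
\[
\alpha(t,x)=x+\tfrac{kt}{M}\,\varphi(x)\,w(x),\qquad t_0=\min\bigl(1,\tfrac{M}{k},\min_i s_i\bigr).
\]
Properties 2 and 4 are then immediate from the construction. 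For Property~1, $\varphi\equiv 0$ on $B$ forces $\alpha(t,\cdot)=\mathrm{id}$ on $B$, while for $x\in D_\psi$ the point $\alpha(t,x)=(1-\sigma)x+\sigma v(x)$ with $\sigma=\tfrac{kt\varphi(x)}{M}\in[0,1]$ is a convex combination of elements of $D_\psi$ (since $v(x)\in D_\psi$ by convexity), hence lies in $D_\psi$.

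The main obstacle is Property~3, the precise descent estimate. For $x\in D_I$ set $\sigma=\tfrac{kt\varphi(x)}{M}$, so that $\alpha(t,x)-x=\sigma\sum_i\rho_i(x)(\xi_i-x)$. Two applications of convexity of $\psi$ yield
\[
\psi(\alpha(t,x))-\psi(x)\le\sigma\sum_i\rho_i(x)\bigl(\psi(\xi_i)-\psi(x)\bigr).
\]
For the $\Phi$-half I would apply the localized estimate from the first step along each straight direction $\xi_i-x$ with $x\in U_i$, then compare the averaged endpoint $x+\sigma w(x)$ with the convex combination of the straight endpoints $\{x+\sigma(\xi_i-x)\}_i$ by means of the Lipschitz constant of $\Phi$ on $\overline{W}$ (finite, since $\overline{W}\subset D_\Phi$). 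This should produce
\[
\Phi(\alpha(t,x))-\Phi(x)\le\sigma\sum_i\rho_i(x)\bigl[\Phi^0(x_i;\xi_i-x_i)+\tfrac{\epsilon}{k+1}\|\xi_i-x_i\|\bigr]+o(\sigma).
\]
Summing the two halves and using \eqref{eq:nocritico} at each $x_i$ (converting $\psi(x_i)$ back to $\psi(x)$ with a small error via the continuity of $\psi$ on $C$, after shrinking the $U_i$ if necessary) gives a bound of order $-\tfrac{\epsilon\sigma\rho}{2}$, which becomes $I(\alpha(t,x))-I(x)\le -\epsilon\,\varphi(x)\,t$ once the constants are tuned so that $k\rho/(2M)\ge 1$. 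The delicate point—and the source of the assumption $k>1$—is precisely in balancing the displacement bound of Property~2 with the \emph{exact} constant $\epsilon$ of Property~3 as $k\to 1^+$, which forces one to shrink $W$ around $C$ at the outset so that $M$ is as close to $\rho$ as required. Assumption~\ref{ass:I}.4 plays no active role in this deformation lemma, since the entire trajectory of $\alpha$ is supported in $W\subset D_\Phi$ by construction; it becomes essential only when this lemma is iterated in the proof of Theorem~\ref{thm:minmax}, where one must guarantee that the deformed sets remain inside $D_\Phi$ along the min-max process.
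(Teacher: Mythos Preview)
Your approach is the same as the paper's: both defer to the construction in \cite[Theorem~2.2]{LiMa04}, and the only new point is that, since $C$ is compact and contained in the open set $D_\Phi$, the local Lipschitz continuity of $\Phi$ on $D_\Phi$ suffices to run the whole argument inside a neighborhood $W$ of $C$ with $\overline{W}\subset D_\Phi$. You also correctly note that Assumption~\ref{ass:I}.4 is not used in this lemma.

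One technical point should be fixed. You invoke the continuity of $\psi$ on the compact set $C$ (the second clause of Assumption~\ref{ass:I}.2) both to localize the descent inequality and later to ``convert $\psi(x_i)$ back to $\psi(x)$''. This is not quite applicable, because the points $y\in U_i$ where you need the estimate are not in $C$; and it is in any case unnecessary. The paper's route is cleaner: for each fixed $\xi_x$, the function
\[
\varUpsilon(z,w)=\Phi^{0}(z;\xi_{x}-w)+\psi(\xi_{x})-\psi(w)+\varepsilon\|\xi_{x}-w\|
\]
is upper semicontinuous on $D_\Phi\times X$, since $\Phi^0$ is u.s.c.\ and $-\psi$ is u.s.c.\ (the latter coming from the \emph{lower} semicontinuity of $\psi$ alone). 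Since $\varUpsilon(x,x)<0$ by \eqref{eq:nocritico}, the full descent inequality with the original constant $\varepsilon$ survives on a ball $B(x,\delta_x)\subset D_\Phi$; no preliminary weakening to $2\varepsilon/(k+1)$ and no subsequent comparison of $\psi(x)$ with $\psi(x_i)$ is needed. The paper explicitly remarks, right after its proof, that the continuity-on-compacts part of Assumption~\ref{ass:I}.2 is not used here. With this correction the partition-of-unity assembly goes through as you outline, using subadditivity and positive homogeneity of $\Phi^0(x;\cdot)$ rather than a Lipschitz comparison of endpoints.
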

\begin{proof}
First, we observe that \eqref{eq:nocritico} implies that, actually, $\xi_{x}\in D_{\psi}$ and 
$\xi_{x}\ne x$ for all $x\in C$.
Next, the proof follows the argument of \cite[Theorem 2.2]{LiMa04} which, in fact, develops in a
neighborhood of the compact set $C$.
It is enough to observe that, for each fixed $\xi_{x}$, the function $\varUpsilon : D_{\Phi}\times X\to \RR$ defined by
\[
\varUpsilon(z,w)=\Phi^{0}(z;\xi_{x}-w)+\psi(\xi_{x})-\psi(w)+\varepsilon\|\xi_{x}-w\|,\quad \forall \ (z,w)in D_{\Phi}\times X,
\]
is upper semicontinuous on $ D_{\Phi}\times X $ and,  by \eqref{eq:nocritico}, negative for
$(z,w)=(x,x)\in C\times C$.

\noindent
Therefore, for each $x\in C$ it is possible to find a positive $\delta_{x}<\|\xi_{x}-x\|$ such that
$\Phi$ is Lipschitz continuous in $ B(x,\delta_{x}) $ (see Assumption~\ref{ass:I}.3) and still
\[
\Phi^{0}(z;\xi_{x}-w)+\psi(\xi_{x})-\psi(w)+\varepsilon\|\xi_{x}-w\|<0,
\quad \forall \ (z,w)\in B(x,\delta_{x}).
\]
From here one can continue as in the proof of \cite[Theorem 2.2]{LiMa04}.
\end{proof}

\noindent
We now observe that we do not need the full extent of Assumption~\ref{ass:I} to prove Theorem~\ref{thm:deformation}.
In fact, Assumption~\ref{ass:I}.4 and the continuity of $\psi$ on the compact sets in which $\psi$ is
bounded, in Assumption~\ref{ass:I}.2, are not used in the proof above.

\noindent
Moreover, we remark that the third statement of the deformation theorem grants that
$ \alpha(t,D_{I}) \subseteq D_{I} $ for all $ t\in\left[0,t_{0}\right)$.
\bigskip

\noindent
Let us come now to the proof of Theorem~\ref{thm:minmax} and fix a few pieces of notation, in order to
avoid the singularity of the functional. 
Thanks to Assumption~\ref{ass:I}.4, there is $\delta_{0}>0$ such that $I(x)>c+1$ whenever
$x\in D_{I}$ and $\dist(x,\partial D_{\Phi})<2\delta_{0}$.
We set $D_{0} = \{x\in D_{\Phi}:\dist(x,\partial D_{\Phi})\ge 2\delta_{0}\}$: hence,
$x\in D_{0}$ whenever $I(x)\le c+1$.

\noindent
From now on, the argument is almost identical to that of \cite[Theorem~3.1]{LiMa04} and we only need
to check that it can be adjusted with the new assumptions on $\mathcal{F}$ and the singularity of $I$
(see Assumption~\ref{ass:I}.4).
It is enough to show that, if we fix any $\varepsilon$ and any $A_{\varepsilon}\in\mathcal{F}$
such that
\begin{equation}\label{eq:minseqA}
0<\varepsilon\le\min\{1,\delta_{0}\} \qquad \text{and} \qquad c\le \sup_{A_{\varepsilon}} I < c+\frac{\varepsilon^{2}}{8} < c+1,
\end{equation}
then there exists $x_{\varepsilon}\in X$ such that
\begin{align*}
& \Phi^{0}(x_{\varepsilon};z-x_{\varepsilon})+\psi(z)-\psi(x_{\varepsilon}) \ge -5\varepsilon\|z-x_{\varepsilon}\|, \quad \forall \ z\in X;                      \\
& \dist(x_{\varepsilon},A_{\varepsilon}) \le \frac{\varepsilon}{2};              \\
& c-\frac{\varepsilon^{2}}{8}\le I(x_{\varepsilon})\le c+\frac{5}{2}\varepsilon^{2};\\
& \dist(x_{\varepsilon},F)\le\frac{3}{2}\varepsilon \quad \text{if, moreover, } \inf_{F}I=c.
\end{align*}
In particular, \eqref{eq:minseqA} guarantess that $A_{\varepsilon}\subset D_{I}\cap D_{0}$.

\noindent
Assumptions 2 and 3 of the theorem imply that $\inf_{F}I\le c$ and, thus, we split the argument into two cases.
\subsection*{The first case: $\mathbf{\operatornamewithlimits{\mathbf{inf}}_{F} I = c}$.}
We denote $N_{\varepsilon}(F)=\{x\in X:\dist(x,F)<\varepsilon\}$ and
$G_{\varepsilon}=(\{0\}\times X)\cup([0,1]\times(( A_{\varepsilon}\setminus N_{\epsilon}(F))\cup B))$.

\noindent
Let $\overline{\eta}(t,x)=x$ for all $(t,x)\in[0,1]\times X$ and
\[
\mathcal{L}_{\varepsilon} \vcentcolon= \left\{ \eta\in C^{0}([0,1]\times X,X) :
\eta(t,x)=x \; \forall (t,x)\in G_{\varepsilon} \text{ and }
\rho(\eta,\overline{\eta}) \le \delta_{0} \right\},
\]
which is a complete metric space with respect to the distance $\rho(\eta_{1},\eta_{2})\vcentcolon=
\sup\{\|\eta_{1}(t,x)-\eta_{2}(t,x)\|:(t,x)\in[0,1]\times X\}$.
Observe that, by construction, $\eta([0,1]\times A_{\varepsilon})\subset D_{\Phi}$ for all
$\eta\in\mathcal{L}_{\varepsilon}$, that $(\{0\}\times X)\cup([0,1]\times B)\subset G_{\varepsilon}$
and, thus, $\eta(1,A_{\varepsilon})\in\mathcal{F}$ by hypothesis 1 of the theorem.
We define for $x\in X$:
\begin{align*}
f_{1}(x) & = \max\{0,\varepsilon^{2}-\varepsilon\dist(x,F)\} \\
f_{2}(x) & = \min\left\{\frac{\varepsilon^{2}}{8},\varepsilon\dist(x,(A_{\varepsilon}\setminus N_{\varepsilon}(F))\cup B)\right\} \\
g(x) & = I(x)+f_{1}(x)+f_{2}(x)
\end{align*}
and consider the functional $J:\mathcal{L}_{\varepsilon}\to\RR\cup\{+\infty\}$ given by
\[
J(\eta)=\sup_{z\in\eta(1,A_{\varepsilon})} g(z) = \sup_{x\in A_{\varepsilon}} g(\eta(1,x)),\quad \forall \ \eta \in \mathcal{L}_{\varepsilon}.
\]
Let us observe that $J$ is l.s.c. since it is the supremum of a family of l.s.c. functions.

\noindent
For every $\eta\in\mathcal{L}_{\varepsilon}$, there exists $z_{0}\in \eta(1, A_{\varepsilon}) \cap F$
by assumption 3, since $\eta(1,A_{\varepsilon})\in\mathcal{F}$, and, thus
\[
J(\eta)\ge g(z_{0}) \ge I(z_{0})+f_{1}(z_{0}) = I(z_{0}) + \varepsilon^{2}
\ge \inf_{F} I+\varepsilon^{2} = c+\varepsilon^{2}.
\]
Hence,
\[
c+\varepsilon^{2}\le\inf_{\eta\in\mathcal{L}_{\varepsilon}} J(\eta).
\]
On the other hand, by \eqref{eq:minseqA} we have:
\begin{equation}\label{eq:catena}
\inf_{\eta\in\mathcal{L}_{\varepsilon}} J(\eta) \le J(\overline{\eta}) =
\sup_{A_{\varepsilon}}(I+f_{1}+f_{2}) \le c +  \frac{\varepsilon^{2}}{8} + \varepsilon^{2} 
+ \frac{\varepsilon^{2}}{8} \le
\inf_{\eta\in\mathcal{L}_{\varepsilon}} J(\eta)+\frac{\varepsilon^{2}}{4}.
\end{equation}
By Ekeland's variational principle, there exists $\eta_{0}\in\mathcal{L}_{\varepsilon}$ such that
\begin{equation}\label{eq:EkVarPrinc}
J(\eta_{0})\le J(\overline{\eta}); \quad
\rho(\eta_{0},\overline{\eta})\le\frac{\varepsilon}{2}; \quad
J(\eta)\ge J(\eta_{0})-\frac{\varepsilon}{2}\rho(\eta,\eta_{0}),\;
\forall\eta\in\mathcal{L}_{\varepsilon}.
\end{equation}
We have that $\psi$ is bounded from above on $\eta_{0}(1,A_{\varepsilon})$, since
\[
\sup_{\eta_{0}(1,A_{\varepsilon})} \psi
\le \sup_{\eta_{0}(1,A_{\varepsilon})} I - \inf_{\eta_{0}(1,A_{\varepsilon})} \Phi
\le J(\eta_{0}) - \min_{\eta_{0}(1,A_{\varepsilon})} \Phi
\le J(\overline{\eta}) - \min_{\eta_{0}(1,A_{\varepsilon})} \Phi,
\]
and, thus, $\psi$ is continuous on $\eta_{0}(1,A_{\varepsilon})$ by Assumption~\ref{ass:I}.2.
In particular, the function $ x\mapsto g(\eta_{0}(1,x)) $ is continuous on $A_{\varepsilon}$ and
the set $C=\{ w\in\eta_{0}(1,A_{\varepsilon}):g(w)=\sup g(\eta_{0}(1,A_{\varepsilon})) \}$
is non-empty and compact.

\noindent
Now, let $\hat{z}\in\eta_{0}(1,A_{\varepsilon})\cap F$ satisfy
$ I(\hat{z}) = \sup I(\eta_{0}(1,A_{\varepsilon})\cap F) $: either $\hat{z}\not\in B$ or
we have
\[
I(\hat{z}) \le \sup_{B} I \le \inf_{F} I \le I(z) \le \sup _{\eta_{0}(1,A_{\varepsilon})\cap F} I,
\quad \forall \ z\in (\eta_{0}(1,A_{\varepsilon})\cap F) \setminus B.
\]
In both cases there is $z_{0}\in (\eta_{0}(1,A_{\varepsilon})\cap F) \setminus B$ such that
$I(z_{0})=\sup I(\eta_{0}(1,A_{\varepsilon})\cap F) $ and, thus,
\[
\max_{\eta_{0}(1,A_{\varepsilon})} g \ge g(z_{0}) = I(z_{0}) + \varepsilon^{2} + f_{2}(z_{0})
\ge c + \varepsilon^{2} + f_{2}(z_{0})> c + \varepsilon^{2},
\]
where we used the fact that $f_{2}(z_{0})>0$ and $ I(z_{0})\ge c$, since $z_{0}\in F$.

\noindent
On the other hand, we have that:
\begin{align*}
z\in A_{\varepsilon}\setminus N_{\varepsilon}(F) & \implies  g(z)=I(z) \le \sup_{A_{\varepsilon}} I< c+ \frac{{\varepsilon}^{2}}{8}; \\
z\in B &\implies  g(z)=I(z)+f_{1}(z) \le \sup_{B} I + \varepsilon^{2} \le c+\varepsilon^{2}.
\end{align*}
Hence, $ C \cap [(A_{\varepsilon}\setminus N_{\varepsilon}(F))\cup B] = \emptyset$.

\noindent
We make the following
\begin{description}
\item[Claim.] There exists $x_{\varepsilon}\in C $ such that:
$ \Phi^{0}(x_{\varepsilon};z-x_{\varepsilon}) + \psi(z) - \psi(x_{\varepsilon}) \ge
-5\varepsilon \|z-x_{\varepsilon}\|,\quad \forall \ z\in X$.
\end{description}
Assuming that the claim is proven, we have that $x_{\varepsilon}\not\in B$.
Moreover, $x_{\varepsilon}=\eta_{0}(1,\hat{x})$ for some $\hat{x}\in A_{\varepsilon}$:
in fact, $\hat{x} \in N_{\varepsilon}(F)$ because, otherwise, we would have
$\hat{x}\in A_{\varepsilon}\setminus N_{\varepsilon}(F)$ and, thus,
$x_{\varepsilon}=\hat{x}$, since $\eta_{0}\in\mathcal{L}_{\varepsilon}$, that is a contradiction
with $ C \cap [(A_{\varepsilon}\setminus N_{\varepsilon}(F))\cup B] = \emptyset$.
Therefore, we deduce that
\begin{align*}
\dist(x_{\varepsilon},A_{\varepsilon}) &\le \|\eta_{0}(1,\hat{x})-\hat{x}\|
\le \rho(\eta_{0},\overline{\eta})\le \frac{\varepsilon}{2}; \\
\dist(x_{\varepsilon},F) &\le \|\eta_{0}(1,\hat{x})-\overline{\eta}(1,\hat{x})\| +
\dist(\hat{x},F) \le \rho(\eta_{0},\overline{\eta})+\frac{\varepsilon}{2} \le \frac{3}{2}\varepsilon.
\end{align*}
Finally, from \eqref{eq:catena} we infer
\[
I(x_{\varepsilon}) \le g(x_{\varepsilon}) = J(\eta_{0}) \le J(\overline{\eta})
= \sup_{A_{\varepsilon}} g \le c + \frac{5}{4}\varepsilon^{2}
\]
and
\[
I(x_{\varepsilon}) = J(\eta_{0})-f_{1}(x_{\varepsilon})-f_{2}(x_{\varepsilon})
\ge \inf_{\mathcal{L}_{\varepsilon}} J -\varepsilon^{2}-\frac{\varepsilon^{2}}{8}
\ge c - \dfrac{\varepsilon^{2}}{8}.
\]
\bigskip

\noindent
Let us finally prove the remaining claim.
By contradition, if the claim is false, we have that \eqref{eq:nocritico} holds with $5\varepsilon$
in place of $\varepsilon$ and we can apply Theorem~\ref{thm:deformation} with
$B'=(A_{\varepsilon}\setminus N_{\varepsilon}(F))\cup B$ in place of $B$
and any $k\in(1,2)$.
We fix any $t_{1}\in(0,t_{0})$ such that $t_{1}\le \varepsilon/(2k)$, too, and 
define
\[
\eta_{\lambda}(t,x)=\alpha(\lambda t,\eta_{0}(t,x)),\quad \forall \ (t,x)\in[0,1]\times X, \ \forall \ \lambda\in[0,t_{1}].
\]
Using statements 1 and 2 of Theorem~\ref{thm:deformation} it is straightforward to check that
$\eta_{\lambda}(t,x)=x$ for all $(t,x)\in G_{\varepsilon}$ and all $\lambda\in[0,t_{1}]$.
Moreover, we have
\[
\|\eta_{\lambda}(t,x)-x\| \le \|\alpha(\lambda t,\eta_{0}(t,x))-\eta_{0}(t,x)\|
+\|\eta_{0}(t,x)-x\| \le k t_{1}+\frac{\varepsilon}{2} \le \delta_{1},
\quad \forall \ (t,x)\in[0,1]\times X, \forall \ \lambda\in[0,t_{1}],
\]
again by statement 2 of Theorem~\ref{thm:deformation} and the choiches for $t_{1}$ and $\epsilon$.

\noindent
Therefore we can deduce that $\eta_{\lambda}\in\mathcal{L}_{\varepsilon}$ and that
 $\rho(\eta_{\lambda},\eta_{0})\le \lambda k $, for every $\lambda\in[0,t_{1}]$.
 
\noindent
Hence, by \eqref{eq:EkVarPrinc} we have that
\begin{equation}\label{eq:36}
J(\eta_{\lambda}) \ge J(\eta_{0}) - \frac{e}{2}\rho(\eta_{\lambda},\eta_{0}) \ge
J(\eta_{0})-\frac{\varepsilon\lambda k}{2} \qquad \forall \lambda\in[0,t_{1}].
\end{equation}
By statement 3 in Theorem~\ref{thm:deformation}, we deduce that
\[
I(\eta_{\lambda}(1,x))\le I(\eta_{0}(1,x))-5\varepsilon\lambda\varphi(\eta_{0}(1,x)),
\quad \forall \ x\in A_{\varepsilon},\ \forall \ \lambda\in[0,t_{1}],
\]
and, thus,
\[
\sup_{\eta_{\lambda}(1,A_{\varepsilon})} I \le J(\eta_{0}) - 5\varepsilon\lambda\min_{\eta_{0}(1,A_{\varepsilon})} \varphi<+\infty.
\]
Arguing as above, this guarantees that $\psi$ is continuous on $\eta_{\lambda}(1,A_{\varepsilon})$
and that also the function $x\mapsto g(\eta_{\lambda}(1,x))$ is continuous on $A_{\varepsilon}$:
let $x_{\lambda}\in A_{\varepsilon}$ be such that $g(\eta_{\lambda}(1,x_{\lambda}))=J(\eta_{\lambda})$.
Inequality \eqref{eq:36} implies that
\begin{equation}\label{eq:37}
g(\eta_{\lambda}(1,x_{\lambda}))-g(\eta_{0}(1,x))\ge J(\eta_{0}) - g(\eta_{0}(1,x)) -
\frac{\varepsilon\lambda k}{2} \ge -\frac{\varepsilon\lambda k}{2},
\quad \forall \ x\in A_{\varepsilon},\ \forall \ \lambda\in[0,t_{1}].
\end{equation}
Now, recalling that the functions $f_{1}$ and $f_{2}$ have both Lipschitz constant $\varepsilon$,
we deduce that
\[
I(\eta_{\lambda}(1,x_{\lambda}))-I(\eta_{0}(1,x_{\lambda})) \ge g(\eta_{\lambda}(1,x_{\lambda}))-g(\eta_{0}(1,x_{\lambda})) -2\varepsilon \|\eta_{\lambda}(1,x_{\lambda})-\eta_{0}(1,x_{\lambda})\| \\
\ge-\frac{5}{2}\varepsilon\lambda k,
\]
for all $\lambda\in[0,t_{1}]$.
Moreover, again by statement 2 of Theorem~\ref{thm:deformation}, we have that
\[
I(\eta_{\lambda}(1,x_{\lambda}))-I(\eta_{0}(1,x_{\lambda})) \le
-5\varepsilon\lambda \varphi(\eta_{0}(1,x_{\lambda})).
\]
The last two inequalities together give that
\begin{equation}\label{eq:39}
\varphi(\eta_{0}(1,x_{\lambda})) \le \frac{k}{2} < 1.
\end{equation}
On the other hand, if we denote by $\hat{x}\in A_{\varepsilon}$ any cluster point of
$\{x_{\lambda}:\lambda\in[0,t_{1}]\}$ as $\lambda\to 0^{+}$, we observe that, again, the map
$(\lambda,x)\mapsto g(\eta_{\lambda}(1,x)) $ is continuous on the closure of
$\{(\lambda,x_{\lambda}):\lambda\in[0,t_{1}]\}\subset[0,t_{1}]\times A_{\varepsilon}$
and we let $\lambda\to 0^{+}$ in \eqref{eq:37} to obtain:
\[
g(\eta_{0}(1,\hat{x})) - g(\eta_{0}(1,x)) \ge 0,\quad \forall \ x \in A_{\varepsilon};
\]
this means that, actually, $\eta_{0}(1,\hat{x})\in C $ and, thus, $\varphi(\eta_{0}(1,\hat{x}))=1$
by statement 4 of Theorem~\ref{thm:deformation}.
This, however, contradicts \eqref{eq:39} and the claim is proved.
\subsection*{The second case: $\mathbf{\operatornamewithlimits{\mathbf{inf}}_{F} I < c}$.}
The argument in this case can be simplified since we do not seek a PS-sequence that
approaches $F$ anymore.
We set
\[
\mathcal{L}=\{\eta\in C([0,1]\times X,X) :\ \eta(t,x)=x,\quad \forall \ (t,x)\in(\{0\}\times X)\cup([0,1]\times B);\quad 
\rho(\eta,\overline{\eta})\le\delta_{0}\}.
\]
Again, we have that $(\mathcal{L},\rho)$ is a complete metric space and $\eta(1,A_{\varepsilon})\in\mathcal{F}$ for all $\eta\in\mathcal{L}$.
The functional $J:\mathcal{L}\to\left(-\infty,+\infty\right]$, given by
$J(\eta)=\sup f(\eta(1,A_{\varepsilon}))$, is l.s.c. and satisfies
\[
c\le\inf_{\mathcal{L}} J \le J(\overline{\eta})< c+\frac{\varepsilon^{2}}{4}
\le \inf_{\mathcal{L}} J +\frac{\varepsilon^{2}}{4}.
\]
Thus, there exists $\eta_{0}\in\mathcal{L}$ such that \eqref{eq:EkVarPrinc} holds
with $\mathcal{L}$ in place of $\mathcal{L}_{\varepsilon}$.
As in the previous case, $\psi$ is bounded from above on $\eta_{0}(1,A_{\varepsilon})$ and, using Assumption~\ref{ass:I}.2, we can consider
the nonempty and compact set $C=\{w\in\eta_{0}(1,A_{\varepsilon}): f(w)=\max I(\eta_{0}(1,A_{\varepsilon}))\}$.
Moreover, $B\cap C=\emptyset$ since
\[
\sup_{B} I \le \inf_{B} I <c\le\inf_{C} I
\]
and we can use again Theorem~\ref{thm:deformation} to show that there exists 
$x_{\varepsilon}\in C$ such that
\[
\Phi^{0}(x_{\varepsilon},z-x_{\varepsilon}) + \psi(z) - \psi(x_{\varepsilon}) \ge
-\varepsilon \|z-x_{\varepsilon}\|,\quad \forall  \ z\in X.
\]
The first inequality in \eqref{eq:EkVarPrinc} ensures that
\[
c\le \inf_{C} I \le I(x_{\varepsilon}) =J(\eta_{0})\le J(\overline{\eta}) <c+\frac{\varepsilon^{2}}{4},
\]
while the second one implies that $\dist(x_{\varepsilon},A_{\varepsilon})\le \varepsilon/2$.
\medbreak
\medbreak
\medbreak
\noindent
\textbf{Acknowledgments.} The authors wish to thank Susanna Terracini for some fruitful conversations and for her encouragment to pursue this research.

\end{document}